\def\cS{{\mathcal S}}
\def\cC{{\mathcal C}}
\def\cL{{\mathcal L}}
\def\cP{\mathcal{P}}
\def\f{\phi}
\def\cE{\mathcal{E}}
\def\cL{\mathcal{L}}
\def\cT{\mathcal{T}}
\renewcommand{\Pr}{\operatorname{\bf Pr}}
\newcommand\bfrac[2]{\left(\frac{#1}{#2}\right)}
\newtheorem{thm}{Theorem}
\newtheorem{theorem}{Theorem}[section]
\newtheorem{lemma}[theorem]{Lemma}
\newtheorem{observation}[theorem]{Observation}
\newtheorem{notation}[theorem]{Notation}
\begin{document}
\title{A note on long cycles in sparse random
graphs}
\author{Michael Anastos}
\address{M.\ Anastos\hfill\break
Institute of Science and Technology Austria \\Klosterneurburg 3400, Austria.}
\email{michael.anastos@ist.ac.at}

\thanks{ This project has received funding from the European Union's Horizon 2020 research and innovation
programme under the Marie Sk\l{}odowska-Curie grant agreement No 101034413
\includegraphics[width=5.5mm, height=4mm]{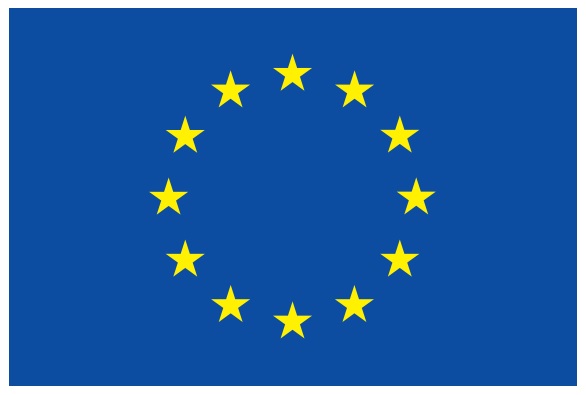}.
}

\maketitle
\begin{abstract}
Let $L_{c,n}$ denote the size of the longest cycle in $G(n,{c}/{n})$, $c>1$ constant. We show that there exists a continuous function $f(c)$ such that $ L_{c,n}/n \to f(c)$ a.s. for $c\geq 20$, thus extending a result of Frieze and the author to smaller values of $c$. Thereafter, for $c\geq 20$, we determine the limit of the probability that $G(n,c/n)$ contains cycles of every length between the length of its  shortest and its longest cycles as $n\to \infty$. 
\end{abstract}
\section{Introduction}
Let $L_{c,n}$ denote the size of the longest cycle in $G(n,p)$, $p={c}/{n}$ i.e. the random graph on $[n]$ where each edge appears independently with probability $p$. Erd\H{o}s \cite{erdos1975problems} conjectured that if $c>1$ then w.h.p.\footnote{We say that a sequence of events $\{\mathcal{E}_n\}_{n\geq 1}$
holds {\em{with high probability}} (w.h.p.\@ in short) if $\lim_{n \to \infty}\Pr(\mathcal{E}_n)=1-o(1)$.}  $L_{c,n}\geq \ell(c)n$ where $\ell(c)>0$ is independent of $n$. This was proved by Ajtai, Koml\'os and Szemer\'edi \cite{ajtai1981longest} and in a slightly weaker form by  Fernandez de la Vega \cite{de1979long} who proved that the conjecture is true for $c>4\log 2$ ($\log $ is in base $e$). Although this answers Erd\H{o}s's question and provides the order of magnitude of $L_{c,n}$ for $c>1$ it leaves open the question of providing matching upper and lower bounds on $L_{c,n}$ up to the linear in $n$ order term.  Bollob\'as \cite{bollobas1982long} realized that for large $c$ one could find a large path/cycle w.h.p. by concentrating on a large subgraph with large minimum degree and demonstrating Hamiltonicity.
In this way he showed that $L_{c,n}\geq (1-c^{24}e^{-c/2})n$ w.h.p. This was then improved by Bollob\'as, Fenner and Frieze \cite{bollobas1984long} to $L_{c,n}\geq (1-c^6e^{-c})n$  and then by Frieze \cite{frieze1986large} to $L_{c,n}\geq ( 1-(1+\epsilon_c)(1+c)e^{-c})n$ w.h.p. where $\epsilon_c\to0$ as $c\to\infty$. This last result is optimal up to the value of $\epsilon_c$, as there are  $(1+c)e^{-c}n+o(n)$ vertices of degree 0 or 1 w.h.p. Finally the scaling limit of $L_{c,n}$ was determined by Anastos and Frieze \cite{anastos2021scaling} for sufficiently large $c$.
They  showed that there exists some absolute constant $C_0>1$ and a function $f(\cdot)$ such that for $c\geq C_0$, $L_{c,n}/n\to f(c)$ a.s. In addition they gave a way of computing  $L_{c,n}$ within arbitrary accuracy.  
They also proved analogous results for the longest direct cycle in sparse random digraphs \cite{anastos2022scaling}.

Denote by $L_{c,n}^P$ the length of the longest path in $G(n,p)$. In addition for a graph $G$ denote by $L(G)$ the size of the longest cycle of $G$. The main theorem of this paper is the following one. 
\begin{theorem}\label{thm:main}
Let $G\sim G(n,c/n)$.   
\begin{itemize}
    \item[(a)] There exists a continuous function $f:[0,\infty) \to [0,1]$ such that $L_{c,n}/{n} \to f(c)$ almost surely for $c\geq 20$, constant.
    \item[(b)] W.h.p. $G$ has a cycle of length $L(G)-i$ for $0\leq i\leq 0.1{c^3e^{-c}n}$, for $20\leq c \leq 0.4\log n$.
    \item[(c)] W.h.p. $|L_{c,n}-L_{c,n}^P|\leq  ({2000\log n})/{c}+1$ for $20\leq c \leq 0.4\log n$.
\end{itemize}
\end{theorem}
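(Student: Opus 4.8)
The plan is to treat the three parts in the order (a), (c), (b), since (c) and (b) both build on structural facts established while proving (a). For part (a), I would follow the overall strategy of Anastos and Frieze~\cite{anastos2021scaling}, whose argument already yields a function $f$ together with the almost sure convergence for all sufficiently large $c$; the work is to push the required estimates down to $c\ge 20$. Two things are needed: concentration of $L_{c,n}$ around its mean, and convergence of $\mathbb{E}[L_{c,n}]/n$. Concentration is delicate in this sparse regime --- a single edge change can alter $L(G)$ drastically, so a naive bounded-differences or Talagrand estimate is unavailable --- and I would obtain it as in~\cite{anastos2021scaling}, sandwiching $L_{c,n}$ between a concentrated lower bound (the length of the cycle output by a randomized rotation--extension procedure run on the $2$-core) and a concentrated, matching-up-to-$o(n)$ upper bound (from the count of vertices that provably cannot lie on a near-maximum cycle). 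Convergence of the mean, and continuity of $f$, I would get from coupling/sprinkling comparisons of $G(n,c/n)$ with $G(m,c/m)$ and with $G(n,c'/n)$, showing the relevant sequences are approximately Cauchy. The genuinely new input is quantitative: the analysis of the cycle-building procedure in~\cite{anastos2021scaling} loses large constant factors, forcing $c$ to be huge, whereas for $c\ge 20$ one can run a tighter analysis, exploiting that the $2$-core of $G(n,c/n)$ and its kernel (degree-$2$ vertices suppressed) are strong expanders away from small tree-like or low-degree substructures --- and such substructures of size $t$ have $O(n)$ locations but probability $(ce^{-c})^{\Theta(t)}$, hence size $O(\log n/c)$ w.h.p.

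For part (c), the easy direction $L_{c,n}\le L_{c,n}^P$ (up to an additive $1$) is immediate, so the content is that some longest path can be turned into a cycle after deleting only $O(\log n/c)$ vertices. Let $P$ be a longest path of $G$; since $P$ is longest, every neighbor of each of its endpoints lies on $P$. At each end, peel off the hanging tree it enters and then the bare path (chain of degree-$2$ vertices of the $2$-core) it runs along --- each of size $O(\log n/c)$ w.h.p.\ for $20\le c\le 0.4\log n$ --- to reach a vertex $u$ of degree $\ge 3$ inside the expanding part of the $2$-core, on the path $P'$. Now run the standard P\'osa argument inside the $2$-core: rotating one end of $P'$ builds a set $R$ of reachable endpoints with $|N(R)|=O(|R|)$, so by expansion either $|R|$ is linear or the rotations stay confined to an $O(\log n/c)$-size substructure; in the first case, rotating the other end as well and invoking the booster argument yields an edge between two reachable endpoints and closes a cycle on all of $V(P')$, and in the second case one loses only $O(\log n/c)$ further vertices. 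Summing the losses gives $L_{c,n}\ge L_{c,n}^P-(2000\log n/c+1)$.

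For part (b), I would fix a longest cycle $C$ of $G$, $|C|=L$, and show every length in $[L-0.1c^3e^{-c}n,\,L]$ is attained. Deleting an edge of $C$ turns $V(C)$ into a Hamilton path, and, exactly as in (c), rotating its ends inside the expanding graph induced on $V(C)$ together with the $\Theta(n)$ chords of $C$ produces a linear-sized family of Hamilton paths of $V(C)$, and of long cycles obtained by joining an endpoint to an earlier vertex along a rotated path. The degree-$3$ vertices provide the flexibility: w.h.p.\ $C$ carries about $\tfrac{c^3e^{-c}}{6}n$ of them, a large constant fraction of which are usable (third neighbor on $C$, no two interfering), and activating a chosen sub-collection of them via rotations/reroutes shortens the current cycle by a controlled total amount; arguing that every target in the window is realizable by some choice, the ceiling $0.1c^3e^{-c}n$ being essentially the supply of usable configurations. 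Together with the standard facts that $G(n,c/n)$ contains cycles of all short lengths down to its girth, and a separate argument covering the intermediate range, this gives the weak-pancyclicity statement advertised in the abstract.

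The main obstacle, common to all three parts, is the tightness of the expansion analysis at $c=20$: there is far less slack than in~\cite{anastos2021scaling}, so every use of ``rotations expand'' must be coupled with a precise classification of the small non-expanding substructures of $G(n,c/n)$ --- hanging trees, bare paths, low-degree pockets --- a proof that they have size $O(\log n/c)$ w.h.p., and a proof that they are the only obstructions to expansion up to a small linear scale. In (a) this is what upgrades a lossy argument to one valid already at $c=20$; in (c) it is precisely the source of the $\log n/c$ term; and in (b) it controls both where the reservoir can be placed and how many usable configurations survive, hence the $c^3e^{-c}n$ scale and the constant $0.1$. A secondary technical point in (c) and (b) is that deleting an edge or peeling a bare path destroys the ``longest path'' hypothesis that underlies P\'osa's lemma, so the non-extendability needed for the rotation arguments must be re-established by a careful argument inside the $2$-core (or its kernel), where it continues to hold.
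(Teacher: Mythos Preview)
Your plan for (a) has a genuine gap: you cannot reach $c\ge 20$ by ``running a tighter analysis'' of the argument in~\cite{anastos2021scaling}. That argument finds its Hamilton cycle via the Fenner--Frieze colouring method applied to a contracted graph of minimum degree~$3$, and the calculations there genuinely force $c$ to be enormous (the paper quotes $c\ge 10^6$). The present paper's improvement is structural, not analytic: it replaces the \emph{strong $3$-core} of~\cite{anastos2021scaling} by a \emph{strong $4$-core}. Concretely, one iteratively colours red any black-or-blue vertex with fewer than four black neighbours, and blue its remaining black neighbours; the black set $V_{black}$ is then the maximal $S$ such that every vertex of $S\cup N(S)$ has at least four neighbours in $S$. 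The point of going from $3$ to $4$ is that, after contracting the red/blue components to a matching $M$ on the blue vertices, every vertex has at least four neighbours in $V_{black}$. This is precisely what allows a \emph{different} Hamiltonicity proof (Theorem~\ref{thm:ham}): two applications of Tutte--Berge produce a $2$-matching covering all vertices and containing $M$, with only $O(n/\log^{1/2}n)$ path-components, and then P\'osa rotations plus sprinkled edges merge these into a Hamilton cycle through $M$. Neither your sandwiching scheme nor the $2$-core/kernel expansion language captures this; without the degree-$4$ structure there is no reason your rotation--extension lower bound should match the obstruction upper bound to within $o(n)$ already at $c=20$.

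Once the strong-$4$-core characterisation is available, (b) and (c) become almost one-line consequences, in contrast to your separate rotation arguments. The paper proves (Theorem~\ref{basic}) that w.h.p.\ $L(G)=n-\sum_T\phi(T,\gamma^*)$ \emph{exactly}, where the sum is over red/blue components $T$ and $\phi(T,\gamma^*)$ is the minimum number of red vertices of $T$ missed by any system of blue-ended vertex-disjoint paths in $T$. For (c): any path $P$ of $G$, restricted to a red/blue component that it both enters and leaves, is a blue-ended path; so the only slack over the cycle bound comes from the first and last red/blue component $P$ meets, giving $L_{c,n}^P\le L(G)+2r(G)$ with $r(G)$ the maximum red/blue component size, and $r(G)\le 1000\log n/c$ w.h.p.\ by a direct first-moment bound. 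No rotations are needed. For (b): in an optimal path-system there are at least $0.1c^3e^{-c}n$ paths covering exactly one red vertex (essentially the degree-$3$ vertices of $G$); deleting any $\ell$ of these from the system and re-invoking Theorem~\ref{thm:ham} with the modified matching yields a cycle of length exactly $L(G)-\ell$. Your rotation-based plans for (b) and (c) are not obviously unworkable, but they carry the burden of re-establishing non-extendability after each modification (which you yourself flag), and for (b) you have not explained how to hit \emph{every} length in the window rather than merely a dense subset. The paper's structural route sidesteps both issues.
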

We discuss the case $c\geq 0.4\log n$ shortly. Part (a) of Theorem \ref{thm:main}, except for the continuity of $f$, is proven in \cite{anastos2021scaling} for sufficiently large $c$. The proof there relies on identifying a subgraph $H_3$ of $G$ and showing that after contracting every maximal path whose interior vertices are of degree 2 into a single edge we get a graph $H_3'$ of minimum degree 3 with the property that it has a Hamilton cycle that passes through all the ``new'' edges. To find the Hamilton cycle there a version of the coloring argument of Fenner and Frieze \cite{fenner1984hamiltonian} is used. For the corresponding calculations the condition $c\geq 10^6$ is asserted. Our improvement on $c$ comes from considering a subgraph $H_4$ of $G$ in place of $H_3$. $H_4$ is constructed in a similar manner as $H_3$. The alterations done to its construction are such that $H_4'$, the graph obtained after contracting every maximal path  whose interior vertices are of degree 2  into an edge, has minimum degree 4. This enables us to use a different argument to find a suitable Hamilton cycle in $H_4'$ and thus extend the range of $c$ for which part (a) of Theorem \ref{thm:main} is true to $c\geq 20$. 

A graph $G$ is called pancyclic if it contains a cycle of length $\ell$ for every $\ell \in [3,|V(G)|]$. The study of pancyclic graphs was initiated by Bondy \cite{bondy1971pancyclic}.  Cooper and Frieze \cite{cooper1987pancyclic} proved that the threshold for $G(n,p)$ being pancyclic is the same as being hamiltonian which is $p_H=({\log n +\log\log n})/{n}$. Their methods can be extended to prove that the probability of the 2-core of $G(n,p)$ being pancyclic is the same as being hamiltonian which is $1-o(1)$ for $p \geq {(1+\epsilon)\log n}/{3n}$ for any constant $\epsilon>0$. Thus for $c\geq 0.4\log n$ the size of the longest cycle in $G(n,p)$ equals to the size of its $2$-core, say $n'$, and there exists a cycle of length $\ell$ in $G$ for every  $\ell\in [3,n']$ w.h.p. 

For a fixed set $S\subset \mathbb{N}\setminus \{1,2\}$ the probability that $G(n,c/n)$ contains a cycle of length $l$ for $l\in S$  is given by a result of Bollob\'as (see \cite{bollobas2001random}, §4.1) and separately by a result of Karo\'nski and Ruci\'nski \cite{karonski1983number}. For $l\geq 3$ let $Z_{c,l}$ be the number of cycles of length $l$ in $G(n,c/n)$. They  proved that for every finite set $S\subset \mathbb{N}\setminus \{1,2\}$ the joint distribution of $\{Z_{c,l}:l\in S\}$ converges in distribution to the joint distribution of $\{Poisson({c^l}/{2l}):l\in S\}$. Recently, Alon, Krivelevich and Lubetzky \cite{alon2021cycle} studied the set of cycle lengths of randomly augmented graphs and  showed that if one sprinkles $\epsilon n$ random edges on top of some graph $G$ on $[n]$ then, in addition to $L(G)$, the new graph w.h.p. contains a cycle of length $\ell$ for every $\ell$ such that both $\ell, L(G)-\ell$ tend to infinity with $n$. For graphs $G,F$ on the same vertex set denote by $G \oplus F$ the graph $(V(G),E(G)\cup E(F))$.
\begin{thm}[Theorem 3.1 of \cite{alon2021cycle}]\label{thm:AKL}
Fix $\delta>0$, let $H$ be a graph on $[n]$ with a longest cycle of size $L(H)$, $F \sim  G(n, {\delta}/{n})$ and $G=H \oplus F$. There exist absolute constants $C_1,C_2>0$ such that, for any $3\leq \ell \leq |V(H)|/2$, we have that 
 $G$ contains a cycle of length $l$ for every $l\in [\ell,L(H)-\ell+4]$ with probability at least $1-C_1e^{-C_2(\delta^2\wedge 1)\ell}$.
\end{thm}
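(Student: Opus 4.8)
The plan is to realise every target length by cutting a fixed longest cycle of $H$ with a single random edge (coarse adjustment) and then trimming the resulting cycle by a controlled amount using short local $F$-configurations (fine adjustment). Fix a longest cycle $C=(v_0,\dots,v_{L-1})$ of $H$, with indices taken modulo $L:=L(H)$. If $L<2\ell-4$ then $[\ell,L-\ell+4]=\varnothing$ and there is nothing to prove, so assume $L\ge 2\ell-4$.

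For the coarse step, observe that any chord $\{v_i,v_j\}\in E(F)$ with $j-i=s$ yields, together with one of the two arcs of $C$ between its ends, a cycle of length $s+1$ or of length $L-s+1$; so a chord whose span lies in a window of $w$ consecutive values realises a prescribed target length up to an additive error $w$. For a fixed such window the number of candidate chords of $C$ is of order $wL$, each present in $F$ independently with probability $\delta/n$, so no such chord appears with probability at most $e^{-\Omega(w\delta L/n)}$. For the fine step, call a vertex $v_i$ of $C$ \emph{good} if $F$ supplies a short detour around the sub-path $v_{i-1}v_iv_{i+1}$ — e.g. either $\{v_{i-1},v_{i+1}\}\in E(F)$, or $v_{i-1},v_{i+1}$ have a common $F$-neighbour off $C$, or a nearby vertex of $C$ carries a suitable extra $F$-edge — arranged so that a run of $r$ good vertices inside an arc we use lets us change that arc's contribution to the cycle length by any value in an interval of length $\Theta(r)$. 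One checks that a vertex is good with probability at least $c(\delta^2\wedge1)$ for an absolute $c>0$, essentially independently of goodness of vertices at $C$-distance $\ge 3$; hence, by a Chernoff bound and a union bound over $O(L/\ell)$ length-$\ell$ blocks of $C$, with probability $1-e^{-\Omega((\delta^2\wedge1)\ell)}$ every length-$\ell$ arc of $C$ carries $\Theta((\delta^2\wedge1)\ell)$ good vertices.

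Now take the coarse resolution $w\asymp(\delta^2\wedge1)\ell$ to match the slack provided by a length-$\ell$ arc's good vertices, partition the relevant range of span values $[\ell-3,L-\ell+3]$ into $O(L/w)$ windows of width $w$, and consider the event (of probability $1-\tfrac{L}{w}e^{-\Omega(w\delta L/n)}$) that each window contains the span of a chord of $C$ whose relevant arc also carries enough good vertices. On the intersection of these two events we obtain a cycle of length exactly $t$ for every $t\in[\ell,L-\ell+4]$: take the chord of the window containing $t-1$ (or $L-t+1$), form the associated cycle, and trim the appropriate number of its good vertices; the $O(1)$ lengths within bounded distance of $L-\ell$ (and, if needed, of $\ell$) are produced directly from a run of good vertices anchored on $C$. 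A union bound over the two failure events gives the asserted probability $1-C_1e^{-C_2(\delta^2\wedge1)\ell}$. The main obstacle — and where the constants genuinely have to be fought — is reconciling the two scales: $w$ must be large enough that $\tfrac{L}{w}e^{-\Omega(w\delta L/n)}\le e^{-\Omega((\delta^2\wedge1)\ell)}$ and yet small enough that a length-$\ell$ arc of $C$ reliably contains at least $w$ good vertices, which is exactly what forces $w\asymp(\delta^2\wedge1)\ell$ and dictates the exponent; the border régime where $L$ is only marginally above $2\ell$ and $\delta$ is small — so $C$ is too short to carry $\Omega(\ell)$ good vertices — has a correspondingly short target interval and must be treated separately, using only a bounded number of chords of $C$.
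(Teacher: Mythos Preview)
The paper does not prove this statement at all: it is quoted verbatim as Theorem~3.1 of Alon--Krivelevich--Lubetzky \cite{alon2021cycle} and used as a black box in the derivation of Theorem~\ref{thm:weaklypanc}. There is therefore no ``paper's own proof'' to compare your sketch against.

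That said, your outline has a genuine gap in the fine-adjustment step. You claim that a cycle vertex $v_i$ is ``good'' with probability at least $c(\delta^2\wedge 1)$, citing as evidence events such as $\{v_{i-1},v_{i+1}\}\in E(F)$ or $v_{i-1},v_{i+1}$ sharing a common $F$-neighbour off $C$. But each of these events has probability $O(\delta/n)$ or $O(\delta^2/n)$ respectively --- they vanish with $n$ and cannot give a constant lower bound independent of $n$. Consequently the expected number of good vertices on a length-$\ell$ arc is $O(\ell\,\mathrm{poly}(\delta)/n)$, not $\Theta((\delta^2\wedge 1)\ell)$, and the Chernoff step collapses. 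Any fine-adjustment mechanism in $G(n,\delta/n)$ that relies on a \emph{local} $F$-configuration of two or more prescribed edges will suffer the same $1/n$ penalty.

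A related issue appears in the coarse step: your chord-per-window failure probability $e^{-\Omega(w\delta L/n)}$ depends on $L/n$, and nothing in the hypotheses bounds $L$ away from $o(n)$; when $L\ll n/\delta$ the exponent is $o(1)$ regardless of how $w$ is chosen. The actual argument in \cite{alon2021cycle} avoids these obstacles by a different mechanism --- roughly, it works with random edges one at a time and tracks how each successfully shortens (or lengthens) the current cycle, rather than asking for many independent local gadgets to be present simultaneously.
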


To generalize the notion of pancyclic graphs Brandt \cite{brandt1997sufficient} introduced the notion of weakly pancyclic graphs. A graph $G$ is \emph{weakly pancyclic} if it contains cycles of every length between the lengths of its shortest and longest cycles. In the following theorem we study the distribution of the set of cycles lengths of $G(n,p)$ and determine the limit of the probability that it is weakly pancyclic as $n\to \infty$. 
\begin{theorem}\label{thm:weaklypanc}
Let $G\sim G(n,c/n)$, $c\geq 20$. Then for every $S\subseteq [L(G)] \setminus \{1,2\}$,
\begin{equation}\label{eq:probweakly}
    \lim_{n\to \infty} \Pr( G  \text{ contains a cycle of length $l$ for }l\in S) = \prod_{k\in S}\bigg(1-e^{-\frac{c^k}{2k}}\bigg).
\end{equation}
In particular, 
\begin{equation}\label{eq:probweakly2}
\lim_{n\to \infty} \Pr( G  \text{ is weakly pancyclic }) = \sum_{k\geq 3}\prod_{\ell=3}^{k-1}e^{-\frac{c^{\ell}}{2\ell}}\prod_{\ell=k}^{\infty}\bigg(1-e^{-\frac{c^\ell}{2\ell}}\bigg).
\end{equation}
\end{theorem}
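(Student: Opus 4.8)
The plan is to separate the relevant cycle lengths into two regimes. For \emph{bounded} lengths $l$, the count $Z_{c,l}$ of $l$-cycles is asymptotically $\mathrm{Poisson}(c^l/2l)$, and these counts are asymptotically independent over any finite set of lengths — this is exactly the Bollob\'as / Karo\'nski--Ruci\'nski result quoted above. For \emph{unbounded} lengths $l=l(n)\to\infty$, I will show that w.h.p.\ every such $l$ up to $L(G)$ is in fact a cycle length of $G$. Writing $\mathcal{L}(G)\subseteq\{3,\dots,L(G)\}$ for the set of cycle lengths of $G$, the whole theorem will be deduced from two facts: (I) for disjoint finite sets $A,B\subseteq\{3,4,\dots\}$, the probability that $G$ contains an $l$-cycle for every $l\in A$ and no $l$-cycle for any $l\in B$ tends to $\prod_{l\in A}(1-e^{-c^l/2l})\prod_{l\in B}e^{-c^l/2l}$; and (II) for every $\omega(n)\to\infty$, w.h.p.\ $[\omega(n),L(G)]\subseteq\mathcal{L}(G)$. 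Fact (I) is immediate from the cited joint Poisson convergence together with $\Pr(\mathrm{Poisson}(\lambda)=0)=e^{-\lambda}$, and the fact that the limiting law is integer-valued.

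The hard part is (II), and I would prove it by a two-round exposure combined with the two halves of Theorem~\ref{thm:main}. Fix a small constant $\delta=\delta(c)>0$ and reveal $G\sim G(n,c/n)$ as $G=H\oplus F$, where $F\sim G(n,\delta/n)$ is a reserved sprinkle and $H\sim G(n,(c-\delta)/n)$. Applying Theorem~\ref{thm:AKL} to $H$ and $F$ with $\ell=\omega(n)$: since $\delta$ is a positive constant, $C_1e^{-C_2(\delta^2\wedge1)\omega(n)}\to0$, so w.h.p.\ $G$ contains an $l$-cycle for every $l\in[\omega(n),\,L(H)-\omega(n)+4]$. On the other hand, part~(a) of Theorem~\ref{thm:main} gives $L(H)=(f(c-\delta)+o(1))n$ and $L(G)=(f(c)+o(1))n$ a.s., so by the continuity of $f$ one may choose $\delta$ small enough that $L(H)\ge L(G)-0.1\,c^3e^{-c}n+\omega(n)$ w.h.p. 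Part~(b) of Theorem~\ref{thm:main} supplies an $l$-cycle for every $l\in[L(G)-0.1\,c^3e^{-c}n,\,L(G)]$, and the two ranges overlap, so w.h.p.\ $G$ contains an $l$-cycle for every $l\in[\omega(n),L(G)]$. This is the step I expect to be the main obstacle, and it is where the new structural input of the paper (Theorem~\ref{thm:main}(b)) meets the augmentation result of \cite{alon2021cycle}. One extra wrinkle: at the boundary $c=20$ one cannot take $\delta>0$ with $c-\delta\ge20$ and so cannot feed $H$ into part~(a); there I would keep $\delta$ a tiny constant and bound $L(G)-L(H)$ directly rather than through $f$.

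Granting (I) and (II), I would finish \eqref{eq:probweakly} by a diagonal argument. Fix a large constant $K$ and set $S_K=S\cap[3,K]$. The event ``$G$ has an $l$-cycle for all $l\in S$'' is contained in ``$G$ has an $l$-cycle for all $l\in S_K$'', so by (I) its probability is at most $\prod_{l\in S_K}(1-e^{-c^l/2l})+o(1)$; letting $n\to\infty$ and then $K\to\infty$ gives the upper bound $\prod_{l\in S}(1-e^{-c^l/2l})$. For the lower bound, that event contains ``$G$ has an $l$-cycle for all $l\in S_K$'' intersected with $\{[K,L(G)]\subseteq\mathcal{L}(G)\}$; the complement of the latter lies in $\{\exists\,l\in[K,\log\log n]:Z_{c,l}=0\}\cup\{[\log\log n,L(G)]\not\subseteq\mathcal{L}(G)\}$, whose probability is at most $\sum_{l\ge K}\Pr(Z_{c,l}=0)+o(1)$ by (II), and $\sum_{l\ge K}\Pr(Z_{c,l}=0)\to0$ as $K\to\infty$ by a standard Janson-type bound on $\Pr(Z_{c,l}=0)$ together with $\sum_{l\ge3}e^{-c^l/2l}<\infty$. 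Hence the probability is at least $\prod_{l\in S_K}(1-e^{-c^l/2l})-o(1)-o_K(1)$, matching the upper bound, which proves \eqref{eq:probweakly}.

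Finally, $G$ is weakly pancyclic exactly when, with $g$ its girth, $\mathcal{L}(G)=[g,L(G)]$; partitioning on the value of $g$,
\begin{equation*}
\Pr(G\text{ is weakly pancyclic})=\sum_{k\ge3}\Pr\big(Z_{c,l}=0\text{ for }3\le l\le k-1,\ \text{and}\ [k,L(G)]\subseteq\mathcal{L}(G)\big).
\end{equation*}
For each fixed $k$, the same diagonal argument via (I) and (II) shows the $k$-th summand tends to $\prod_{\ell=3}^{k-1}e^{-c^\ell/2\ell}\prod_{\ell=k}^{\infty}(1-e^{-c^\ell/2\ell})$. The summands are probabilities of pairwise disjoint events, and for every $n$ the $k$-th one is at most $\Pr(Z_{c,k-1}=0)\le e^{-c^{k-1}/(8(k-1))}$, a bound summable in $k$ and uniform in $n$, so dominated convergence lets me exchange the limit and the sum, yielding \eqref{eq:probweakly2}.
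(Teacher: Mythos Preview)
Your proposal is correct and follows the same skeleton as the paper: split cycle lengths into a bounded regime (Poisson convergence for short cycles), a middle regime handled by a two-round exposure $G=G_1\oplus G_2$ together with Theorem~\ref{thm:AKL}, and the top regime $[L(G)-0.1c^3e^{-c}n,\,L(G)]$ handled by Theorem~\ref{thm:main}(b).

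Two places where the paper's execution differs from yours and is a bit simpler. First, to make the AKL range meet the part~(b) range, the paper does \emph{not} compare $L(G_1)$ to $L(G)$ via the continuity of $f$; instead it applies the explicit weak bound of Lemma~\ref{lem:weakbound} to $G_1$ (with $d$ chosen so that $0.05d^3e^{-d}=0.1c^3e^{-c}$), giving $L(G_1)\ge n-0.04d^3e^{-d}n$ directly, and then the overlap follows from the trivial $L(G)\le n$. This sidesteps the $c=20$ wrinkle you flagged, since no appeal to $f$ at $c-\delta$ is needed. Second, the paper does not introduce an intermediate scale (your $\log\log n$) or a Janson bound: it simply fixes a constant $\ell=\ell(\epsilon)$ large enough that both $C_1e^{-C_2(\delta^2\wedge 1)\ell}<\epsilon$ and $\prod_{k>\ell}(1-e^{-c^k/2k})>1-\epsilon$, applies Theorem~\ref{thm:AKL} with that $\ell$, and sandwiches the desired probability between $\prod_{k\in S}(1-e^{-c^k/2k})\pm\epsilon$ in one step. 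Your detour through Janson is valid but unnecessary.
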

Observe that \eqref{eq:probweakly} is given by Bollob\'as and by Karo\'nski and Ruci\'nski in the case $\max S=O(1)$. In the proof of Theorem \ref{thm:weaklypanc} we make use of a weak lower bound on $L_{c,n}$ given by the following Lemma. Its proof is located at the end of Section \ref{sec:longcycles}.

\begin{lemma}\label{lem:weakbound}
 W.h.p. $n-0.04c^3e^{-c}n \leq L_{c,n}$ for $20\leq c \leq 0.4\log n$.  
\end{lemma}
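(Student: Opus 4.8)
The plan is to obtain the cycle from the long-cycle construction carried out in Section~\ref{sec:longcycles}. There one builds a subgraph $H$ of $G$ (the graph $H_4$, or a closely related subgraph) and exhibits a cycle of $G$ spanning $V(H)$, so that $L_{c,n}\geq |V(H)|$; hence it suffices to show that $|V(G)\setminus V(H)|\leq 0.04\,c^3e^{-c}n$ w.h.p.\ for every $c$ with $20\leq c\leq 0.4\log n$. The vertices of $V(G)\setminus V(H)$ are among those discarded while passing from $G$ to $H$: the vertices lying outside the $2$-core of $G$ (pendant trees), together with the bounded collection of low-degree vertices that the construction cannot place inside the contracted kernel of $H$. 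This is the same mechanism behind Frieze's bound $L_{c,n}\geq\big(1-(1+\epsilon_c)(1+c)e^{-c}\big)n$ from \cite{frieze1986large}; Section~\ref{sec:longcycles} supplies a version of it that remains valid up to $c=0.4\log n$, and the present lemma only asks for the coarse consequence with the clean error term $0.04\,c^3e^{-c}n$.

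To bound $|V(G)\setminus V(H)|$ I would note that, in expectation, all but a negligible fraction of the discarded vertices have $G$-degree at most $2$: a fixed vertex of $G$ has degree at most $2$ with probability $(1+o(1))e^{-c}\big(1+c+\tfrac{c^2}{2}\big)$, while the vertices discarded for any other reason — a higher-degree vertex trapped inside a pendant tree, or the exceptional degree-$3$ configurations treated in Section~\ref{sec:longcycles} — contribute only $o(c^2e^{-c}n)$ in expectation and are absorbed using Markov's inequality. Thus the expected size of $V(G)\setminus V(H)$ is at most $(1+o(1))e^{-c}\big(1+c+\tfrac{c^2}{2}\big)n$. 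For $c\geq 20$ one has $1+c+\tfrac{c^2}{2}\leq 0.04\,c^3$ with room to spare (indeed the gap between the two sides is increasing in $c$), and this slack swallows the $(1+o(1))$ factor; moreover the number of degree-at-most-$2$ vertices is a $2$-Lipschitz function of the edge indicators, so Azuma's inequality yields concentration about the mean. Since $c^2e^{-c}n\to\infty$ uniformly over $20\leq c\leq 0.4\log n$ — the function $c^2e^{-c}$ is decreasing on this range, so its minimum is attained at $c=0.4\log n$, where $c^2e^{-c}n\geq(0.4\log n)^2n^{0.6}\to\infty$ — the concentration loses nothing, and we conclude $|V(G)\setminus V(H)|\leq 0.04\,c^3e^{-c}n$ w.h.p.

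The point requiring genuine care is not the combinatorics but the uniformity in $c$: the first-moment estimate, the Azuma concentration, and — most importantly — the input from Section~\ref{sec:longcycles} that $G$ contains a cycle spanning $V(H)$ must all hold simultaneously for every $c\in[20,\,0.4\log n]$, not merely for a fixed constant. For the counting estimates this is automatic once one observes that $c^2e^{-c}n\to\infty$ in this regime, so that Chernoff- and Azuma-type bounds are lossless; for the structural input it is precisely the property for which the construction of $H_4$ was designed, so one simply invokes it. I expect that, beyond quoting Section~\ref{sec:longcycles}, the only real work is the bookkeeping that upgrades ``$G$ discards essentially $e^{-c}(1+c+c^2/2)n$ vertices'' to the stated bound $0.04\,c^3e^{-c}n$ with enough slack to absorb the error factors for every admissible $c$.
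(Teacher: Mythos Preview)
Your high-level plan---invoke the long-cycle machinery of Section~\ref{sec:longcycles} and bound the number of vertices missed---is the same as the paper's. But your description of what that section actually supplies, and your vertex count, are both off.

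Section~\ref{sec:longcycles} does not produce a subgraph $H$ together with a cycle spanning $V(H)$. What it gives is Theorem~\ref{basic}: w.h.p.\ $L(G)=n-\sum_{T\in\cT(G,\gamma^*)}\phi(T,\gamma^*)$, where the sum runs over the components $T$ of $G^{r/b}$ (the red/blue graph from the strong $4$-core coloring) and $\phi(T,\gamma^*)$ is the minimum number of red vertices of $T$ left uncovered by any collection of vertex-disjoint paths with blue endpoints. So the quantity to bound is $\sum_T\phi(T,\gamma^*)$, not $|V(G)\setminus V(H)|$ for some $H$ whose description you leave vague.

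The paper bounds this sum by writing down an explicit path system: for every red vertex $v$ of degree $\geq 2$ that is the \emph{unique} red vertex in its component of $G^{r/b}$, pick two of its (blue) neighbours and take the length-$2$ path through $v$. The red vertices left uncovered are then exactly (i) the vertices of degree $0$ or $1$, contributing $(1+c)e^{-c}n+o(n)$, and (ii) the red vertices lying in components with at least two red vertices, which Lemma~\ref{lem:expsizecomp}(c) bounds by $0.03c^3e^{-c}n$. Summing gives at most $(0.03c^3+c+1)e^{-c}n+o(n)\leq 0.04c^3e^{-c}n$ for $c\geq 20$.

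The gap in your argument is the sentence ``the vertices discarded for any other reason \dots\ contribute only $o(c^2e^{-c}n)$ in expectation''. In the paper's construction the ``other'' term---the red vertices in multi-red components---is $\Theta(c^3e^{-c}n)$, not $o(c^2e^{-c}n)$; indeed for $c\geq 20$ it is the \emph{dominant} term, dwarfing the $(1+c)e^{-c}n$ coming from degree-$\leq 1$ vertices. Your bound of $(1+c+c^2/2)e^{-c}n$ is neither what the construction produces nor justified by any argument you give; there is no version of the Section~\ref{sec:longcycles} construction that discards essentially only the degree-$\leq 2$ vertices. To make the proof go through you must invoke Lemma~\ref{lem:expsizecomp}(c) (or reprove it) to control the multi-red components---that is where the constant $0.04$ in the lemma actually comes from.
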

 
\textbf{Proof of Theorem \ref{thm:weaklypanc}:} For $c\geq 0.4\log n$ Theorem \ref{thm:weaklypanc} follows from the fact that the 2-core of $G(n,p)$ is pancyclic w.h.p. thus we may assume that $20\leq c<0.4\log n$.  Let $d$ be such that $n-0.05d^3e^{-d}n= n-0.1c^3e^{-c}n$. Then $d<c$, in particular $c-d=\Omega(1)$. We may generate $G$ by letting $G_1\sim G(n, {d}/{n})$, $G_2\sim G(n,p')$ and $G=G_1\oplus G_2$ where $(1-c/n)=(1-p')(1-d/n)$. Let $\epsilon>0$ and $\ell$ be the minimum positive integer such that $C_1e^{-C_2(\delta^2\wedge 1)\ell}<\epsilon$ and $\prod_{k=\ell+1}^{\infty} (1-e^{-\frac{c^k}{2k}})>1-\epsilon$ where the constants $C_1,C_2$ are as in the statement of Theorem \ref{thm:AKL}. Also denote by $\cL(G)$ the set $\{l\in [n]:G \text{ spans a cycle of length } l\}$.

Lemma \ref{lem:weakbound} applied to $G_1$ and Theorem \ref{thm:AKL} applied to $G_1\oplus G_2$ give that $G$ contains a cycle of length $l$ for every integer $l\in [\ell, n-0.05d^3e^{-d}n] = [\ell, n-0.1{c^3e^{-c}n}]$ with probability at least $1-C_1e^{-C_2(\delta^2\wedge 1)\ell}+o(1)$. On the other hand part (b) of Theorem \ref{thm:main} implies that $\cL(G)$ contains the integers in $[L(G)-0.1{c^3e^{-c}n}, L(G)]$ w.h.p. As $L(G)-0.1{c^3e^{-c}n} \leq n- 0.1{c^3e^{-c}n}$ we have that $\cL(G)$ contains $[\ell,L(G)]$ with probability at least $1-C_1e^{-C_2(\delta^2\wedge 1)\ell}+o(1)$. Combining this last statement with the results of Bollob\'as and of Karo\'nski and Ruci\'nski gives \eqref{eq:probweakly}. Indeed for $S\subset [L(G)]\setminus \{1,2\}$,
\begin{align*}
 \prod_{k\in S}\bigg(1-e^{-\frac{c^k}{2k}}\bigg)
 +\epsilon& \geq    \prod_{k\in S\cap [\ell]}\bigg(1-e^{-\frac{c^k}{2k}}\bigg)=
    \lim_{n\to \infty} 
    \Pr( S\cap [\ell] \subseteq \cL(G))
\\  & \geq \lim_{n\to \infty} 
    \Pr( S \subseteq \cL(G))
      \geq \lim_{n\to \infty} 
    \Pr( S\cap [\ell] \subseteq \cL(G) \text{ and } [\ell+1,L(G)]\subseteq L(G))
    \\&\geq \prod_{k\in S\cap [\ell]}\bigg(1-e^{-\frac{c^k}{2k}}\bigg)-
 C_1e^{-C_2(\delta^2\wedge 1)\ell}\geq \prod_{k\in S}\bigg(1-e^{-\frac{c^k}{2k}}\bigg)-\epsilon.
\end{align*}
Similarly one can derive \eqref{eq:probweakly2}; the summation at \eqref{eq:probweakly2} corresponds to the sum over $k$ of the probabilities that $G$ is weakly pancyclic and has girth $k$. 
\qed

The proof of Theorem \ref{thm:main} relies on the study of an induced subgraph of $G$ and how it lies in $G$ which we relate to a subset $S$ of $V(G)$ which we call the \emph{strong $4$-core} of $G$. We define the strong $4$-core of $G$ and establish some of its basic properties in Section \ref{sec:strong}. Using the strong $4$-core we identify an induced subgraph $F$ of $G(n,p)$ such that  no subgraph of $G$ that spans more vertices can be hamiltonian. We then prove that $F$ is hamiltonian and derive parts (b) and (c) of Theorem \ref{thm:main}. This, modulo the  Hamiltonicity argument which is presented at Section \ref{sec:Ham}, is presented at Section \ref{sec:longcycles}. Finally, for the sake of completeness, at Section \ref{sec:scalinglimit} we present the proof of part (a) of Theorem \ref{thm:main}.

\section{Preliminaries and Notation}
For a graph $G$ we denote by $V(G)$ and $E(G)$ its vertex set and edge set respectively. For $v\in V(G)$ and $k\in \mathbb{N}$  we denote by $N^k(v), N^{<k}(v)$ and $N^{\leq k}(v)$ the set of vertices  within distance exactly $k$, less than $k$ and at most $k$ respectively from $v$ in $G$. For $U\subseteq V(G)$ we let $N(U)$ be the set of vertices in $V(G)\setminus U$ that are adjacent to $U$ and $G[U]$ be the subgraph of $G$ induced by $U$. For $M\subseteq \binom{V(G)}{2}$ we let $G\cup M= (V(G),E(G)\cup M)$ and $G\setminus M= (V(G),E(G)\setminus M)$. We denote by $\delta(G)$ and $\Delta(G)$ the minimum and maximum respectively degree of $G$. Finally by $\log x$ we denote the natural logarithm of $x$.

Throughout the paper we make use of Lemma \ref{lem:AzumaHoeffding}, an extension of McDiarmid’s inequality given by Warnke in \cite{warnke2016method} (see Theorem 1.2 and Remark 2). Compared to the more general Theorem 1.2 of \cite{warnke2016method}, Lemma \ref{lem:AzumaHoeffding} is restated in a form that is easier to apply in our setting. For the reduction of Lemma \ref{lem:AzumaHoeffding} from Theorem 1.2 of \cite{warnke2016method} we let $G\sim G(n,p)$ with $np\leq 2\log n$, consider the vertex exposure martingale for revealing $G$ and make use of the fact that $G$ has maximum degree smaller than $\log^2 n$ with probability $1-o(n^{-10})$.

\begin{lemma}\label{lem:AzumaHoeffding}
Let $G\sim G(n,p)$ with $np\leq 2\log n$. Let $f$ be a graph theoretic function such that $|f(G')|\leq n$ for every graph $G'$ of order $n$. Assume that there exists an integer $d=d(n)$ with the property 
that for every $v\in [n]$ and every graph $G_1$ on $[n]$ of maximum degree $\log^2 n$, with $G_2$ being the graph obtained from $G_1$ by deleting  all the edges incident to $v$, we have that 
$$ |f(G_1)-f(G_2)|\leq 0.5d.$$
Then for every $t>0$,
\begin{align}\label{eq:Azuma}
\Pr ( | f(G)-\mathbb{E}(f(G)) |> t ) \leq 2\exp \left( -\frac{t^2}{2n(d+1)^2} \right) + o(n^{-8}).
\end{align}
\end{lemma}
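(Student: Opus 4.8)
The plan is to derive Lemma~\ref{lem:AzumaHoeffding} from Theorem~1.2 of \cite{warnke2016method} (in the refined form given by Remark~2 there) applied to the vertex-exposure filtration of $G\sim G(n,p)$, taking as the ``bad'' event $\mathcal{B}$ that $G$ has a vertex of degree at least $\log^2 n$. The three inputs I would prepare are: a tail estimate showing $\Pr(\mathcal{B})=o(n^{-10})$; the fact that $f$ is a function of $n-1$ independent coordinates, one per vertex; and a bound on how much $f$ changes when a single coordinate is altered, separately in the ``typical'' and the ``worst'' case.

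First, for the tail estimate: each degree $\deg_G(v)$ has distribution $\mathrm{Bin}(n-1,p)$ with mean at most $np\le 2\log n$, so a Chernoff bound gives $\Pr(\deg_G(v)\ge \log^2 n)\le (2e/\log n)^{\log^2 n}$, which is superpolynomially small in $n$, and a union bound over $v\in[n]$ yields $\Pr(\mathcal{B})=o(n^{-10})$. Second, I would reveal $G$ one vertex at a time: for $1\le i\le n-1$ let $\xi_i$ record the edges between $i$ and $\{i+1,\dots,n\}$; these $n-1$ coordinates are independent and determine $G$, so $f(G)=f(\xi_1,\dots,\xi_{n-1})$. Third, for the bounded-differences verification: suppose $\omega$ and $\omega'$ agree in every coordinate except the $i$-th, so that $G(\omega)$ and $G(\omega')$ differ only in edges incident to $i$; then deleting all edges at $i$ from either of them produces the \emph{same} graph $G^{(i)}$. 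If both $G(\omega)$ and $G(\omega')$ have maximum degree below $\log^2 n$, applying the hypothesis of the lemma twice (with $v=i$, once with $G_1=G(\omega)$ and once with $G_1=G(\omega')$, taking $G_2=G^{(i)}$ in both cases) gives
\[
|f(\omega)-f(\omega')|\le |f(G(\omega))-f(G^{(i)})|+|f(G^{(i)})-f(G(\omega'))|\le 0.5d+0.5d=d,
\]
while in all cases $|f(\omega)-f(\omega')|\le |f(G(\omega))|+|f(G(\omega'))|\le 2n$ since $|f|\le n$.

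Finally, I would feed this into Warnke's inequality with typical Lipschitz constants $c_i=d$, worst-case constants $d_i=2n$, bad event $\mathcal{B}$, and auxiliary parameter $\gamma=\tfrac1{2n}$. The ``variance proxy'' is then $\sum_{i=1}^{n-1}\big(c_i+\gamma(d_i-c_i)\big)^2\le (n-1)(d+2n\gamma)^2\le n(d+1)^2$, producing the main term $2\exp\!\big(-t^2/(2n(d+1)^2)\big)$ (the factor $2$ from applying the one-sided bound to $\pm f$), while the error term is $\Pr(\mathcal{B})$ times a factor polynomial in $n$ and in $1/\gamma=2n$, hence $o(n^{-8})$ by the first step; this is exactly \eqref{eq:Azuma}. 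I expect the only genuinely delicate point to be the bounded-differences verification: the lemma's hypothesis controls the deletion of all edges at a vertex only \emph{starting from} a graph of maximum degree $<\log^2 n$, so a single-coordinate change whose outcome leaves that regime is not directly controlled, which is precisely why one must invoke Remark~2 of \cite{warnke2016method} rather than the bare statement of Theorem~1.2 --- and also why the choice $\gamma=\tfrac1{2n}$ is essentially forced, being about the largest value that keeps $c_i+\gamma(d_i-c_i)\le d+1$ while leaving $\Pr(\mathcal{B})/\gamma$ negligible.
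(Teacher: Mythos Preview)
Your proposal is correct and follows precisely the reduction the paper itself sketches: vertex-exposure coordinates, the bad event $\mathcal{B}=\{\Delta(G)\ge \log^2 n\}$ with $\Pr(\mathcal{B})=o(n^{-10})$, and an application of Warnke's Theorem~1.2 together with Remark~2. The paper does not write out the verification of the Lipschitz constants or the choice of $\gamma$; your argument supplies exactly these missing details (the triangle-inequality step through $G^{(i)}$ and the choice $\gamma=1/(2n)$), and is the intended derivation.
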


\section{The strong \texorpdfstring{$k$-core}{Lg}}\label{sec:strong}

The $k$-core of $G$ is the induced subgraph of $G$ whose vertex set is the maximal subset $S$ of $V(G)$ with the property that every vertex in $S$ has at least $k$ neighbors in $S$. It is well known to be unique and it can be obtained by iteratively removing from $G$ vertices with fewer than $k$ neighbors among the vertices left. The concept of the $k$-core was introduced by Bollob\'as in his study of the evolution of sparse graphs \cite{bollobas1984evolution}. Some years later, Pittel, Spencer and Wormald \cite{pittel1996sudden} proved that the property of having a nonempty $k$-core has a sharp threshold in the random graph model $G(n,p)$. Namely the proved that there exists a constant $c_k$ such that $G(n,c/{n})$ has a nonempty $k$-core with probability $1-o(1)$ if $c>c_k$ and with probability $o(1)$ if $c<c_k$. In addition they gave a way of calculating $c_k$.

To identify the vertex set of a longest cycle in $G(n,p)$ we use a concept similar to that of the $k$-core. For a graph $G$ we define the \emph{strong $k$-core} of $G$ to be the maximal subset $S$ of $V(G)$ with the property that every vertex in $S\cup N(S)$ has at least $k$ neighbors in $S$. Observe that if the sets $S_1,S_2\subset V(G)$ have this property then so does the set $S_1\cup S_2$. Thus the strong $k$-core of a graph is well-defined. It can also be obtained via the following red/blue/black coloring procedure:

\begin{algorithm}[H]
\caption{}
\begin{algorithmic}[1]
\\ Input: a graph $G$, an integer $k$.
\\ Initially color all the vertices of $G$ black.  \While{  there exists a black or blue vertex $v\in V(G)$ with fewer than $k$ black neighbors}
\\\hspace{5mm} Color $v$ red and its black neighbors blue.
\EndWhile
\\ Return the coloring of $G$.
\end{algorithmic}
\end{algorithm}

For a graph $G$ we let $V_{k,black}(G), V_{k,blue}(G)$ and $V_{k,red}(G)$ be the set of vertices whose final color given by Algorithm 1 is black, blue and red respectively. Also denote by $SC_k(G)$ the vertex set of its strong $k$-core. Observe that the set $V_{k,black}(G)$ has the property that no vertex in $V_{k,black}(G)\cup N(V_{k,black}(G))$ is red. Therefore every vertex $v\in V_{k,black}(G)\cup N(V_{k,black}(G))$ has at least $k$ neighbors in $V_{k,black}(G)$. Consequentially, $V_{k,black}(G) \subseteq SC_k(G)$. On the other hand no vertex in $SC_k(G)$ would  ever be colored red or blue. Indeed assume otherwise and let $v$ be the first vertex in $SC_k(G)$ that receives a color red or blue. If that color is red then at that moment $v$ has fewer than $k$ black neighbors. Else if $v$ receives color blue then it has a neighbor $u$ that receives color red and therefore at that moment $u$ has less than $k$ black neighbors. As in both cases $SC_k(G)$ is a subset of the set of black vertices at the moment that $v$ receives a color other than black we get a contradiction. 

For the rest of this paper we will denote by $V_{black}(G)$ the vertex set of the strong $4$-core of $G$, by $V_{blue}(G)$  the neighborhood of $V_{black}(G)$ and by $V_{red}(G)$ the rest of the vertices of $G$. We call the vertices in $V_{black}(G)$, $V_{blue}(G)$ and $V_{red}(G)$, black, blue and red respectively.  In addition we  denote by $G^{r/b}$ the subgraph of $G$ induced by $V_{blue}(G) \cup V_{red}(G)$. A crucial observation about the structure of the subgraph of $G$ induced by $V_{blue}\cup V_{red}(G)$ is the following one.
\begin{observation}\label{obs:redvsblue}
During the execution of Algorithm 1 with inputs $G,4$, every time a vertex is colored red at most $3$ of its neighbors are colored blue. Thus every component $C$ of $G^{r/b}$ contains at least $\frac{|C|}{4}$ red vertices. These vertices do not have any neighbor outside $C$. 
\end{observation}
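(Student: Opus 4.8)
The plan is to follow Algorithm~1 with $k=4$ and keep track, for each vertex that ends up blue, of the red vertex that ``caused'' it to become blue; the whole observation then reduces to the fact that this bookkeeping map is at most $3$-to-$1$.

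For the first assertion: when the while-loop picks a vertex $v$ and colours it red, the loop condition says that at that moment $v$ has fewer than $4$, hence at most $3$, black neighbours. The only recolouring performed at this step turns exactly those ($\le 3$) black neighbours of $v$ blue; neighbours of $v$ that were already blue or red are left alone. So at most $3$ neighbours of $v$ become blue at the step in which $v$ becomes red.

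For the count, note first that a vertex whose final colour is blue was coloured blue exactly once --- it was a black neighbour of some vertex $v$ at the step $v$ turned red, and it is never subsequently recoloured (recolouring a blue vertex would make it red, not keep it blue). Let $r(u)$ denote that vertex $v$. Then $r(u)\in V_{red}(G)$, $u\,r(u)\in E(G)$, and, by the first paragraph, at most $3$ blue vertices are mapped by $r$ to any fixed red vertex. Now fix a component $C$ of $G^{r/b}$. For a blue $u\in C$ the vertex $r(u)$ is red and adjacent to $u$, hence it lies in $V(G^{r/b})$ and in the same component, so $r(u)\in C$. Thus $r$ restricts to a map from the blue vertices of $C$ to the red vertices of $C$ which is at most $3$-to-$1$; writing $r_C$ for the number of red vertices of $C$, and recalling $C\subseteq V_{blue}(G)\cup V_{red}(G)$ so that every non-red vertex of $C$ is blue, this gives $|C|-r_C\le 3r_C$, i.e.\ $r_C\ge|C|/4$.

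For the last assertion, recall the fact recorded just before the observation: no vertex of $V_{black}(G)\cup N(V_{black}(G))$ is red, equivalently a red vertex has no black neighbour. Hence every $G$-neighbour of a red vertex $v\in C$ is blue or red, so it lies in $V(G^{r/b})$, and being adjacent to $v$ it lies in $C$. So the red vertices of $C$ have no neighbour outside $C$. There is no serious obstacle here; the only point requiring a moment's care is that $r$ is well defined and single-valued, which is exactly why one records the (unique) step at which a final-blue vertex was turned blue rather than, say, some later event.
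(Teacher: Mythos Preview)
Your proof is correct and is exactly the argument the paper has in mind; the paper states this as an observation without proof, and your bookkeeping map $r$ from final-blue vertices to the red vertex that caused them (at most $3$-to-$1$, and landing in the same component since $u\,r(u)\in E(G)$) is precisely the formalisation of the ``Thus'' in the statement.
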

In the following Lemma we summarize the  properties of the strong 4-core of a random graph that we are going to use later on.
\begin{lemma}\label{lem:expsizecomp}
Let $G\sim G(n,{c}/{n})$, $20\leq c \leq 2\log n$. For $i\geq 1$ let $X_i$ be the number of vertices in $G$ that lie in components of size $i$ in $G^{r/b}$. Then the following hold with probability $1-o(n^{-2})$.
\begin{itemize}
    \item[(a)] $\mathbb{E}(X_i)\leq 0.8^{-i}n/(ci)$ and $X_i\leq 0.8^{-i}n/(ci)+n^{0.55}$ for $1\leq i\leq \log^3n$.
    \item[(b)] $X_i=0$ for $i\geq (10^3\log  n)/{c}$.
    \item[(c)] At most $0.03c^3e^{-c}n$ red vertices lie in a component of $G^{r/b}$ with at least 2 red vertices.
    \item[(d)] $|V_{red}(G)| \leq 0.25c^3e^{-c}n$ and $|V_{red}(G)\cup V_{blue}(G)| \leq  c^3e^{-c}n$.
\end{itemize} 
\end{lemma}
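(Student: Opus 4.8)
\emph{Overview.} I would prove the first estimate in~(a) and part~(b) by a first-moment calculation resting on Observation~\ref{obs:redvsblue}, and the second estimate in~(a) together with parts~(c),~(d) by combining first moments, a local analysis of the strong-$4$-core peeling, and the concentration inequality of Lemma~\ref{lem:AzumaHoeffding}. The hypothesis $c\ge 20$ should enter only through making a certain geometric ratio strictly less than $1$.

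\emph{Necessary condition and first moment.} The starting observation is that if $C$ is a component of $G^{r/b}$ with $|C|=i$, then $G[C]$ is connected and, by Observation~\ref{obs:redvsblue}, $C$ contains a set $R$ with $|R|\ge i/4$ (its red vertices) joined by no edge of $G$ to $V(G)\setminus C$. The two events ``$G[C]$ connected'' and ``$R$ sends no edge outside $C$'' depend on disjoint sets of potential edges and are hence independent, the former having probability at most $i^{i-2}p^{i-1}$ (the expected number of spanning trees of $G[C]$, by Cayley's formula). This gives
\begin{equation*}
\mathbb{E}(X_i)=i\sum_{|C|=i}\Pr\bigl(C\text{ a component of }G^{r/b}\bigr)\le i\binom{n}{i}\binom{i}{\lceil i/4\rceil}(1-p)^{\lceil i/4\rceil(n-i)}\,i^{i-2}p^{i-1},
\end{equation*}
which, with $p=c/n$ and Stirling, I would simplify to $(1+o(1))\frac{n}{ci}\bigl(2ec\,e^{-c/4}\bigr)^{i}$ for $i\le\log^{3}n$. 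Since $c\mapsto 2ec\,e^{-c/4}$ decreases for $c\ge 4$ and equals $40e^{-4}<0.8$ at $c=20$, it stays below $0.8$ for all $c\ge 20$, yielding the first bound of~(a). For~(b) I would use $\Pr(X_i\ge 1)=\Pr(X_i\ge i)\le\mathbb{E}(X_i)/i$ and sum over $10^{3}\log n/c\le i\le\varepsilon n$, obtaining a geometric series of total mass $o(n^{-2})$ once $c\ge 20$ (this is the step where $\log(2ec)$ must be smaller than a fixed multiple of $c$), and handle $i\ge\varepsilon n$ separately: such a component would contain $\ge\varepsilon n/4$ vertices incident to no edge leaving $C$, contradicting---with failure probability $e^{-\Omega(n)}$ via Chernoff and a union bound over the $2^{n}$ subsets---the edge-expansion of $G(n,c/n)$.

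\emph{Parts (c) and (d).} The key reduction: if a component $C$ of $G^{r/b}$ has a unique red vertex $v$, then every other vertex of $C$ was coloured blue because of $v$, so $|C|\le 4$ and (all neighbours of $v$ being in $C$) $\deg_G(v)\le 3$; hence $V_{red}(G)\subseteq\{v:\deg_G(v)\le 3\}\cup Z$, where $Z$ is the set of red vertices in components of $G^{r/b}$ with at least two red vertices. I would bound $|\{v:\deg_G(v)\le 3\}|$, which has expectation $(1+o(1))\bigl(1+c+\tfrac{c^{2}}{2}+\tfrac{c^{3}}{6}\bigr)e^{-c}n$ and changes by at most $1+\log^{2}n$ under deletion of one vertex's edges, by Lemma~\ref{lem:AzumaHoeffding} with $t=n^{0.55}$. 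For $\mathbb{E}|Z|$ the union bound above is far too lossy; instead I would estimate $\Pr(v\in Z)$ through the local geometry of $G$---coupling the bounded-radius neighbourhood of $v$ with a Poisson$(c)$ Galton--Watson tree and running the strong-$4$-core peeling there---noting that membership in $Z$ forces a second locally atypical vertex near $v$, an event of probability $c^{O(1)}e^{-2c}$, so that $\mathbb{E}|Z|\le 0.02\,c^{3}e^{-c}n$ for $c\ge 20$. Combined with the concentration of $|Z|$ (next paragraph) this gives $|Z|\le 0.03\,c^{3}e^{-c}n$, which is~(c), and then $|V_{red}(G)|\le(1+o(1))\bigl(1+c+\tfrac{c^{2}}{2}+\tfrac{c^{3}}{6}\bigr)e^{-c}n+0.03\,c^{3}e^{-c}n\le 0.25\,c^{3}e^{-c}n$ for $c\ge 20$; since every blue vertex was coloured blue by a red vertex and each red vertex created at most three blue ones, $|V_{blue}(G)|\le 3|V_{red}(G)|$, so $|V_{red}(G)\cup V_{blue}(G)|\le 4|V_{red}(G)|\le c^{3}e^{-c}n$, giving~(d).

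\emph{Concentration---the main obstacle.} The hard part is concentrating $X_i$ (for $i\le\log^{3}n$) and $|Z|$: Lemma~\ref{lem:AzumaHoeffding} cannot be applied to them directly, since the strong $4$-core is global and deleting the edges at a single vertex of a $4$-regular graph destroys it entirely, so neither function is $\mathrm{poly}(\log n)$-Lipschitz over all graphs of maximum degree $\le\log^{2}n$. I would instead invoke the general form of Warnke's inequality with a bad event $\mathcal B$: either $G$ has a vertex of degree $>\log^{2}n$, or $G$---or $G$ with the edges at some single vertex removed---has a red/blue component of size $>10^{3}\log n/c$. On $\mathcal B^{c}$, deleting the edges at any vertex changes $SC_{4}(G)$, and hence $X_i$ and $|Z|$, by only $\mathrm{poly}(\log n)$, because the vertices whose colour changes lie in $O(\log^{2}n)$ components (of the red/blue part of the graph before and after the deletion), each of size $O(\log n)$; and $\Pr(\mathcal B)=o(n^{-10})$ by the first-moment bound above, applied also after deleting one vertex's edges and union-bounded over that vertex. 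Warnke's inequality then yields $\Pr(|f-\mathbb{E}f|>n^{0.55})=o(n^{-8})$ for each such $f$, and a union bound over $1\le i\le\log^{3}n$ finishes. I expect the genuine work to be this stability analysis---choosing the right typical event and verifying it has probability $1-o(n^{-10})$---and, to a lesser extent, the branching-process first moment for $\mathbb{E}|Z|$, where the crude ``connected set with a positive density of closed vertices'' estimate loses a large constant factor.
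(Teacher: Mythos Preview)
Your proposal follows the paper's strategy: bound $\mathbb E(X_i)$ via Observation~\ref{obs:redvsblue} (a component of size $i$ contains $\ge i/4$ vertices sending no edge outside), then concentrate via Warnke's inequality; for (c) and (d), split $V_{red}$ into degree-$\le 3$ vertices plus red vertices lying in multi-red components, and bound the latter separately.

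Two differences are worth noting. First, on concentration: the paper simply asserts that deleting all edges at a vertex $v$ changes the number of size-$i$ components of $G^{r/b}$ by at most $d(v)+1$ (``any new component contains an endpoint of a deleted edge''), and applies Lemma~\ref{lem:AzumaHoeffding} directly. Your objection is well taken---one can build a bounded-degree graph in which the peeling from $v$ runs along a chain of degree-$4$ vertices and absorbs arbitrarily many pre-existing size-$2$ red/blue components far from $v$, so the Lipschitz bound the paper states does not hold over all graphs of maximum degree $\le\log^2 n$. Your fix via the full Warnke inequality, with bad event ``some red/blue component (in $G$, or in $G$ with one vertex's edges removed) exceeds $10^3\log n/c$'', is the clean way to make this step rigorous; the first-moment bound you already have, together with a union bound over the deleted vertex and the paper's intermediate-size argument for $i\ge\log^3 n$, gives $\Pr(\mathcal B)=o(n^{-10})$, and on $\mathcal B^c$ the change in $X_i$ is at most the total size of the affected components, which is $\mathrm{poly}(\log n)$.

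Second, for (c) the paper does not couple with a Galton--Watson tree; it enumerates directly. A component of $G^{r/b}$ with exactly two red vertices is either an adjacent pair with at most five further neighbours, or a non-adjacent pair with a common neighbour and at most six further neighbours, yielding $\mathbb E(Y_2)\le 10^{-4}c^3e^{-c}n$; for $s\ge 3$ red and $0\le t\le 3s$ blue vertices one sums over spanning trees on $s+t$ vertices with the $s$ red vertices isolated from $[n]\setminus(S\cup T)$, obtaining $\sum_{s\ge 3}\mathbb E(Y_s)\le 0.02\,c^3e^{-c}n$. This explicit route is what produces the constant $0.03$, and is less delicate than the branching-process argument you sketch.
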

\begin{proof}
(a) Observation \ref{obs:redvsblue} implies that for every component of size $i$ we can identify sets $S,T$ with $|S|\geq i/4$, $|S|+|T|=i$ such that $G$ spans a tree on $S\cup T$ and no vertex in $S$ has a neighbor outside $S\cup T$. Therefore for $i\geq 1$,
\begin{align*}
    \mathbb{E}(X_i)&\leq i \binom{n}{i}i^{i-2}p^{i-1}\binom{i}{i/4}(1-p)^{\frac{i(n-i)}{4}} \leq  \bfrac{en}{i}^ii^{i-1}p^{i-1} 2^i    e^{-\frac{pi(n-i)}{4}}
\\&    \leq \frac{n}{ci} \bigg(2enpe^{-(0.25+o(1))c} \bigg)^i  \leq \frac{0.8^{-i}n}{ci}.
\end{align*}
At the last inequality we used that $c\geq 20$. For $v\in [n]$ deleting all the edges incident to $v$ in $G$ may increase or decrease the number of components of $G^{r/b}$ of size $i$ by at most $d(v)+1\leq \Delta(G)+1$ (any ``new" component contains an endpoint of a deleted edge). Therefore, Lemma \ref{lem:AzumaHoeffding} implies that $X_i\leq 0.8^{-i}n/(ci)+n^{0.55}$ for $1\leq i\leq \log^3n$ with probability $1-o(n^{-2})$.

(b) From the above calculation we also get,
\begin{align*}
    \Pr\bigg(\sum_{i=\frac{10^3\log n}{c}}^{\log^3 n}X_i>0\bigg)
\leq \mathbb{E}\bigg(\sum_{i=\frac{10^3\log n}{c}}^{\log^3 n}X_i\bigg) \leq \sum_{i=\frac{10^3\log n}{c}}^{\log^3 n} n \bigg(2ece^{-0.235c} \bigg)^i e^{-0.01ci} =O(n^{-9})
\end{align*}
Now assume that $G^{r/b}$ has a component $C$ of size larger than $\log^3 n$. For $t\geq 0$ let $m_t$ be the largest component spanned by the vertices of $C$ that are either red or blue right after the $t^{th}$ time the while-loop of Algorithm 1 is executed.  Since at every step of our process a single vertex is colored red we have that $m_{t+1}\leq 1+ \Delta(G)\cdot \max\{m_t,1\}$. Thus either $\Delta(G)\geq \log^{1.5}n-1$ or there exists $t\geq 0$ such that $\log^{1.5} n\leq m_t \leq \log^3 n$. In the second case at time $t$ the vertices of $C$ span a component $C'$ on $m_t$ vertices with at least $m_t/4$ red vertices. Those red vertices have no neighbor outside $C'$ in $G$. Therefore $G^{r/b}$ spans a component of size at least $\log^3 n$ with probability at most
$$ O(n^{-9})+ \sum_{i=\log^{1.5} n}^{\log^3 n} n   \bigg(2enpe^{-(0.25+o(1))c} \bigg)^i +\Pr(Bin(n,p)\geq \log^{-1.5}n-1) =o(n^{-2}). $$

(c),(d) Let $Y$ and $Y_i$, $i\geq 1$ be the number of red vertices that lie in a component of $G^{r/b}$ with at least $2$ and exactly $i$ respectively red vertices. Then, $Y= Y_2+\sum_{i\geq 3} Y_i$. A component of $G^{r/b}$ with exactly 2 red vertices consists either of two adjacent vertices $u,v$ that have at most 5 neighbors in total in $[n]\setminus \{u,v\}$ or two non-adjacent vertices $u,v$ that have a common neighbor $w$ and at most 6 additional neighbors in total in $[n]\setminus \{u,v,w\}$. Therefore,
\begin{align*}
    \mathbb{E}(Y_2)&\leq 2\binom{n}{2}p \sum_{i=0}^5 \binom{n}{i} 2^ip^i (1-p)^{2(n-2-i)}+  2\binom{n}{2} n p^2 \sum_{i=0}^6 \binom{n}{i} 2^ip^i (1-p)^{2(n-3-i)}
    \\&\leq (1+o(1))cn \sum_{i=0}^5 \frac{2^ip^{i}n^i}{i!} e^{-2pn}+ (1+o(1)) c^2n  \sum_{i=0}^6 \frac{2^ip^in^i}{i!}  e^{-2pn}
    \\&\leq  (1+o(1))c^2e^{-2c}n\bigg( \sum_{i=0}^5 \frac{(2c)^i}{i!c}+ \sum_{i=0}^6 \frac{(2c)^i}{i!} \bigg)
    \leq c^2e^{-2c}n\cdot \frac{2c^6}{6!} \leq 10^{-4}c^3e^{-c}n. 
\end{align*}
Thereafter, similarly to the calculation of $\mathbb{E}(X_s)$ we have,
\begin{align*}
    \sum_{s=3}^{\log^3 n} \mathbb{E}( Y_s) &\leq \sum_{s = 3}^{\log^3 n}\sum_{t=0}^{3s} s\binom{n}{s+t} \binom{s+t}{t} (s+t)^{s+t-2}p^{s+t-1} (1-p)^{(n-s-t)s }
    \\&\leq \sum_{s= 3}^{\log^3 n} \bigg(\sum_{t=0}^{3s} c^{-t}\bigg) s\binom{n}{4s} \binom{4s}{3s} (4s)^{4s-2}p^{4s-1} e^{-p(n-4s)s }
    \\&\leq \sum_{s= 3}^{\log^3 n}  \frac{1.1 (4cs)^{4s-1}  e^{-cs } n} {(3s)!s!} 
    \leq \sum_{s = 3}^{6} \frac{1.1 (4cs)^{4s-1}  e^{-cs } n} {(3s)!s!}  + \sum_{s= 7}^{\log^3 n} \frac{1.1n}{4cs} \bfrac{e^{4}c^{4} 4^{4} e^{-c}} {3^3}^s
    \\& \leq 0.015c^3e^{-c}n +c^3e^{-c}n\sum_{s= 7}^{\log^3 n}  \frac{1.1e^44^4}{3^3 4s} \bfrac{e^{4}c^{4} 4^{4} e^{-c}} {3^3}^{s-1}
   \\& \leq 0.015c^3e^{-c}n +c^3e^{-c} n \sum_{s= 7}^{\log^3 n}  \frac{142.5}{s} \cdot 0.175^s \leq 0.02c^3e^{-c}n.
\end{align*}
Lastly, $Y_1$ is bounded above by the number of vertices of degree $0,1,2$ or $3$ in $G$. Therefore, $\mathbb{E}(Y_1)\leq (1+c+0.5c^2+c^3/6)e^{-c}n\leq 0.2c^3e^{-c}n$.

For $v\in [n]$ deleting all the edges incident to $v$ in $G$ may increase or decrease the number of components of $G^{r/b}$ with exactly $i$ red vertices by at most  $d(v)+1\leq \Delta(G)+1$ (any ``new" component contains an endpoint of a deleted edge). Therefore, part (b) of this lemma and Lemma \ref{lem:AzumaHoeffding} imply that $|V_{red}(G)|=\sum_{i=1}^{\log^3 n} Y_i \leq 0.25c^3e^{-c}n$ and 
$Y=\sum_{i=2}^{\log^3 n} Y_i \leq 0.03c^3e^{-c}n$ with probability $1-o(n^{-2})$. Finally, by Observation \ref{obs:redvsblue}, $|V_{red}(G)\cup V_{blue}(G)|\leq 4|V_{red}(G)|\leq c^3e^{-c}n$ with probability $1-o(n^{-2})$.
\end{proof}

For proving Theorem \ref{thm:main} we will use  Theorem \ref{thm:ham}. We apply Theorem \ref{thm:ham} in the next section while we present its proof in Section \ref{sec:Ham}. 

\begin{theorem}\label{thm:ham}
Let $G\sim G(n,c/n)$, $20\leq c$. Let $G'$ be the subgraph of $G$ induced by $V_{black}(G)\cup V_{blue}(G)$. Then for every $U \subseteq V_{blue}(G)$ and matching $M$ on  $V_{blue}\setminus U$ we have that $G'[V(G')\setminus U] \cup M$ contains a Hamilton cycle that spans all the edges in $M$  with probability $1-O(n^{-2})$.
\end{theorem}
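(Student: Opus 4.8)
The plan is to combine a structural analysis of how the strong $4$-core of $G$ sits inside $G$ with a rotation--extension/colouring argument in the spirit of \cite{anastos2021scaling} (which in turn descends from \cite{fenner1984hamiltonian}). The feature that must be built in is \emph{robustness}: the conclusion is claimed for every admissible pair $(U,M)$, and since these may depend on $G$ and there are exponentially many of them one cannot afford a union bound. Instead the Hamilton cycle must be produced by an argument that succeeds, with probability $1-O(n^{-2})$, for all $(U,M)$ at once.

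\emph{Structural reductions.} Fix $U\subseteq V_{blue}(G)$, a matching $M$ on $V_{blue}(G)\setminus U$, and put $H=G'[V(G')\setminus U]\cup M$. By the defining property of the strong $4$-core every vertex of $V_{black}(G)\cup V_{blue}(G)$ has at least four neighbours in $V_{black}(G)$; as $U$ consists only of blue vertices this survives, so $\delta(H)\ge 4$ and, more to the point, every vertex of $H$ has at least four neighbours in the \emph{fixed} set $V_{black}(G)$. Moreover, by Observation \ref{obs:redvsblue} no black vertex has a red neighbour, hence $N_H(S)\supseteq N_{G[V_{black}(G)]}(S\cap V_{black}(G))$ for every $S$: the expansion of $H$ ``through the core'' is that of the large induced subgraph $G[V_{black}(G)]$ and does not depend on $U$ or $M$. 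Thus all the randomness and connectivity that will matter lie inside $G[V_{black}(G)]$, with the blue vertices merely attached to it by at least four edges each and $M$ an extra matching among them.

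\emph{Fresh randomness and pseudorandom properties.} We reveal $G$ in two rounds: the first exposes only enough edges at each vertex to decide its colour under Algorithm 1, so that the partition $V_{black}(G),V_{blue}(G),V_{red}(G)$ and the attachments of the blue vertices are fixed after round one; the second exposes the remaining edges inside $V_{black}(G)$, which, conditioned on round one, again form a random graph of the same density and so provide ``fresh'' randomness. Using $c\ge 20$, first-moment bounds and Lemma \ref{lem:AzumaHoeffding}, one checks that with probability $1-O(n^{-2})$: (i) $G$ has no subgraph on at most $\log^3 n$ vertices with more than one extra edge (so small sets are near-forests and low-degree vertices are scattered); (ii) every vertex set of size between $5$ and $\beta n$ spans at most $1.9$ times its size many edges, for a small absolute constant $\beta>0$; and (iii) there are no two disjoint vertex sets of size $\ge\beta n$ with no edge between them. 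By the previous paragraph these pass to $H$ for every admissible $U$, and the round-two randomness supplies, inside $V_{black}(G)$, the extra probabilistic input needed below.

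\emph{Hamiltonicity engine and main obstacle.} We then run a rotation--extension/colouring argument on $H$: using the round-two edges where possible, pick $F\subseteq E(H)$ with $M\subseteq F$ such that $F$ is a disjoint union of paths and cycles spanning $V(H)$ and $\delta(H-E(F))\ge 2$ (possible since $\delta(H)\ge 4$), and then patch the pieces of $F$ together into a single Hamilton cycle by rotations inside $H-E(F)$: at each step the set of endpoints reachable by rotations is large, by (iii) it must send an edge into a not-yet-absorbed piece, which we splice in, and we never delete an edge of $M$ (legitimate since $M$ is a matching whose endpoints, by (i), do not cluster), routing the cycle through each blue vertex via its core-neighbours. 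The hard part---and the reason for working at minimum degree $4$ rather than $3$---is to make the rotation phase succeed \emph{simultaneously for all admissible $(U,M)$} with only minimum degree $4$ available: with degree $3$ the remainder $H-E(F)$ has minimum degree $1$ and one is forced into the full machinery of \cite{fenner1984hamiltonian} (whence the hypothesis $c\ge 10^6$ of \cite{anastos2021scaling}), whereas minimum degree $4$ leaves a minimum-degree-$2$ remainder whose residual randomness, handled by a sprinkling/colouring argument, suffices to complete the merges robustly, covering all $c\ge 20$. Controlling the rare degree-$2$ vertices of the remainder---keeping rotations away from them using (i)---and checking that the scheme is insensitive to the choice of $(U,M)$ is the technical heart of the proof.
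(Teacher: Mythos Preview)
Your high-level shape (expansion from the core + fresh randomness + rotations, with robustness over all $(U,M)$ coming from the fact that the relevant expansion lives inside $G[V_{black}(G)]$) is the same as the paper's, but two of your three building blocks are not set up correctly, and the third is genuinely different.

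\textbf{Fresh randomness.} Your two-round exposure ``the first exposes only enough edges at each vertex to decide its colour'' does not make sense: the strong $4$-core is a global object, and there is no local rule that fixes a vertex's colour after seeing a bounded number of its edges. The paper instead performs an \emph{independent Bernoulli split} of $E(G)$: each edge $e$ receives $Y_e\sim\mathrm{Bernoulli}(1/(c\log\log n))$, and one sets $H'=H_1\cup\{\text{edges incident to }V_1\}$, $E_1=E(H)\setminus E(H')$, where $V_1$ is the (small) set of vertices of $H$ that lost their fourth black neighbour in the split. The point is that conditioned on $H'$, $V_1$ and $|E_1|$, the set $E_1$ is \emph{uniform} over all $|E_1|$-subsets of non-$E(H')$ pairs in $V(H)\setminus V_1$; this is what drives the final Bin$(\Theta(n/\log\log n),\Theta(1))$ calculation. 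Your scheme does not deliver such a clean conditional distribution.

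\textbf{Initial path cover.} You write ``pick $F\subseteq E(H)$ with $M\subseteq F$ \ldots\ a disjoint union of paths and cycles spanning $V(H)$ and $\delta(H-E(F))\ge 2$'' and then merge the pieces. But you never bound the number of components of $F$, and this is the whole game: the merging phase consumes one fresh edge per merge, so you need $|\mathrm{comp}(F)|=o(|E_1|)$. The paper obtains a path cover of size at most $4n/\log^{1/2}n$ by applying the Tutte--Berge formula \emph{twice}: first a maximum matching $M_1$ in $H'\setminus M$, then a maximum matching $M_2$ on the degree-$\le 1$ vertices of $M\cup M_1$; the key structural lemma is that $H'$ and $H'[V(H')\setminus V_M]$ both lie in $\mathcal P(n/\log^{1/2}n,\log^{1/2}n,0)$ (no bad $(U_1,U_2,U_3)$-partition), which forces both matchings to be within $O(n/\log^{1/2}n)$ of perfect. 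Your $\delta(H-E(F))\ge 2$ device gives no such bound.

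\textbf{Why degree $4$ suffices.} Your explanation (``$H-E(F)$ has minimum degree $2$ rather than $1$'') is not the paper's reason. The paper never removes a $2$-factor. The actual role of ``$4$ neighbours in $V_{black}$'' is this: during P\'osa rotations admissible w.r.t.\ $(E(H'),M)$, each endpoint has $\ge 4$ neighbours in $V_{black}$, at most one of which is its predecessor on the current path, and since black vertices are never incident to $M$, the other $\ge 3$ rotations are all admissible and never delete an $M$-edge. This yields branching factor $\ge 3$ from the outset and, together with the expansion event $\mathcal E_{exp}$ (small sets are sparse; sets of size $\ge n/c^9$ have large black neighbourhood), forces $|End_i|\ge 10^{-30}n$ at every iteration, uniformly over $(U,M)$.
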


Comparing the $k$-core with the strong $k$-core we note the following. The $k$-core can be thought as a procedure that separates the sparse from the denser portion of a graph. On the other hand the strong $k$-core is a procedure that separates the sparse (red) portion of the graph from a dense one (black) by a vertex cut (blue) while ensuring that the vertices in the cut are robustly connected to the dense part. This last key property is what enable us to extend a given matching $M$ on the blue vertices to a cycle that covers all of $M$, $V_{blue}(G)\setminus U$ and $V_{black}(G)$ in the proof of Theorem \ref{thm:ham}.

\section{Identifying the vertex set of a longest cycle}\label{sec:longcycles}
We start this section by showing how any red/blue/black vertex coloring of $G$ with the property that there does not exist a red to black edge can be used to upper bound  $L(G)$. We then use the red/blue/black coloring associated with the strong $4$-core (described in the previous section) to obtain an upper bound on $L(G(n,c/n))$ which will turn out to be tight. 

\begin{notation}\label{not:pathcover}
For a graph $G$ and a coloring $\gamma:V(G)\to \{red,blue,black\}$ we let $\mathcal{T}(G,\gamma)$ be the set of the components of the subgraph of $G$ induced by the $\gamma$-blue and $\gamma$-red  vertices. Thereafter, for $T\in \mathcal{T}(G,\gamma)$ we denote by $\cP_{T,\gamma}$ the set of all sets of vertex disjoint paths with $\gamma$-blue endpoints spanned by $T$. Here we allow paths of length 0. So a single blue vertex counts as a path. For $P\in \cP_{T,\gamma}$ let $n(T,\gamma,P)$ be the number of red vertices in $V(T)$ that are not covered by some path in $P$. Finally we let  $\f(T,\gamma)=\min_{P\in \cP_{T,\gamma}} n(T,\gamma,P)$.  
\end{notation}

\begin{lemma}\label{lem:upperbound}
For any red/blue/black coloring  $\gamma$ of $G$ with the property that there is no edge from a red to a black vertex we have,
\begin{equation}\label{upperbound}
    L(G)\leq |V(G)|-\sum_{T\in \cT(G,\gamma)}\f(T,\gamma).
\end{equation}
\end{lemma}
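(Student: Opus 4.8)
The plan is to take a longest cycle $C$ in $G$ and, working within each component $T\in\cT(G,\gamma)$ of the red/blue subgraph, show that $C$ must leave uncovered at least $\f(T,\gamma)$ of the red vertices of $T$; summing over all $T$ then yields \eqref{upperbound}. The crucial structural input is the hypothesis that no red vertex is adjacent to a black vertex: this forces the cycle $C$, restricted to the red/blue vertices, to decompose nicely. Concretely, consider $C\cap T$, meaning the set of maximal subpaths of $C$ all of whose vertices lie in $V(T)$. Any such subpath that contains a red vertex cannot have a black vertex as a neighbor on $C$, so its two ``exit edges'' from $C$ must go to blue vertices — but those blue neighbors lie in $V(T)$ as well (blue vertices of $T$ form, together with the red ones, the whole component $T$, and a red vertex's neighbors are all blue or red, hence in $T$). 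The cleanest way to phrase this: every maximal subpath of $C$ lying in $V(T)$ can be extended, if necessary by absorbing adjacent blue vertices of $C$, to a subpath whose two endpoints are blue, and these extended subpaths are vertex-disjoint and contained in $T$.

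First I would fix $C$ a longest cycle and, for each $T$, define $P_T$ to be the collection of those maximal subpaths of $C$ contained in $V(T)$ that contain at least one red vertex. I then argue each member of $P_T$ has blue endpoints: if an endpoint $v$ of such a subpath were red, its neighbor on $C$ outside the subpath would have to be black (since the subpath is maximal in $V(T)$, that neighbor is not in $V(T)$, hence not blue or red), contradicting the no-red-to-black hypothesis — unless the subpath is a single red vertex, in which case both $C$-neighbors are outside $V(T)$, again black, same contradiction. Actually one has to be slightly careful: a maximal subpath in $V(T)$ could end at a blue vertex whose $C$-neighbor outside $T$ is black, which is fine. So the endpoints are automatically blue whenever a red vertex is present, except possibly we should trim: if a subpath in $V(T)$ has a red endpoint it simply cannot occur. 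Hence $P_T\in\cP_{T,\gamma}$ (disjointness is inherited from $C$, blue endpoints just shown, and paths of length $0$ are allowed — a lone blue vertex — so there's no degenerate issue). By definition of $\f(T,\gamma)$, the number of red vertices of $T$ not covered by $P_T$ is at least $\f(T,\gamma)$.

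Next I would count. The red vertices of $T$ not covered by $P_T$ are, by construction, red vertices of $T$ not on $C$ at all: any red vertex of $T$ that lies on $C$ lies on some maximal subpath of $C$ inside $V(T)$, and that subpath contains a red vertex, so it belongs to $P_T$. Therefore the number of vertices of $G$ not on $C$ is at least $\sum_{T} (\text{red vertices of }T\text{ off }C) \ge \sum_T \f(T,\gamma)$, where the sum is over $T\in\cT(G,\gamma)$; the inequality between the two sums is term-by-term. Since $|V(C)| = |V(G)| - |\{v : v\notin C\}| \le |V(G)| - \sum_{T\in\cT(G,\gamma)}\f(T,\gamma)$ and $L(G)=|V(C)|$, we obtain \eqref{upperbound}. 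One should also handle the trivial edge cases: if $G$ has no cycle then $L(G)$ is $0$ (or undefined) and the bound is vacuous or trivially true since the right side is at most $|V(G)|$; and the sum is finite because only components $T$ containing at least one red vertex contribute a positive term, and there are finitely many components.

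The main obstacle, and the step deserving the most care, is the claim that the maximal subpaths of $C$ lying in $V(T)$ that contain a red vertex automatically have blue endpoints and thus form a valid member of $\cP_{T,\gamma}$ — in particular verifying there is no subtlety when such a subpath touches both red and blue vertices, and confirming that a red vertex on $C$ is necessarily captured by one of these subpaths rather than being isolated by the maximality definition. Everything else is bookkeeping: disjointness comes free from $C$ being a cycle, the identification of ``uncovered red vertices of $T$'' with ``red vertices of $T$ not on $C$'' is immediate once the subpath structure is pinned down, and the final summation is a one-line inequality.
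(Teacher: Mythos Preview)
Your proposal is correct and follows essentially the same approach as the paper's own proof: restrict a longest cycle $C$ to each component $T\in\cT(G,\gamma)$, observe that the resulting maximal subpaths must have $\gamma$-blue endpoints (a red endpoint would force a red--black edge along $C$), conclude that these paths lie in $\cP_{T,\gamma}$ and hence miss at least $\f(T,\gamma)$ red vertices, and sum over $T$. Your write-up is more detailed --- and your restriction to subpaths containing a red vertex is a harmless refinement since the discarded all-blue subpaths cover no red vertices anyway --- but the underlying argument is identical to the paper's.
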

\begin{proof}
For any $T\in \mathcal{T}(G,\gamma)$ and any cycle $C$ of $G$ we have that $C$ induces a set of vertex disjoint paths on $V(T)$ with $\gamma$-blue endpoints. These paths leave uncovered at least $\phi(T,\gamma)$ many  $\gamma$-red vertices of $V(T)$. Hence any cycle of $G$ spans at most $n-\sum_{T\in \cT(G,\gamma)}\f(T,\gamma)$ vertices.  
\end{proof}

Henceforward we let $\gamma^*:V(G)\to \{\text{red, blue, black}\}$ be the coloring that colors the vertices of $V_{x}(G)$ with color $x$ for $x\in \{\text{red,blue,black}\}$. Recall that we refer to $\gamma^*$-red/blue/black vertices as simply red/blue/black vertices. For $T\in \mathcal{T}(G,\gamma^*)$ we fix a set of vertex disjoint paths with blue endpoints $P^*(T)$ with the property that $\cup_{T\in T(G,\gamma^*)} P^*(T)$ covers all but $\sum_{T\in \cT(G,\gamma^*)}\f(T,\gamma^*)$ red vertices. 
We also let $\cT(G)$ be the set of paths in $\cup_{T\in T(G,\gamma^*)} P^*(T)$ that cover a single red vertex. 

\begin{theorem}\label{basic}
Let $G\sim G(n,c/n), c\geq 20$. With probability $1-O(n^{-2})$,
\begin{equation}
 L(G) = n -\sum_{T\in \cT(G,\gamma^*)}\f(T,\gamma^*).
\end{equation}
In addition $G$ spans a cycle of length $L(G)-\ell$  with probability $1-O(n^{-2})$ for $0\leq \ell \leq |\cT(G)|$.
\end{theorem}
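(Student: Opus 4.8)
The upper bound $L(G) \le n - \sum_{T\in\cT(G,\gamma^*)}\f(T,\gamma^*)$ is immediate from Lemma \ref{lem:upperbound}, since $\gamma^*$ has no red–black edge by the defining property of the strong $4$-core (every vertex in $V_{black}(G)\cup N(V_{black}(G)) = V_{black}(G)\cup V_{blue}(G)$ has all its neighbours... — more precisely, a red vertex can only be adjacent to red or blue vertices). So the content is the matching lower bound: we must exhibit, w.h.p.\ (with probability $1-O(n^{-2})$), a cycle of length exactly $n - \sum_T \f(T,\gamma^*)$, and more generally of length $n - \sum_T\f(T,\gamma^*) - \ell$ for each $0\le\ell\le|\cT(G)|$.

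Here is the plan. For each $T\in\cT(G,\gamma^*)$, the fixed family $P^*(T)$ is a set of vertex-disjoint paths with blue endpoints inside $T$, together covering all red vertices of $T$ except $\f(T,\gamma^*)$ of them. Collapse each such path to a single edge on its two blue endpoints (a path of length $0$, i.e.\ an isolated blue vertex, is simply left as that vertex); let $U\subseteq V_{blue}(G)$ be the set of blue vertices that are \emph{interior} to some path of $P^*(T)$ of positive length (these get used up by the paths) together with... — actually I only need to remove interior blue vertices; endpoints stay. Let $M$ be the matching on $V_{blue}(G)\setminus U$ consisting of these collapsed edges (one per positive-length path; paths of length $1$ contribute a genuine edge of $G$, longer paths contribute a "virtual" edge). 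I then invoke Theorem \ref{thm:ham} with this $U$ and this $M$: the graph $G'[V(G')\setminus U]\cup M$, where $G'=G[V_{black}(G)\cup V_{blue}(G)]$, has a Hamilton cycle through all edges of $M$ with probability $1-O(n^{-2})$. Un-collapsing each virtual edge of $M$ back into its path, this Hamilton cycle becomes a cycle in $G$ visiting every black vertex, every blue vertex, and every red vertex covered by $\bigcup_T P^*(T)$ — that is, all but $\sum_T\f(T,\gamma^*)$ vertices of $G$. Its length is therefore exactly $n-\sum_T\f(T,\gamma^*)$, matching the upper bound. One must check two bookkeeping points: that $M$ is indeed a matching on $V_{blue}(G)\setminus U$ (distinct paths are vertex-disjoint, so their endpoint pairs are disjoint, and endpoints are not interior vertices of any path), and that un-collapsing preserves vertex-disjointness and yields a genuine cycle (the paths being vertex-disjoint and their interiors disjoint from everything in $G'\setminus U$). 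Note Theorem \ref{thm:ham} requires $M$ to be a matching on $V_{blue}(G)\setminus U$, i.e.\ we must have removed all vertices that $M$ would otherwise conflict with; since $M$'s vertices are exactly the blue path-endpoints, which survive in $V(G')\setminus U$, this is fine.

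For the "in addition" part: given $0\le\ell\le|\cT(G)|$, pick $\ell$ of the singleton-red-vertex paths in $\cT(G)$ — each is a path $b_1 r b_2$ with $b_1,b_2$ blue and $r$ the unique red vertex — and instead of collapsing $b_1 r b_2$ to a virtual edge $b_1b_2$, simply delete it from consideration: remove $r$ (and its two blue neighbours? no — keep $b_1,b_2$ as isolated blue vertices to be covered by the Hamilton cycle, just don't force the edge through $r$). That is, we run the same argument but with a smaller matching $M'\subseteq M$, dropping the $\ell$ chosen virtual edges; the blue endpoints $b_1,b_2$ of a dropped path are then no longer in $U$ and get covered by the Hamilton cycle directly. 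The resulting cycle omits precisely the $\sum_T\f(T,\gamma^*)$ originally-omitted red vertices \emph{plus} the $\ell$ red vertices we just dropped, so it has length $n-\sum_T\f(T,\gamma^*)-\ell$. Theorem \ref{thm:ham} applies verbatim with $(U',M')$ in place of $(U,M)$, again with probability $1-O(n^{-2})$, and a union bound over the (at most $n$) values of $\ell$ is absorbed into the $O(n^{-2})$ error since Theorem \ref{thm:ham}'s failure probability is $O(n^{-2})$ uniformly — wait, that would give $O(n^{-1})$; better to note that it suffices to prove the statement for each fixed $\ell$ with probability $1-O(n^{-2})$, which is exactly what is asserted, so no union bound is needed.

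The main obstacle is not any single step but making sure the reduction to Theorem \ref{thm:ham} is set up with exactly the right $U$ and $M$: one must verify that the paths in $\bigcup_T P^*(T)$ have their interiors entirely inside $V_{blue}(G)\cup V_{red}(G)$ (true, since $T\in\cT(G,\gamma^*)$ is a component of $G^{r/b}$ and contains no black vertices), that the red interior vertices and the dropped singleton reds are genuinely removed and do not interfere with $G'$, and that "passing through all edges of $M$" in Theorem \ref{thm:ham} is strong enough to guarantee the un-collapsed object is a single cycle rather than a union of cycles — this is where one uses that each element of $M$ is a single edge, so forcing the Hamilton cycle through it and then subdividing it does not change the cycle structure. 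All of this is routine once stated carefully; the substantive input is entirely Theorem \ref{thm:ham}.
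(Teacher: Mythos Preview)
Your proposal is correct and matches the paper's proof: upper bound from Lemma~\ref{lem:upperbound}, lower bound by collapsing the paths of $\bigcup_T P^*(T)$ (minus $\ell$ chosen paths from $\cT(G)$) to a matching on the surviving blue vertices, invoking Theorem~\ref{thm:ham}, and then un-collapsing.

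One small imprecision worth fixing: you assume each path in $\cT(G)$ has the form $b_1\,r\,b_2$ of length exactly $2$, but the definition only says such a path covers a single red vertex, so it could have blue interior vertices (e.g.\ $b_1 b_2 r b_3$). The paper handles this by letting the removed set depend on $\ell$: its $V_\ell^-$ consists of the interior vertices of the \emph{retained} paths only, so the blue interiors of dropped paths are released back into $V(G')\setminus U$ and get covered by the Hamilton cycle. Your sentence ``the blue endpoints $b_1,b_2$ of a dropped path are then no longer in $U$'' is the right instinct pointing in the wrong direction---endpoints were never in $U$; it is the interior blue vertices of the dropped paths that must be released. Alternatively you may simply note that without loss of generality every path in $\cT(G)$ has length $2$, since shortening a blue--blue path with one red vertex to its length-$2$ segment around that red vertex (and leaving the remaining blue vertices as length-$0$ paths) does not change the number of red vertices covered.
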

\begin{proof}
The inequality $L(G) \leq n -\sum_{T\in \cT(G,\gamma^*)}\f(T,\gamma^*)$ is given by Lemma \ref{lem:upperbound}. Indeed, as during Algorithm  1 every time a vertex is colored red its black neighbors are color blue we have that there is no edge from $V_{red}(G)$ to $V_{black}(G)$ and hence Lemma \ref{lem:upperbound} applies.

Now fix $\ell \in\{0,1,2,...,|\cT(G)|\}$ and let  $\{P_1,P_2,...,P_{|\cT(G)|} \}$ be an ordering of the paths in $\cT(G)$ (recall $\cT(G)$ is the set of paths in $\cup_{T\in T(G,\gamma^*)} P^*(T)$ that cover a single red vertex). Let $M(\ell)$ be the matching on $V_{blue}(G)$ obtained by replacing each path in $\cP(\ell):= \big(\cup_{T\in T(G,\gamma^*)} P^*(T)\big)\setminus \{P_i:i\in [\ell]\}$ by a single edge joining its endpoints. Also let $V_\ell^-$ be the set of vertices that lie in the interior of some path in $\cP(\ell)$. We define the graph $\Gamma(\ell)$ as follows. $V(\Gamma(\ell))=V_{black}(G) \cup(V_{blue}(G)\setminus V_\ell^-)$ and $E(\Gamma(\ell))$ consists of all the edges of $G$  spanned by  $V(\Gamma(\ell))$ plus the edges in $M(\ell)$.

Let $\mathcal{E}_\ell$ be the event that $\Gamma(\ell)$ contains a Hamilton cycle $C_\ell$ that spans all of the edges of $M(\ell)$. Assume that $\mathcal{E}_\ell$ occurs. Replace each edge of $C_\ell$ that belongs to $M(\ell)$ with the corresponding path in $\cP(\ell)$ and let $C_\ell'$ be the resulting cycle in $G$. Then $C_\ell'$ covers $V_{black}(G)\subseteq V(\Gamma_\ell)$. In addition, as $V_{blue}(G)\setminus V_\ell^- \subseteq V(\Gamma_\ell)$ and every vertex in $V_\ell^-$ lies in the interior of some path in $\cP(\ell)$, $C_\ell'$ covers $V_{blue}(G)$. Thereafter $C_\ell'$ also covers every vertex in  $V_{red}(G)$ that is covered by some path in $\cP(\ell)$. As  $\cP(\ell)= \big(\cup_{T\in T(G,\gamma^*)} P^*(T)\big)\setminus \{P_i:i\in [\ell]\}$, the set of vertex disjoint paths $\cup_{T\in T(G,\gamma^*)} P^*(T)$ covers $|V_{red}(G)|-\sum_{T\in \cT(G,\gamma^*)}\f(T,\gamma^*)$ vertices in $V_{red}(G)$ and each of the $\ell$ paths in $\{P_i:i\in [\ell]\}\subseteq \cup_{T\in T(G,\gamma^*)} P^*(T)$ covers a single vertex in $V_{red(G)}$ we have that 
$C_\ell'$ covers $|V_{red}(G)|-\sum_{T\in \cT(G,\gamma^*)}\f(T,\gamma^*)-\ell$ vertices in $V_{red}(G)$. All together $C_\ell'$ covers  $|V_{black}(G)|+|V_{blue}(G)|+|V_{red}(G)|-\sum_{T\in \cT(G,\gamma^*)}\f(T,\gamma^*)-\ell=n-\sum_{T\in \cT(G,\gamma^*)}\f(T,\gamma^*)-\ell$ vertices.

Theorem \ref{thm:ham} implies that $\Gamma(\ell)$ contains a Hamilton cycle that spans all of the edges of $M(\ell)$, hence $G$ spans a cycle of length $n-\sum_{T\in \cT(G,\gamma^*)}\f(T,\gamma^*)-\ell$, with probability $1-O(n^{-2})$. 
\end{proof}

\textbf{Proof of Lemma \ref{lem:weakbound}:} 
We construct a set of vertex disjoint paths in $G^{r/b}$ by taking 2 edges incident to every red vertex of degree at least 2 that lies in a component of $G^{r/b}$ containing a single red vertex. This set of edges induces a set of paths of length 2 with blue endpoints that do not cover red vertices in components with at least 2 red vertices and vertices of degree 0 or 1. $G$ has $(c+1)e^{-c}n +O(n^{-0.55})$ vertices of degree 0 or 1 w.h.p. (see \cite{frieze2016introduction}). Thus by Lemma \ref{lem:expsizecomp} they do not cover at most $(0.03c^3+c+1)e^{-c}+O(n^{0.6}) \geq 0.04c^3e^{-c}n $ red vertices w.h.p. Finally, Theorem \ref{basic} implies that $L_{c,n}\geq n-0.04c^3e^{-c}n$ w.h.p. 
\qed

\textbf{Proof of part (b) of Theorem \ref{thm:main}}
Given Theorem \ref{basic} it suffices to show that $|\cT(G)| \geq 0.1{c^3e^{-c}}n$ w.h.p. Every vertex of degree 3 lies in $V_{red}(G)$. Thus $|\cT(G)|$ is larger than the number of vertices of degree 3 minus the number of vertices that lie in a component of $G^{r/b}$ with at least 2 red vertices. $G$ has $c^3e^{-c}n/6 +O(n^{-0.55})$ vertices of degree 3 w.h.p. (see \cite{frieze2016introduction}).
Thus Lemma \ref{lem:expsizecomp} implies
$|\cT(G)| \geq (1/6-0.03)c^3e^{-c} +O(n^{-0.55}) \geq 0.1{c^3e^{-c}n}$ w.h.p.
\qed

\textbf{Proof of part (c) of Theorem \ref{thm:main}:} Similarly to the derivation of \eqref{upperbound}, any path $P$ of $G$ may cover at most $|V(G)|-\sum_{T\in \cT(G,\gamma^*)}\f(T,\gamma)+2r(G)$ vertices where $r(G)$ is the size of the largest component of $G^{r/b}$; $2r(G)$ is an upper bound on the number of vertices found in the first and last component of $G^{r/b}$ that intersects $P$ and we meet as we traverse $P$ from one of its endpoints to the other. Thus by Lemma \ref{lem:expsizecomp}, $L_{c,n}^P-L_{c,n}\leq 2\cdot \frac{1000\log n}{c}$. On the other hand 
$L_{c,n}^P-L_{c,n} \geq -1$ and therefore $|L_{c,n}^P-L_{c,n}|\leq \frac{2000\log n}{c}+1$.
\qed

\section{The scaling limit of the size of the longest cycle}\label{sec:scalinglimit}
To prove that ${L_{c,n}}/{n}$ has a limit $f(c)$ a.s. we first define a sequence of random variables $L_{c,n,k}$ that can be used to approximate $L_{c,n}$. 
Thereafter we show that for fixed $k\geq 1$ the sequence of random variables  $\{L_{c,n,k}/{n}\}_{n\geq 1}$ has a limit $f_k(c)$ a.s. This will imply that the sequence $\{f_k(c)\}_{k\geq 1}$ can be used to approximate $\lim_{n\to \infty} L_{c,n}/n$. In particular, it will imply that the sequence $\{f_k(c)\}_{k\geq 1}$ is a Cauchy sequence and therefore it has a limit $f(c)$. $f(c)$ will turn out to be the a.s. limit of $L_{c,n}/n$.
For the rest of this section we let $G\sim G(n,c/n)$, $c\geq 20$ constant.

\subsection{Approximating the longest cycle}
Before defining the random variables $L_{c,n,k}$, $k\geq 1$ we express the length of the longest cycle of $G$ as the sum of ``local" quantities. For this we introduce the following notation.

\begin{notation} 
For $v\in V(G)$ we let $\f(v)=0$ if $\gamma^*(v)= black$. Otherwise we let $\f(v)= {\f(T,\gamma^*|_T)}/{|T|} \in [0,1]$ where $T$ is the component of $G^{r/b}$ that contains $v$ and $\gamma^*|_T$ is the restriction of $\gamma^*$ on $T$.  
\end{notation}
Theorem \ref{basic} implies that with probability $1-o(n^{-2})$,
\begin{equation}\label{eq:local}
 L(G) = n -\sum_{T\in \cT(G,\gamma^*)}\f(T,\gamma^*)
 = n -\sum_{T\in \cT(G,\gamma^*)}|T|\cdot \frac {\f(T,\gamma^*)}{|T|}  
 = n -\sum_{v\in V(G)}\f(v).
\end{equation}
We now introduce the sequences of colorings $\{\gamma_k^*(v)\}_{k\geq 1}, v\in V(G)$ based on which we will define the local functions $\f_k:V(G)\to [0,1]$. We later use $\f_k$ to define $L_{c,n,k}$. For $v\in V(G)$ and $k\geq 1$  the coloring $\gamma_k^*(v):N_G^{\leq k}(v) \to\{red,blue,black\}$ is generated as follows. Initially all vertices in $N_G^{\leq k}(v)$ have color black. While there exists a blue or black vertex $u$ in $N_{G}^{<k}(v)$ with fewer than 4 black neighbors then color $u$ red and its black neighbors in  $N_{G}^{< k}(v)$ blue. 

For $k\geq 1$, given the colorings $\gamma_k^*(v),v\in [n]$ we define the function $\phi_k':V(G)\to [0,1]$ as follows.
 $\phi_k'(v)=0$ if $\gamma_k^*(v)= black$. Otherwise we let $\f'(v)={\f(T,\gamma^*_k)}/{|T|}$ where $T$ is the component containing $v$ in the subgraph of $G[N^{\leq k}(v)]$ induced by the $\gamma_k^*(v)$-red and $\gamma_k^*(v)$-blue vertices. Thereafter, given the function $\phi_k'$ we define the function $\f_k:V(G)\to [0,1]$ by $\f_k(v)=0$
if there exists $i\in [k]$ such that $|N^i(v)|\geq 10(ck)^{3i}$ or $G[N^{\leq k}(v)]$ spans a cycle and $\phi_k(v)=\phi_k'(v)$ otherwise. Finally we let
\begin{equation}\label{eq:localversion}
    L_{c,n,k}(G)=n-\sum_{v \in V(G)} \phi_k(v).
\end{equation}  
Equation \eqref{eq:local} implies,
\begin{align}\label{eq:localdiff}
|L(G)-L_{c,n,k}(G)|& \leq \sum_{v\in V(G)} \mathbb{I}(\phi_k(v)\neq \phi(v)) \leq \sum_{v\in V(G)} \mathbb{I}(\phi_k(v)\neq \phi_k'(v) \text { or } \phi_k'(v)\neq \phi(v)).
\end{align}

\begin{lemma}\label{lem:color1}
With probability $1-o(n^{-2})$, 
$$|\{v\in V(G):\phi_k(v)\neq \phi_k'(v) \text{ or } \phi_k'(v)\neq \phi(v)\}|\leq \frac{n}{4k^2}.$$
\end{lemma}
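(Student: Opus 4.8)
The plan is to bound, separately and by a first moment estimate followed by the concentration inequality of Lemma~\ref{lem:AzumaHoeffding}, the three ways a vertex $v$ can be \emph{bad} (meaning $\f_k(v)\neq\f_k'(v)$ or $\f_k'(v)\neq\f(v)$). Since $\f_k(v)$ differs from $\f_k'(v)$ only when $\f_k(v)=0$, which by definition forces $|N^i(v)|\ge 10(ck)^{3i}$ for some $i\le k$ or $G[N^{\le k}(v)]$ to span a cycle, and since $\f_k'(v)=\f(v)$ whenever $v\in V_{black}(G)$ or $v$ lies in a component $T$ of $G^{r/b}$ with $|T|\le k$ (the structural claim below), every bad vertex lies in $A_1\cup A_2\cup A_3=(A_1\setminus A_2)\cup A_2\cup A_3$, where
\[
A_1=\{v:G[N^{\le k}(v)]\text{ spans a cycle}\},\qquad A_2=\{v:|N^i(v)|\ge 10(ck)^{3i}\text{ for some }i\le k\},
\]
\[
A_3=\{v:v\notin V_{black}(G)\text{ and }v\text{ lies in a component of }G^{r/b}\text{ of size }>k\}.
\]
I will show $|A_1\setminus A_2|\le n/(50k^2)$, $|A_2|\le n/(50k^2)$ and $|A_3|\le n/(5k^2)$, each with probability $1-o(n^{-2})$, which gives the lemma since $\tfrac1{50}+\tfrac1{50}+\tfrac15<\tfrac14$.

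\emph{The structural claim.} Its heart is that the peeling producing $\gamma_k^*(v)$ never \emph{over-colours}: by induction on the order in which vertices get the colour red in that peeling, every such vertex lies in $V_{red}(G)$, and every vertex that gets the colour blue lies in $V_{red}(G)\cup V_{blue}(G)$. Indeed the first vertex coloured red has at most three neighbours, hence lies neither in $SC_4(G)$ nor (since every vertex of $V_{black}(G)\cup V_{blue}(G)$ has at least four neighbours in $V_{black}(G)$) in $V_{blue}(G)$; if the next vertex $u$ coloured red lay in $V_{black}(G)\cup V_{blue}(G)$ it would have four neighbours in $V_{black}(G)$, all in $N^{\le k}(v)$ since $u\in N^{<k}(v)$ and all still black by the induction hypothesis, contradicting that $u$ has fewer than four black neighbours; and the neighbours of a red vertex of $\gamma^*$ all lie in $V_{red}(G)\cup V_{blue}(G)$. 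Consequently every vertex of $V_{black}(G)\cap N^{\le k}(v)$ stays black in $\gamma_k^*(v)$. If $v\in V_{black}(G)$ this gives $\f_k'(v)=0=\f(v)$. Otherwise let $T\ni v$ be the component of $G^{r/b}$ containing $v$, with $|T|\le k$; then $T\subseteq N^{\le k-1}(v)$, $N(T)\subseteq N^{\le k}(v)$, and $N(T)\subseteq V_{black}(G)$ since $T$ is a component of $G^{r/b}$. As $N(T)$ stays black in $\gamma_k^*(v)$ and no vertex of $T$ has a neighbour outside $T\cup N(T)$, the peeling of $T$ inside $G[N^{\le k}(v)]$ (with $N^k(v)$ frozen black) is confluent with, hence has the same outcome as, the peeling of $T$ inside $G$; since the red/blue split of $T$ is determined by this peeling — the blue vertices of $T$ being exactly those with a neighbour in $V_{black}(G)$, i.e.\ in $N(T)$ — the colourings $\gamma_k^*(v)$ and $\gamma^*$ agree on $T$, the $\gamma_k^*(v)$-red/blue component of $v$ in $G[N^{\le k}(v)]$ equals $T$ (no spurious merging, as any neighbour of $T$ in $G[N^{\le k}(v)]$ lies in $N(T)\subseteq V_{black}(G)$ and stays black), and so $\f_k'(v)=\f(T,\gamma^*|_T)/|T|=\f(v)$.

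\emph{The three bounds.} Exploring $G$ from $v$, $|N^i(v)|$ is stochastically dominated by the size of generation $i$ of a Galton--Watson process with $\mathrm{Bin}(n,c/n)$ offspring, so $\mathbb E|N^i(v)|\le c^i$ and $\mathbb E|A_2|\le n\sum_{i\le k}c^i/(10(ck)^{3i})\le n/(3000k^2)$, using $c\ge20$. If $v\notin A_2$ then $|N^{\le k}(v)|\le 20(ck)^{3k}=:L_0$, so a cycle of $G[N^{\le k}(v)]$ has length at most $L_0$; since the expected number of cycles of length at most $L_0$ in $G$ is $O_{c,k}(1)$ and, given such a cycle $C$ is present, $\mathbb E|\{v:C\subseteq N^{\le k}(v)\}|\le\max_w\mathbb E|N^{\le k}(w)|+L_0=O_{c,k}(1)$, we get $\mathbb E|A_1\setminus A_2|=O_{c,k}(1)$. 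Deleting all edges at a vertex $w$ in a graph of maximum degree $\log^2n$, in which $|N^{\le k}(w)|\le\log^{2k}n$, changes the $A_2$-membership (resp.\ the $(A_1\setminus A_2)$-membership) of at most $\log^{2k}n$ vertices, so Lemma~\ref{lem:AzumaHoeffding} with $d=2\log^{2k}n$ and $t=n/(100k^2)$, for which the Gaussian term in~\eqref{eq:Azuma} is $o(n^{-8})$ because $n/\log^{4k}n\to\infty$, yields $|A_2|\le n/(50k^2)$ and $|A_1\setminus A_2|\le n/(50k^2)$ each with probability $1-o(n^{-8})$. Finally $|A_3|=\sum_{s>k}X_s$ with $X_s$ as in Lemma~\ref{lem:expsizecomp}; parts (a) and (b) of that lemma give, with probability $1-o(n^{-2})$, $|A_3|\le\frac nc\sum_{s>k}\beta^s/s+O(n^{0.6})$ where $\beta=2ece^{-(1/4+o(1))c}\le0.8$ for $c\ge20$, and a direct calculation shows $\frac nc\sum_{s>k}\beta^s/s+O(n^{0.6})\le n/(5k^2)$ for every $k\ge1$ and all large $n$.

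\emph{The main obstacle} is the structural claim, in particular a careful verification that the local peeling never over-colours and that, for a component $T$ of $G^{r/b}$ with $v\in T$ and $|T|\le k$, the colouring $\gamma_k^*(v)$ both recovers the global red/blue split of $T$ and leaves $v$ in a $\gamma_k^*(v)$-red/blue component equal to $T$. The probabilistic estimates are routine, although the bound on $A_3$ is tight for small $k$ and so genuinely uses the value $\beta<0.8$ of the geometric ratio coming from Lemma~\ref{lem:expsizecomp}(a) rather than a cruder constant.
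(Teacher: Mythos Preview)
Your proof is correct and follows essentially the same three-way decomposition as the paper's own argument: the paper uses sets $X_k'$ (components of $G^{r/b}$ of size $\ge k$), $Y_k$ (vertices within distance $k$ of a short cycle), and $Z_k$ (vertices with an overgrown $i$-neighbourhood), which correspond respectively to your $A_3$, $A_1$, $A_2$.

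The differences are cosmetic. You take the component threshold to be $|T|>k$ rather than $|T|\ge k$, which is slightly sharper and is justified by your observation that $|T|\le k$ already forces $T\subseteq N^{<k}(v)$. Your structural claim is argued via the ``no over-colouring'' induction plus confluence, whereas the paper characterises the locally-black set as a maximal set with a closure property and compares directly; the content is the same. For the cycle part, the paper bounds $|Y_k|$ by first bounding the number of vertices \emph{on} short cycles and multiplying by $\Delta(G)^{2k}$, obtaining $|Y_k|\le n^{0.6}$; you instead bound $|A_1\setminus A_2|$ by noting that a cycle inside a bounded ball has bounded length and that the expected count of such (cycle, nearby vertex) pairs is $O_{c,k}(1)$, then apply Lemma~\ref{lem:AzumaHoeffding}. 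Either route works. Your bound on $|A_3|$ via Lemma~\ref{lem:expsizecomp} is the same computation as the paper's bound on $|X_k'|$, with marginally different constants.
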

\begin{proof}
Let $X_k'$ be the set of vertices that lie in a component of $G^{r/b}$ of size at least $k$, $Y_k$  be the set of vertices that are within distance $k$ from a cycle of length at most $2k$ and $Z_k$ be the set of vertices with $|N^i(v)|\geq 10(ck)^{3i}$ for some $i\leq k$. We begin by showing that 
\begin{align}\label{eq:boundphi}
   |\{v\in V(G):\phi_k(v)\neq \phi_k'(v) \text{ or } \phi_k'(v)\neq \phi(v)\}|\leq |X_k'\cup Y_k\cup Z_k|\leq |X_k'|+|Y_k|+|Z_k|.
\end{align}
For that it is sufficient to show that (i) every vertex $v$ that is assigned the color black by $\gamma^*$ is also assigned the color black by $\gamma^*_k(v)$, thus $\phi(v)=\phi_k(v)=0$ and (ii) 
every vertex in $[n]\setminus (Y_k \cup Z_k)$ that lies in a component of size at most $k-1$ in $G^{r/b}$ satisfies $\phi_k'(v)=\phi(v)$. For $(i)$ note that the set of vertices that is assigned color black by  $\gamma^*_k(v)$ in $N^{<k}(v)$ is the maximal subset $S$ of $N^{<k}(v)$ such that every vertex in $S\cup N(S)$ has at least 4 neighbors in $S\cup N^k(v)$. On the other hand if we let $N^k_{black}(v)=N^k(v) \cap V_{black}(G)$ we have that  the set of vertices that is assigned color black by  $\gamma^*$ in $N^{<k}(v)$ is the maximal subset $S'$ of $N^{< k}(v)$ such that every vertex in $S'\cup N(S')$ has at least 4 neighbors in $S'\cup N^k_{black}(v)$. As $N^k_{black}(v)\subseteq N^k(v)$ we have that $S' \subseteq S$ and (i) follows. 

Now let $v \in [n]$ be a vertex that lies in a component $C$ of size at most $k-1$ in $G^{r/b}$ and $N(C)$ be the neighborhood of the vertices in $C$ in the graph $G$. Then every vertex in $N(C)$ is assigned color black by $\gamma^*$ and thus by $\gamma^*_k(v)$, by (i). Thus the set of black vertices in both colorings $\gamma^*,\gamma^*_k(v)$ that lie in $C$ is the maximal subset $S$ of $V(C)$ such that every vertex in $S\cup N(S)$ has at least 4 neighbors in $S\cup N(C) $. Therefore in both colorings every vertex in $N(C)$ receives color black and no vertex in $C$ receives color black. Thereafter the set of blue vertices with respect to either $\gamma^*$ or $\gamma_k^*(v)$ equals to the set of vertices with at least $4$ neighbors in $N(C)$, call this set $A$. Finally the vertices in $C\setminus A$ receive color red from both $\gamma^*$, $\gamma_k^*(v)$. Hence both $\gamma^*,\gamma^*_k(v)$ restricted to $C\cup N(C)$ are identical and therefore $\phi(v)=\phi_k(v)$.

We now bound $|X_k'|,|Y_k|$ and $|Z_k|$. Lemma \ref{lem:expsizecomp} implies that $|X_k'|\leq \sum_{i\geq k}\frac{n}{20i}0.8^i +O(n^{0.6}) \leq \frac{n}{10k^2}$ with probability $1-o(n^{-2})$. Thereafter let $Y_k'$ be the set of vertices that lie on a cycle of size at most $2k$. Then every vertex in $Y_k$ lies within distance at most $2k$ from a vertex in $Y_k'$ and therefore $|Y_k|\leq |Y_k'| \Delta^{2k}(G)$.
\begin{align*}
    \mathbb{E}(|Y_k'|) \leq \sum_{i=3}^{3k} \binom{n}{i}i! p^i \leq  \sum_{i=3}^{3k}  (np)^i =o(n^{0.5}). 
\end{align*}
In addition, for $v\in [n]$ deleting all the edges incident to $v$ in $G$ may decrease $|Y_k'|$  by at most  $d(v)\leq \Delta(G)$, thus by Lemma \ref{lem:AzumaHoeffding}, $|Y_k'|  \leq n^{0.55}$ with probability $1-o(n^{-2})$. Thereafter in the event   $\Delta(G)\leq \log^2n$ and $|Y_k'|\leq n^{0.55}$ we have that $|Y_k|\leq k|Y_k'|\Delta^{2k}(G) \leq n^{0.6}$. This occurs with probability at least $1-n\binom{n}{\log^2 n}p^{\log^2 n}-o(n^{-2})=1-o(n^{-2}).$ 

Finally, for $v\in [n]$ and $i\leq k$ the expected size of $N^{i}(v)$ is $c^i$. Therefore Markov's inequality implies that $\Pr(|N^i(v)|\geq 10(ck)^{3i}) \leq c^i/(10(ck)^{3i})$ and in extension that $\mathbb{E}(|Z_k|)\leq n \sum_{i=1}^k c^i/(10(ck)^{3i} \leq n/(9c^2k^3)$. Thereafter, for $v\in [n]$ deleting all the edges incident to $v$ in $G$ may decrease $|Z_k|$  by at most  $\Delta^{k}(G)$. Thus by Lemma \ref{lem:AzumaHoeffding}, $|Y_k'|  \leq n/(9c^2k^3)+ n^{0.55}\leq n/10k^2+o(n)$ with probability $1-o(n^{-2})$.

The bounds on $|X_k'|,|Y_k|$ and $|Z_k|$ and \eqref{eq:boundphi} imply,
\begin{align*}
   |\{v\in V(G):\phi_k(v)\neq \phi_k'(v) \text{ or } \phi_k'(v)\neq \phi(v)\}|\leq |X_k'|+|Y_k|+|Z_k| \leq \frac{n}{10k^2}+n^{0.6}+\frac{n}{10k^2}\leq \frac{n}{4k^2}.
\end{align*}
with probability $1-o(n^{-2}).$

\end{proof}

Lemma \ref{lem:color1} and \eqref{eq:localdiff} imply the following.
\begin{lemma}\label{lem:boundapprox}
$|L(G)-L_{c,n,k}(G)| \leq \frac{n}{4k^2}$ with probability $1-O(n^{-2})$.
\end{lemma}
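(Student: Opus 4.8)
The plan is to deduce Lemma \ref{lem:boundapprox} directly from the two ingredients already in hand: the exact ``local'' formula from Theorem \ref{basic} (equivalently \eqref{eq:local} and \eqref{eq:localversion}) together with the counting bound of Lemma \ref{lem:color1}. First I would note that by \eqref{eq:localdiff} we have the pointwise--to--counting reduction
\[
|L(G)-L_{c,n,k}(G)| \leq \sum_{v\in V(G)} \mathbb{I}\big(\phi_k(v)\neq \phi_k'(v) \text{ or } \phi_k'(v)\neq \phi(v)\big),
\]
and the right-hand side is exactly the cardinality of the set bounded in Lemma \ref{lem:color1}. Hence on the event supplied by Lemma \ref{lem:color1}, which holds with probability $1-o(n^{-2})$, the right-hand side is at most $n/(4k^2)$.

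The only subtlety is that \eqref{eq:localdiff} was derived from \eqref{eq:local}, and \eqref{eq:local} itself holds only on the event from Theorem \ref{basic}, which has probability $1-O(n^{-2})$ rather than $1$. So strictly speaking I would intersect the two good events: the event of Theorem \ref{basic} (where $L(G)=n-\sum_v \phi(v)$, hence \eqref{eq:localdiff} is valid) and the event of Lemma \ref{lem:color1} (where the relevant vertex count is at most $n/(4k^2)$). By a union bound this intersection has probability $1-O(n^{-2})-o(n^{-2}) = 1-O(n^{-2})$, and on it both inequalities above are simultaneously valid, giving $|L(G)-L_{c,n,k}(G)|\leq n/(4k^2)$.

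There is essentially no obstacle here; the lemma is a bookkeeping corollary, and the main point to be careful about is simply tracking the probability: the $O(n^{-2})$ in the conclusion comes from Theorem \ref{basic}, while Lemma \ref{lem:color1} contributes only $o(n^{-2})$. Concretely I would write: ``\emph{Proof.} By \eqref{eq:local}, on an event $\mathcal{E}_1$ of probability $1-O(n^{-2})$ we have $L(G)=n-\sum_{v\in V(G)}\phi(v)$, so that \eqref{eq:localdiff} holds on $\mathcal{E}_1$. By Lemma \ref{lem:color1}, on an event $\mathcal{E}_2$ of probability $1-o(n^{-2})$ we have $|\{v:\phi_k(v)\neq\phi_k'(v)\text{ or }\phi_k'(v)\neq\phi(v)\}|\leq n/(4k^2)$. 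On $\mathcal{E}_1\cap\mathcal{E}_2$, combining these with \eqref{eq:localversion} gives $|L(G)-L_{c,n,k}(G)|\leq n/(4k^2)$, and $\Pr(\mathcal{E}_1\cap\mathcal{E}_2)=1-O(n^{-2})$.'' That completes the argument.
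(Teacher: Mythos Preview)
Your proposal is correct and follows exactly the paper's approach: the paper's proof is literally the single sentence ``Lemma \ref{lem:color1} and \eqref{eq:localdiff} imply the following,'' and you have unpacked precisely this, being if anything more careful than the paper in explicitly noting that \eqref{eq:localdiff} relies on the event from Theorem \ref{basic} and hence contributes the $O(n^{-2})$ failure probability.
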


\subsection{The limits of the approximations}
We now let $\mathcal{H}_{k}$ be the set of 
pairs $(H,o_H)$ where $H$ is a rooted tree, $o_H$ is a distinguished vertex of $H$ that is considered to be the root, every vertex in $V(H)$ is within distance at most $k$ from $o_H$ and there are at most $10(ck)^{3i}$ vertices at distance $1\leq i\leq k$ from $o_H$. For $(H,o_H) \in \mathcal{H}_{k}$ let $X_{(H,o_H)}(G)$ be the number of copies of $(H,o_H)$ in $G$. Also let $\phi(H,o_H)$ be equal to the value of $\phi_k(v)$ in the event $(G[N^{\leq k}(v)],v)=(H,o_H)$.
Then,
\begin{equation*}
    L_{c,n,k}(G)=n-\sum_{v \in V(G)} \phi_k(v)
    =n-\sum_{(H,o_H) \in \mathcal{H}_{k}} \phi(H,o_H) X_{(H,o_H)}(G).
\end{equation*} 
For $k\geq 1$ we let 
$$\rho_{c,k}= 1 - \sum_{(H,o_H) \in \mathcal{H}_{k}}  \frac{\phi(H,o_H) c^{|V(H)|-1}}{aut(H,o_H)}.$$
Here by $aut(H,o_H)$ we denote the number of automorphisms of $H$ that map $o_H$ to $o_H$. Then, 
\begin{align}\label{def:rho}
\mathbb{E}\bigg( \frac{L_{c,n,k}(G)}{n} \bigg) &=1-\sum_{(H,o_H) \in \mathcal{H}_{k}}\frac{ \phi(H,o_H) \mathbb{E}(X_{(H,o_H)}(G))}{n} \nonumber
\\&=1-\sum_{(H,o_H) \in \mathcal{H}_{k}} \frac{ \phi(H,o_H)  \binom{n}{|V(H)|} |V(H)|!p^{|E(H)|} (1-p)^{\binom{|V(H)|}{2}-|E(H)|}}{aut(H,o_H)\cdot n} \nonumber
\\&=1-\lim_{n \to \infty} \sum_{(H,o_H) \in \mathcal{H}_{k}} \frac{c^{|V(H)|-1}}{aut(H,o_H)} + O(n^{-0.9}) =\rho_{c,k}+ O(n^{-0.9}). 
\end{align}
\begin{lemma}\label{lem:approxrho}
With probability $1-o(n^{-2})$,
\begin{equation}\label{eq:approxrho}
    \bigg|\rho_{c,k}-\frac{L_{c,n,k}(G)}{n}\bigg|=O(n^{-0.4}).
\end{equation}
\end{lemma}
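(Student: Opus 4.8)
The plan is to prove the concentration statement \eqref{eq:approxrho} via the bounded-differences inequality of Lemma \ref{lem:AzumaHoeffding} applied to the graph-theoretic function $f(G)=L_{c,n,k}(G)$. First I would record that by \eqref{def:rho} we have $\mathbb{E}(L_{c,n,k}(G)/n)=\rho_{c,k}+O(n^{-0.9})$, so it suffices to show that $L_{c,n,k}(G)$ is concentrated around its mean to within $O(n^{0.6})$, say; then dividing by $n$ gives the claimed $O(n^{-0.4})$ error and absorbs the $O(n^{-0.9})$ mean shift. Thus the whole proof reduces to exhibiting a suitable bounded-difference parameter $d=d(n)$ and invoking \eqref{eq:Azuma} with $t=n^{0.6}$ (or any $t$ that is $\omega(\sqrt{n}\,\mathrm{polylog}\,n)$ and $o(n^{0.6}\cdot\text{something})$ — in fact $t=n^{0.55}$ already works and beats $n^{0.6}$), noting that $2\exp(-t^2/(2n(d+1)^2))=o(n^{-2})$ once $d$ is polylogarithmic.

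The key step is the Lipschitz estimate: for a vertex $v\in[n]$ and a graph $G_1$ on $[n]$ of maximum degree at most $\log^2 n$, with $G_2$ obtained from $G_1$ by deleting all edges at $v$, I need $|L_{c,n,k}(G_1)-L_{c,n,k}(G_2)|\leq 0.5 d$ for a suitable polylogarithmic $d$. Since $L_{c,n,k}(G)=n-\sum_{u}\phi_k(u)$ with each $\phi_k(u)\in[0,1]$, it is enough to bound the number of vertices $u$ whose local quantity $\phi_k(u)$ can change when the edges at $v$ are deleted. The function $\phi_k(u)$ is determined entirely by the rooted ball $(G[N^{\leq k}(u)],u)$ of radius $k$ around $u$; hence $\phi_k(u)$ can only change if $v$ lies in $N^{\leq k}(u)$, i.e. if $u\in N^{\leq k}(v)$. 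In a graph of maximum degree at most $\log^2 n$ one has $|N^{\leq k}(v)|\leq 1+\log^2 n+\cdots+\log^{2k}n\leq 2\log^{2k}n$ for $n$ large, so at most $2\log^{2k}n$ vertices $u$ have their $\phi_k$-value altered, each by at most $1$. Therefore $|L_{c,n,k}(G_1)-L_{c,n,k}(G_2)|\leq 2\log^{2k}n$, and we may take $d=4\log^{2k}n$.

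With this choice $d$ is polylogarithmic (recall $k$ is fixed), so $2n(d+1)^2=O(n\log^{4k}n)$ and choosing $t=n^{0.55}$ gives $2\exp(-t^2/(2n(d+1)^2))=2\exp(-\Omega(n^{0.1}/\log^{4k}n))=o(n^{-2})$; the additive $o(n^{-8})$ term in \eqref{eq:Azuma} is also $o(n^{-2})$. Hence $|L_{c,n,k}(G)-\mathbb{E}(L_{c,n,k}(G))|\leq n^{0.55}$ with probability $1-o(n^{-2})$, and combining with $|\mathbb{E}(L_{c,n,k}(G))-n\rho_{c,k}|=O(n^{0.1})$ from \eqref{def:rho} yields $|L_{c,n,k}(G)-n\rho_{c,k}|=O(n^{0.55})$, i.e. $|\rho_{c,k}-L_{c,n,k}(G)/n|=O(n^{-0.45})=O(n^{-0.4})$, as required.

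The main obstacle — really the only point demanding care — is the first sentence of the Lipschitz argument: the claim that $\phi_k(u)$ depends on $G$ only through the rooted radius-$k$ ball at $u$. This needs one to check that the definition of $\phi_k$ (the truncated strong-$4$-core coloring $\gamma_k^*(u)$ on $N^{\leq k}(u)$, the component structure in $G[N^{\leq k}(u)]$, the cutoffs $|N^i(u)|\geq 10(ck)^{3i}$ and the ``$G[N^{\leq k}(u)]$ spans a cycle'' condition) is genuinely a function of the isomorphism type of $(G[N^{\leq k}(u)],u)$ and nothing else, which is immediate from the construction but should be stated. A secondary subtlety is that deleting edges at $v$ can change whether $u\in N^{\leq k}(v)$ in the first place, but this only shrinks the ball of affected vertices, so the bound $|N^{\leq k}_{G_1}(v)|\leq 2\log^{2k}n$ (taken in the denser graph $G_1$) is a valid upper bound on the number of changed $\phi_k$-values in either direction; I would phrase the count using the neighbourhoods in $G_1$ to be safe.
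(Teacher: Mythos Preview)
Your proof is correct and takes essentially the same approach as the paper: concentration via Lemma \ref{lem:AzumaHoeffding} using the $k$-locality of $\phi_k$, combined with the expectation computation \eqref{def:rho}. The only cosmetic difference is that the paper applies Lemma \ref{lem:AzumaHoeffding} to each count $X_{(H,o_H)}$ separately and then union-bounds over the finite set $\mathcal{H}_k$, whereas you apply it once directly to $L_{c,n,k}$; your packaging is arguably the cleaner of the two.
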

\begin{proof}
Fix $(H,o_H) \in \mathcal{H}_{k}$. By Lemma \ref{lem:AzumaHoeffding} we have that $\Pr(|\mathbb{E}(X_{H,o_H}(G))-X_{H,o_H}(G)|\geq n^{0.55})=o(n^{-2})$. As the cardinality of $\mathcal{H}_{k}$ is finite, by the union bound, we have that  $|\mathbb{E}(X_{H,o_H}(G))-X_{H,o_H}(G)|\leq n^{0.55}$ for all 
$(H,o_H) \in \mathcal{H}_{k}$  with probability $1-o(n^{-2})$. Thus,
\begin{equation*}
    \bigg|L_{c,n,k}(G)-n+\sum_{(H,o_H) \in \mathcal{H}_{k}}\phi(H,o_H)\mathbb{E}(X_{H,o_H}(G))\bigg| \leq n^{0.6}
\end{equation*}
with probability $1-o(n^{-2})$.  The above inequality combined with \eqref{def:rho} imply \eqref{eq:approxrho}.
\end{proof}

\begin{lemma}\label{lem:caushy}
For integers $k_2>k_1\geq 1$ we have,  \begin{equation}\label{eq:caushy}
    |\rho_{c,k_1}-\rho_{c,k_2}|\leq \frac{1}{2k_1^2}.
\end{equation}
\end{lemma}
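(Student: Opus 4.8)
The plan is to derive \eqref{eq:caushy} by comparing both $\rho_{c,k_1}$ and $\rho_{c,k_2}$ to the single random quantity $L(G)/n$ for $G\sim G(n,c/n)$, and then exploiting that the left-hand side of \eqref{eq:caushy} does not depend on $n$. Concretely, I would fix $n$ large and apply Lemma \ref{lem:boundapprox} with $k=k_1$ and with $k=k_2$, together with Lemma \ref{lem:approxrho} with $k=k_1$ and with $k=k_2$. Each of these four statements holds with probability $1-O(n^{-2})$, so their intersection holds with probability $1-O(n^{-2})$, which is positive once $n$ is large. Fix a graph $G$ on $[n]$ lying in this intersection; for it we simultaneously have $|L(G)-L_{c,n,k_i}(G)|\leq n/(4k_i^2)$ and $|\rho_{c,k_i}-L_{c,n,k_i}(G)/n|=O(n^{-0.4})$ for $i\in\{1,2\}$.

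Next I would combine these bounds via the triangle inequality:
\begin{align*}
|\rho_{c,k_1}-\rho_{c,k_2}|
&\leq \Big|\rho_{c,k_1}-\tfrac{L_{c,n,k_1}(G)}{n}\Big|
+\Big|\tfrac{L_{c,n,k_1}(G)}{n}-\tfrac{L(G)}{n}\Big|
+\Big|\tfrac{L(G)}{n}-\tfrac{L_{c,n,k_2}(G)}{n}\Big|
+\Big|\tfrac{L_{c,n,k_2}(G)}{n}-\rho_{c,k_2}\Big|\\
&\leq O(n^{-0.4})+\frac{1}{4k_1^2}+\frac{1}{4k_2^2}+O(n^{-0.4})
\leq \frac{1}{2k_1^2}+O(n^{-0.4}),
\end{align*}
where in the last step I use $k_2>k_1\geq 1$, so that $1/(4k_2^2)\leq 1/(4k_1^2)$. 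Since $\rho_{c,k_1}$ and $\rho_{c,k_2}$ are deterministic constants not depending on $n$, while the error term $O(n^{-0.4})$ tends to $0$, letting $n\to\infty$ in the displayed inequality gives $|\rho_{c,k_1}-\rho_{c,k_2}|\leq 1/(2k_1^2)$, which is \eqref{eq:caushy}.

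There is no serious obstacle in this argument: the lemma is essentially a bookkeeping consequence of the approximation estimates already established. The only mildly delicate points are checking that the four high-probability events can be intersected — immediate, since each fails with probability $O(n^{-2})$ — and observing that one may legitimately pass to the limit $n\to\infty$ in the inequality even though the intermediate quantities $L(G)$ and $L_{c,n,k_i}(G)$ are random, precisely because the two endpoints $\rho_{c,k_1},\rho_{c,k_2}$ are not. Granting \eqref{eq:caushy}, the sequence $\{\rho_{c,k}\}_{k\geq 1}$ is Cauchy and hence converges, which is the use that will be made of this lemma in establishing the a.s.\ limit of $L_{c,n}/n$.
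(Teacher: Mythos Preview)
Your proposal is correct and follows essentially the same argument as the paper: apply Lemmas \ref{lem:boundapprox} and \ref{lem:approxrho} for both $k_1$ and $k_2$, intersect the four high-probability events, chain the triangle inequality through $L_{c,n,k_1}(G)/n$, $L(G)/n$, and $L_{c,n,k_2}(G)/n$, and then let $n\to\infty$ using that $\rho_{c,k_1},\rho_{c,k_2}$ are deterministic. The paper's write-up is nearly identical to yours, differing only in that it multiplies through by $n$ before applying the bounds.
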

\begin{proof}
Let $1\leq k_1< k_2$. Lemmas \ref{lem:boundapprox} and \ref{lem:approxrho}  imply,
\begin{align*}
    n|\rho_{c,k_1}-\rho_{c,k_2}|& \leq 
    |n\rho_{c,k_1}-L_{c,n,k_1}(G)|+|L_{c,n,k_1}(G)-L(G)|+|L_{c,n,k_2}(G)-L(G)|+|n\rho_{c,k_2}-L(G)| \nonumber
    \\& \leq O(n^{0.6})+\frac{n}{4k_1^2}+\frac{n}{4k_2^2}+O(n^{0.6}) < O(n^{0.6})+\frac{n}{2k_1^2},
\end{align*}
with probability $1-O(n^{2})$.
Thus $|\rho_{c,k_1}-\rho_{c,k_2}| \leq \frac{1}{2k_1^2}$ with positive probability for sufficiently large $n$. As $\rho_{c,k_1},\rho_{c,k_2}$ are independent of $n$ \eqref{eq:caushy} follows. 
\end{proof}
\eqref{eq:caushy} implies that the sequence $\{\rho_{c,k}\}_{k\geq 1}$ is a Caushy sequence. Therefore it has a limit as $k\to \infty$ which we denote by  $\rho_c$.

\textbf{Proof of part (a) of Theorem \ref{thm:main}}
Define $f:[0,\infty)\to[0,1]$ by $f(c)=\rho_c$ for $c\geq 20$ and $f(c)=\rho_{20}$ for $0\leq c\leq 20$.
Then for $k\geq 2$, lemmas \ref{lem:boundapprox}, \ref{lem:approxrho} and \ref{lem:caushy} imply,
\begin{align*}
    |n\rho_{c}-L_{c,n}|&\leq n|\rho_c-\rho_{c,k}|+ 
    |n\rho_{c,k}-L_{c,n,k}(G)|+|L_{c,n,k}(G)-L_{c,n}(G)|
    \\& \leq n\sum_{i\geq k}\frac{1}{2i^2}+ O(n^{0.6})+\frac{n}{4k^2}\leq \frac{n}{2(k-1)}+ O(n^{0.6})+\frac{n}{4k^2}\leq \frac{2n}{k}, 
\end{align*}
with probability $1-O(n^{-2})$. As $\sum_{i\geq 1}i^{-2}<\infty$ the Borel-Cantelli Lemma implies that 
$|\lim_{n\to \infty} 
(L_{c,n}/{n})-\rho_c|\leq 2/k$ a.s and therefore $\lim_{n\to \infty} L_{c,n}/{n}=\rho_c=f(c)$ a.s. for $c\geq 20$. 

Now let $0<\epsilon\leq 10^{-3}$. To prove that $f$ is continuous it suffices to show that $|f(c)-f(c+\epsilon)|\leq \epsilon$ for $c\geq 20$. Let $G_1\sim G(n,c/n)$,  $G_2\sim G(n,(c+\epsilon)/n)$ and $E=e_1,e_2,...,e_{2\epsilon n}$ be a sequence of $2\epsilon n$ edges where $e_i$ is chosen independently,  uniformly at random from $\binom{[n]}{2}$. Let $G_1^+=G_1\cup E$. Then $G_1,G_2,E$ can be coupled such that $L(G_2)\leq L(G_1^+)$ w.h.p., where $G_1^+$ is the simple graph obtained from $G_1\cup E$ by replacing its multiple edges with the corresponding single edges. 
We may bound $L(G_1\cup E)$ by $L(G_1)$ plus the number of vertices in components of $(G_1)^{r/b}$ that span an endpoint of an edge in $E$. Therefore, by Lemma \ref{lem:expsizecomp},
\begin{align}
\mathbb{E}(L(G_2) &\leq  \mathbb{E}(L(G_1\cup E)) \leq \mathbb{E}(L(G_1))+4\epsilon n\sum_{i=1}^\infty i \cdot \frac{0.8^in/(ci)}{n} \nonumber
\\& \leq \mathbb{E}(L(G_1))+4\epsilon n \cdot \frac{4}{c} \leq \mathbb{E}(L(G_1))+ 0.8\epsilon n. \label{eq:diffpathslength}
\end{align}

$\lim_{n\to \infty} L_{c,n}/{n}=f(c)$ a.s. implies that $\mathbb{E}(L(G_1))=nf(c)+o(n)$ and $\mathbb{E}(L(G_2))=nf(c+\epsilon)+o(n)$. Combining these equalities with \eqref{eq:diffpathslength} gives,
$$|f(c)-f(c+\epsilon)|\leq \bigg| \frac{ \mathbb{E}(L(G_1))}{n}- \frac{ \mathbb{E}(L(G_2))}{n} +o(1)\bigg| \leq 0.8\epsilon+o(1).$$
Hence $|f(c)-f(c+\epsilon)|\leq \epsilon$ as desired. 

\qed

\section{Proof of Theorem \ref{thm:ham}}\label{sec:Ham}

Fix $U \subseteq V_{blue}(G)$ a matching $M$ on  $V_{4,blue}\setminus U$ and let $H=G'[V(G')\setminus U]$. We prove Theorem \ref{thm:ham} in 3 steps. In the first one we decompose $H$ into a graph $H'\subset H$, an edge set $E_1\subset E(H)$ and a vertex set $V_1\subset V(H)$ with the following properties. $|E_1|=\Omega({n}/{\log\log n})$, $|V_1|= O({n}/{\log\log n})$ and given $V_1$,$E(H)\setminus E_1$, $|E_1|$ the set $E_1$ is uniformly distributed over all the sets of edges of size $|E_1|$ that are spanned by  $V(H)\setminus V_1$ and are disjoint from $E(H)\setminus E_1$. Then, by applying the Tutte-Berge formula twice, we find a set of pairwise disjoint vertex paths in $H\cup M$ of size at most $4{n}/{\log^{0.5}n}$ that cover both $V(H)$ and $M$. Finally, using P\'osa rotations we merge these paths into a Hamilton cycle that covers $M$. 

\subsection{Decomposing \texorpdfstring{$H$}{Lg}}

To decompose $H$ we first assign to every edge $e$ of $G$ a $Bernoulli(p')$ random variable $Y_e$ with $p'= 1/c\log\log n$. Then we let $H_1$ be the subgraph of $H$ with edge set $E(H_1)=\{ e\in E(H): Y_e=0\}$ and we reveal $H_1$. Thereafter, given $V_{red}(G)$ we identify $V_{black}(G)$ and let $V_1$ be the set of vertices of $V(H)$ with less than $4$ neighbors in $V_{black}(G)$. Finally we reveal all the edges of $H$ incident to $V_1$, define $H'$ by $V(H')=V(H)$ and $E(H')=E(H_1) \cup \{uv\in E(H): \{u,v\} \cap V_1 \neq \emptyset\}$ and let $E_1=E(H)\setminus E(H')$.  

Given $H', V_1$ and $e_1=|E_1|$ let $\cS(H',V_1,e_1)$
be the set that consists of all the sets of edges $T$ that are spanned by $V(H)\setminus V_1$, do not intersect $E(H')$ and have size $e_1$. Observe that $\Pr(T=E_1|H',V_1, e_1)=0$ for $T\notin \cS(H',V_1,e_1)$. On the other hand for $T\in \cS(H',V_1,e_1)$ we have that $\Pr(T=E_1|H',V_1, e_1)$ is independent $T$. Hence the distribution of $E_1$ is uniform over the elements of $\cS(H',V_1,e_1)$. The sizes of $V_1$ and $E_1$ are given by the following lemma. Its proof is located in Appendix \ref{sec:app:sizev1e1}.

\begin{lemma}\label{lem:sizev1e1}
Let $\cE_{sample}$ be the event that $|V_1|\leq {10n}/{\log\log n}$ and ${n}/{1000\log\log n}\leq |E_1|$. Then, $\Pr(\cE_{sample})=1-o(n^{-2})$.
\end{lemma}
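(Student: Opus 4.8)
The plan is to expose $G$ first, condition on a likely ``good'' event $\mathcal{G}$ after which $V_{black}(G)$, $V(H)$ and the black-degrees are frozen and the only remaining randomness is the independent family $(Y_e)_{e\in E(H)}$ of $\mathrm{Bernoulli}(p')$ variables, and then estimate $|V_1|$ and $|E_1|$ by a first-moment computation together with a bounded-differences concentration inequality applied to the $Y_e$'s. Concretely, let $\mathcal{G}$ be the event that $G$ satisfies: $\Delta(G)\le\log^2 n$; the conclusions of Lemma \ref{lem:expsizecomp} (so $|V_{red}(G)\cup V_{blue}(G)|\le c^3e^{-c}n$); $e(G)\ge cn/2-n^{0.6}$; and $|\{v:\deg_G(v)\le 4\}|\le e^{-c}(1+c+\dots+c^4/24)n+n^{0.6}$. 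By Lemma \ref{lem:expsizecomp}, standard Chernoff/Azuma bounds and the observation preceding Lemma \ref{lem:AzumaHoeffding}, $\Pr(\mathcal{G})=1-o(n^{-2})$, so it suffices to bound $\Pr(\neg\cE_{sample}\mid G)$ for $G\in\mathcal{G}$. Fix such a $G$; then $V_{black}(G)$, $V(H)$ and $d_B(v):=|N_G(v)\cap V_{black}(G)|$ are determined, and $|E(H)|\le e(G)\le n\log^2 n$.

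\emph{Upper bound for $|V_1|$.} Since $V(H)\subseteq V_{black}(G)\cup N(V_{black}(G))$, the defining property of the strong $4$-core gives $d_B(v)\ge 4$ for every $v\in V(H)$, and $v\in V_1$ precisely when fewer than $4$ of these $d_B(v)$ black edges receive $Y_e=0$. If $d_B(v)=4$ this has probability $1-(1-p')^4\le 4p'$; if $d_B(v)=d\ge 5$ it has probability at most $\binom d3(p')^{d-3}\le d^3(p')^{d-3}$. Summing over $v$ and using $\Delta(G)\le\log^2 n$, $p'=1/(c\log\log n)$ and $c\ge 20$ yields $\mathbb{E}[\,|V_1|\mid G\,]\le 4p'n+O((p')^2n)\le \tfrac{n}{4\log\log n}$ for $n$ large.

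\emph{Lower bound for $|E_1|$.} Put $S=\{v\in V_{black}(G):d_B(v)\ge 5\}$. On $\mathcal{G}$ one checks $|E(G[S])|\ge cn/200$: indeed $e(G[V_{black}(G)])\ge e(G)-\Delta(G)\,|V_{red}(G)\cup V_{blue}(G)|\ge cn/2-c^3e^{-c}n\log^2 n$, and deleting the edges meeting the (few) black vertices of black-degree $\le 4$ costs at most $4|\{v:\deg_G(v)\le 4\}|$; for $20\le c\le 190$ the cruder bound $e(G[V_{black}(G)])\ge 2|V_{black}(G)|\ge 1.9n$ already suffices, while for larger $c$ the term $c^3e^{-c}n\log^2 n$ is negligible. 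Every edge $uv\in E(G[S])$ lies in $E(H)$, and conditionally on $Y_{uv}=1$ the vertex $u$ belongs to $V_1$ only if at most $3$ of its remaining $d_B(u)-1\ge 4$ black edges get $Y=0$, an event of probability at most $4p'$; the same holds for $v$. Hence
\[
\Pr\bigl(Y_{uv}=1,\ u\notin V_1,\ v\notin V_1\mid G\bigr)\ \ge\ p'\bigl(1-8p'\bigr),
\]
and summing over $uv\in E(G[S])$ gives $\mathbb{E}[\,|E_1|\mid G\,]\ge p'(1-8p')\,|E(G[S])|\ge (1-o(1))\tfrac{n}{200\log\log n}$.

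\emph{Concentration and conclusion.} Both $|V_1|$ and $|E_1|$ are functions of $(Y_e)_{e\in E(H)}$ (at most $n\log^2 n$ coordinates on $\mathcal{G}$); flipping one $Y_e$ with $e=uv$ changes $|V_1|$ by at most $2$ (only the status of $u,v$ can change) and changes $|E_1|$ by at most $\deg_G(u)+\deg_G(v)\le 2\log^2 n$ (the edges incident to $u$ or $v$ that enter or leave $E_1$). By McDiarmid's bounded-differences inequality,
\[
\Pr\Bigl(|V_1|>\mathbb{E}[|V_1|\mid G]+\tfrac{n}{4\log\log n}\Bigm| G\Bigr)\le e^{-\Omega(n/(\log\log n)^2)}=o(n^{-2}),
\]
\[
\Pr\Bigl(|E_1|<\mathbb{E}[|E_1|\mid G]-\tfrac{n}{400\log\log n}\Bigm| G\Bigr)\le e^{-\Omega(n/(\log^6 n(\log\log n)^2))}=o(n^{-2}).
\]
Thus on $\mathcal{G}$ we get $|V_1|\le \tfrac{n}{2\log\log n}\le\tfrac{10n}{\log\log n}$ and $|E_1|\ge\tfrac{n}{400\log\log n}\ge\tfrac{n}{1000\log\log n}$ with conditional probability $1-o(n^{-2})$, and combining with $\Pr(\mathcal{G})=1-o(n^{-2})$ gives $\Pr(\cE_{sample})=1-o(n^{-2})$. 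The delicate point is the lower bound on $|E_1|$: one must simultaneously retain a $\mathrm{Bernoulli}(p')$ fraction of edges after thinning \emph{and} keep both their endpoints out of $V_1$, which is exactly why we restrict to black vertices of black-degree $\ge 5$ and verify that $V_1$-membership stays unlikely even after conditioning on one incident edge surviving. The first moment for $|V_1|$, the bound $|E(G[S])|=\Omega(cn)$ (split into the bounded-$c$ and growing-$c$ regimes), and the concentration step are all routine.
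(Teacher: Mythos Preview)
Your overall strategy—condition on a likely event $\mathcal{G}$ for $G$, then analyze the independent Bernoulli variables $(Y_e)$ via first moments and McDiarmid—is sound, and the concentration steps are fine. There is, however, a genuine gap in the lower bound for $|E_1|$. You assert that ``deleting the edges meeting the (few) black vertices of black-degree $\le 4$ costs at most $4|\{v:\deg_G(v)\le 4\}|$,'' which tacitly uses that a black vertex with $d_B(v)=4$ satisfies $\deg_G(v)\le 4$. This is false: a black vertex can have blue neighbours (black vertices have no red neighbours, but blue ones are allowed), so $d_B(v)=4$ is compatible with $\deg_G(v)$ arbitrarily large. Your event $\mathcal{G}$ therefore does not force $|E(G[S])|\ge cn/200$ for constant $c$: on $\mathcal{G}$ the only bound you have on black--blue edges is $|V_{blue}|\cdot\Delta(G)\le c^3e^{-c}n\log^2 n$, which for fixed $c\ge 20$ exceeds $n$ once $n$ is large, so $\mathcal{G}$ alone does not rule out $|\{v\in V_{black}:d_B(v)=4\}|$ being a positive fraction of $n$. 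The repair is easy—add to $\mathcal{G}$ an event bounding the number of edges incident to $V_{red}\cup V_{blue}$ (say by $2c^4e^{-c}n$), which does hold with probability $1-o(n^{-2})$—but as written this step fails.

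For comparison, the paper's bound on $|V_1|$ is much shorter than yours: any $v\in V_1$ has lost at least one black edge to the thinning, so $v$ is incident to an edge of $F'=\{e\in E(G):Y_e=1\}$; hence $|V_1|\le 2|E(F')|$, and since $F'\sim G(n,1/(n\log\log n))$ a single Chernoff bound finishes. No conditioning on $G$ and no bounded-differences inequality are needed. For $|E_1|$ the paper also works through a set of ``good'' edges (those whose endpoints have black-degree $\ge 5$) and then uses binomial concentration for the $Y_e$'s rather than McDiarmid; the two routes are close in spirit, and the paper's treatment of the black-degree-$4$ issue is likewise somewhat terse.
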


\subsection{Finding a large 2-matching}
For integers $k,\ell, r$, we say that a graph $F$ has the property $\cP(k,\ell,r)$, equivalently $F\in \cP(k,\ell,r)$, if the following hold. $F$ spans at most $k$ vertex disjoint cycles of length at most $\ell$ and  there does not exist a partition of $V(F)$ into 3 pairwise disjoint sets $U_1,U_2,U_3$ such that $|U_1|> r$, $|U_2|\leq |U_1|$ and every vertex in $U_1$ has at most 1 neighbor in $U_1\cup U_3$.

\begin{lemma}\label{lem:tutte}
Let $F$ be a graph and $\ell,k,r$ be such that $F\in \cP(k,\ell,r)$. Then, for every matching $M$ on $V(F)$ the graph $F\setminus M$ spans a matching $M'$ of size at least $0.5|V(F)|-0.5(r+k+({|V(F)|}/{\ell}))$.
\end{lemma}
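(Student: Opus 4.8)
The plan is to apply the Tutte--Berge formula twice: once to convert the structural hypothesis $F \in \cP(k,\ell,r)$ into a large matching in $F$ alone, and once more to pass from a large matching in $F$ to a large matching in $F \setminus M$, losing only a controlled amount due to the edges of $M$. Recall the Tutte--Berge formula: for any graph $F$, the maximum matching has size $\frac{1}{2}\left(|V(F)| - \max_{S \subseteq V(F)}\big(\mathrm{odd}(F - S) - |S|\big)\right)$, where $\mathrm{odd}(F-S)$ is the number of odd components of $F - S$. So it suffices to show that for every $S \subseteq V(F)$, $\mathrm{odd}((F\setminus M) - S) - |S| \leq r + k + |V(F)|/\ell$.

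First I would handle the deletion of $M$ cheaply. Fix $S$ achieving the maximum for $F\setminus M$. Adding the (at most $|V(F)|/2$) edges of $M$ back can merge components and change parities, but each edge of $M$ with both endpoints outside $S$ reduces the number of odd components by at most $1$; more usefully, I would instead bound things by noting that a matching $M'$ in $F\setminus M$ of size $m$ together with $M$ restricted appropriately is not the cleanest route — rather, the standard trick is: let $M^\star$ be a maximum matching of $F$. The edges of $M^\star \cap M$ number at most $|V(F)|/2$, but we only need to discard the $M^\star$-edges that lie in $M$, and since $M$ is itself a matching these share no endpoints, so removing them from $M^\star$ still leaves a matching, but could be large. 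The efficient argument is via Tutte--Berge directly: if $S$ is optimal for $F\setminus M$ with $\mathrm{odd}((F\setminus M)-S) = q$, take one vertex from each odd component to form $S'$ with $|S'| \le q$; then in $F - (S \cup S')$ every component is covered, i.e. $(F\setminus M) - (S\cup S')$ has a perfect matching on... this is getting intricate. I will instead use the clean two-application form: a maximum matching of $F\setminus M$ has size at least (max matching of $F$) $-\,\nu(M[\text{relevant part}])$, and since each edge of $M$ can obstruct at most one matching edge, the loss is absorbed into the Tutte set. The honest statement I would prove: $\nu(F\setminus M) \ge \nu(F) - t$ where $t$ counts $M$-edges used by the optimal $F$-matching, which is handled because we can reroute.

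The heart of the argument is the first application: showing $\mathrm{odd}(F - S) - |S| \le r + k + |V(F)|/\ell$ for all $S$, using $F \in \cP(k,\ell,r)$. Given $S$, let the odd components of $F - S$ be $C_1, \ldots, C_q$. Components of size $1$ are isolated vertices of $F - S$; I would bound their number using the no-bad-partition condition: set $U_1 = \{$isolated vertices of $F-S\}$, $U_3 = $ everything else outside $S$... but we need $|U_2| \le |U_1|$ with $U_2 \subseteq S$, so if $|S| \ge |U_1|$ there is nothing to prove (the term $|S|$ dominates), and if $|S| < |U_1|$ then taking $U_2 = S$, $U_1 = $ isolated vertices, $U_3 = $ rest, every vertex of $U_1$ has all its neighbors in $S = U_2$, hence at most... no, it could have many neighbors in $S$. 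The property forbids vertices with $\le 1$ neighbor in $U_1 \cup U_3$, i.e. $\ge |V(F)| - |S|$... Let me restate: I would choose $U_1$ = isolated vertices of $F-S$ together with, for short odd components of size $3,5,\dots,\le \ell$, not those (bounded by $k$ via the cycle/disjointness count — actually short odd components number $\le |V(F)|/\ell$ trivially unless size $<\ell$, and those of size $\ge \ell$ number $\le |V(F)|/\ell$). So: odd components of size $\ge \ell$ contribute $\le |V(F)|/\ell$ to $q$; odd components of size $3 \le |C_i| < \ell$ — each such contains an edge, and I'd argue there are $\le k$ of them "plus the cycle budget," or fold them in; and singleton odd components number $\le |S| + r$ by the $\cP(k,\ell,r)$ condition applied with $U_1$ = singletons, $U_3 = V(F)\setminus S \setminus U_1$, $U_2 = S$: each singleton has all neighbors in $S = U_2$, so zero neighbors in $U_1 \cup U_3$, which if $|U_1| > r$ and $|U_2| \le |U_1|$ violates $\cP$ — hence either $|S| > |U_1|$ (good) or $|U_1| \le r$ (good). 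Combining, $q \le r + |S| + k + |V(F)|/\ell$ in all cases, giving $q - |S| \le r + k + |V(F)|/\ell$, which is exactly what Tutte--Berge needs.

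The main obstacle I anticipate is the bookkeeping for odd components of intermediate size (between $2$ and $\ell$) and their interaction with the cycle-count parameter $k$: making precise how "$F$ spans at most $k$ vertex-disjoint cycles of length $\le \ell$" limits the number of small odd components of $F - S$ that contain a cycle, versus those that are trees (which can be split by adding their "spine" to $S$ without increasing the deficiency). The second obstacle is making the passage from $\nu(F)$ to $\nu(F\setminus M)$ genuinely lossless in the relevant parameters — I expect this costs nothing extra because $M$ is a matching and any maximum matching of $F$ that uses $j$ edges of $M$ can be modified, or more simply because deleting a matching $M$ from a graph decreases the Tutte--Berge deficiency bound by at most $0$ once we absorb $M$'s endpoints, so no term beyond $r + k + |V(F)|/\ell$ appears. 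I would write the two applications carefully to confirm the constants match the claimed bound $0.5|V(F)| - 0.5(r + k + |V(F)|/\ell)$.
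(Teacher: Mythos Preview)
Your two-application plan is the wrong route: you cannot pass from $\nu(F)$ to $\nu(F\setminus M)$ without a loss proportional to $|M|$ (take $F$ to be a perfect matching and $M=E(F)$), so the step you ``expect costs nothing extra'' is where the argument actually breaks. The paper applies Tutte--Berge once, directly to $F\setminus M$, and the definition of $\cP(k,\ell,r)$ is tailored for exactly this: a vertex isolated in $(F\setminus M)-S$ has at most one $F$-neighbor outside $S$ (its possible $M$-partner), which matches the ``at most $1$ neighbor in $U_1\cup U_3$'' clause verbatim. So work in $F\setminus M$ from the start and invoke the $\cP$-hypothesis on $F$ only at the end.

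The second gap is the one you correctly flag as your main obstacle: bounding the non-singleton odd components of $(F\setminus M)-S$ that are too small to fall under the $|V(F)|/\ell$ count. Your parenthetical about tree components being splittable ``without increasing the deficiency'' is in fact the solution, not a residual difficulty. Choose $S$ not merely to minimize $|S|-\mathrm{odd}_{F\setminus M}(S)$ but to have \emph{maximum size} among all minimizers. Then no component of $(F\setminus M)[V(F)\setminus S]$ can be a tree: given a leaf $l$ with parent $p$ in such a tree $T$, moving $p$ into $S$ produces the odd component $\{l\}$ and (checking the two parities of $|V(T)|$) at least as many odd components from the rest of $T$ as $T$ itself contributed, so $|S|-\mathrm{odd}$ does not increase while $|S|$ does, contradicting maximality. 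Hence every non-singleton component contains a cycle of $F\setminus M$, hence of $F$, and the cycle hypothesis bounds their number by $k+|V(F)|/\ell$. With $A$ the set of singletons you then finish by splitting into the cases $|A|\le r$ and $|A|>r$ (the latter forcing $|S|\ge|A|$ via the partition clause with $(U_1,U_2,U_3)=(A,S,B)$), essentially as you sketched.
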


\begin{proof}
For a graph $G$ and $U\subset V(G)$ let $odd_G(U)$ be the number of odd components of $G[V(G)\setminus U]$. In addition denote by $\alpha'(G)$ the matching number of $G$. The Tutte-Berge formula states
\begin{equation}\label{eq:TB}
 \alpha'(G)=0.5 \min_{U\subseteq V(G)}(|U|-odd_G(U) +|V(G)|).   
\end{equation}
Let $S\subseteq V(F)$ be a set that minimizes $|S|-odd_{F\setminus M}(S)$ of maximum size, $A$ be the set of isolated vertices in $F[V(F)\setminus S]\setminus M$ and $B=V(F)\setminus (S\cup A)$. Observe that $B$ does not span a tree in $F\setminus M$. Indeed, assume otherwise, that is that $B$ spans a tree $T$. Let $l$ be a leaf of $T$ and $p$ the parent of $l$ (in the case that $T$ consists of a single edge $e$ we let $p,l$ be the endpoints of $e$). If $|V(T)|$ is even then $T\setminus \{p\}$ spans at least one odd component, namely the one consisting of the vertex $l$. Else if $|V(T)|$ is odd then  $T\setminus \{p\}$ spans at least one odd component in addition to $\{l\}$, hence at least 2. Therefore  with $S'=S\cup \{p\}
$,
$$|S'|-odd_{F\setminus M}(S') \leq (|S|+1) - (odd_{F\setminus M}(S)+1)= |S|-odd_{F\setminus M}(S)$$  contradicting the maximality of $S$. Therefore every component spanned by $B$ contains a cycle. $F\in \cP(k,\ell,r)$ implies that there exist at most $k$ cycles of length at most $\ell$ in $F$, hence in $F\setminus M$ and therefore $B$ spans at most $k+{|B|}/({\ell+1})\leq k+{|V(F)|}/{\ell}$ many components. In this case \eqref{eq:TB}  and the choice of $S$ imply that 
\begin{equation}\label{eq:matchA}
    \alpha'(F\setminus M)= 0.5|V(F)|+0.5|S|-0.5odd_{F\setminus M}(S)
    \geq  0.5|V(F)|+0.5|S| -0.5(|A|+k+(|V(F)|/\ell)).
\end{equation}
If $|A|\leq r$ then \eqref{eq:matchA} gives that $\alpha'(F\setminus M)\geq 0.5|V(F)|-0.5( r+k+(|V(F)|/\ell))$. On the other hand if $|A|>r$ then every vertex in $A$ has no neighbor in $A\cup B$ in $F\setminus M$ and therefore it has at most one neighbor in $A\cup B$ in $F$. Hence, as $F$ belongs to $ \cP(k,\ell,r)$ (with $(A,S,B)=(U_1,U_2,U_3)$) we have that $|S| \geq |A|$. In this case \eqref{eq:matchA} gives that $\alpha'(F\setminus M)\geq 0.5|V(F)|-0.5(k+(|V(F)|/\ell))$.
\end{proof}

We prove the following lemma in Appendix \ref{sec:app:find2match}.

\begin{lemma}\label{lem:find2match}
Let $U' \subseteq V_{blue}(G)\setminus U$. Then both
$H'$ and $H'[V(H')\setminus U']$ belong to $\cP({n}/{\log^{0.5}n}, \log^{0.5} n, 0 )$ with probability $1-o(n^{-2})$. In addition $H'\cup M$ does not span a set of $n/\log^{0.5}n$ vertex disjoint cycles of length at most $\log^{0.5}n$ with probability $1-o(n^{-2})$.
\end{lemma}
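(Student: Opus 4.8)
The plan is to verify the three required structural properties of $H'$ (and its vertex-deleted subgraph) separately, exploiting the key fact established in the previous subsection: conditioned on $H'$, $V_1$ and $e_1=|E_1|$, the edge set $E_1$ is uniformly distributed over $\cS(H',V_1,e_1)$, and by Lemma \ref{lem:sizev1e1} we have $|E_1|\geq n/(1000\log\log n)$ while $|V_1|\leq 10n/\log\log n$ with probability $1-o(n^{-2})$. So $H'$ is ``missing'' a large, uniformly random sparse set of edges on $V(H)\setminus V_1$, which heuristically means $H'$ cannot contain any of the forbidden dense local configurations. I would condition on $\cE_{sample}$ throughout and work in the probability space of $E_1$ given $(H',V_1,e_1)$.

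\textbf{Step 1: few short cycles.} I would bound the expected number of cycles of length at most $\log^{0.5}n$ in $H'$. A cycle of length $\ell$ in $H'$ is in particular a cycle of length $\ell$ in $H\subseteq G$, so I can first crudely bound the number of short cycles in $G$ itself: $\mathbb{E}(\text{cycles of length}\leq \log^{0.5}n \text{ in } G)\leq \sum_{\ell=3}^{\log^{0.5}n}(np)^\ell = \sum_\ell c^\ell = O(c^{\log^{0.5}n}) = n^{o(1)}$. Then an application of Lemma \ref{lem:AzumaHoeffding} (deleting the edges at a vertex changes the count by at most $\Delta(G)\leq\log^2 n$, so $d=O(\log^2 n)$) gives concentration: with probability $1-o(n^{-2})$ the number of short cycles in $G$, hence in $H'$, is at most $n^{0.1}\ll n/\log^{0.5}n$. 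This gives that $H'$ (and any subgraph, so also $H'[V(H')\setminus U']$) spans at most $n/\log^{0.5}n$ vertex-disjoint short cycles. For the last assertion, $H'\cup M$ adds only a matching; a family of $t$ vertex-disjoint cycles of length $\leq\log^{0.5}n$ in $H'\cup M$ uses $\leq t$ edges of $M$, and deleting one edge of $M$ from each such cycle that uses one leaves $t$ vertex-disjoint paths, each a cycle-minus-an-edge, inside $H'$; summing their lengths shows $H'$ contains $\geq t(\text{something})$... more simply: each such cycle either lies entirely in $H'$ or contains an edge of $M$, and the number of the former is $\leq n^{0.1}$ while the number of the latter is $\leq |V(M)|/1 \le n$ but they are vertex-disjoint so at most the number of such cycles is bounded — here I would instead argue directly that $t$ vertex-disjoint cycles of length $\le \log^{0.5} n$ cover $\ge 3t$ vertices, and among them at most $n^{0.1}$ avoid $M$ entirely; a cycle meeting $M$ in $j\ge1$ edges, upon deleting those $j$ edges, yields $j$ paths in $H'$, so $H'$ contains a linear forest covering $\ge 3t - 3n^{0.1}$ vertices with $\ge t-n^{0.1}$ components whose endpoints are matched by $M$ — and if $t\ge n/\log^{0.5}n$ this contradicts... actually the cleanest route is: such a configuration would force $H$ to contain $\ge t - n^{0.1}$ pairwise vertex-disjoint paths of length $\le \log^{0.5}n$ joining the two endpoints of distinct $M$-edges, and a union bound over the uniformly random $E_1$ kills this, exactly as in the appendix. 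I would defer the precise bookkeeping to Appendix \ref{sec:app:find2match}.

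\textbf{Step 2: the expansion property $\cP(\cdot,\cdot,0)$ with $r=0$.} This is the crux. With $r=0$ the forbidden configuration is a partition $V(H')=U_1\cup U_2\cup U_3$ with $U_1\neq\emptyset$, $|U_2|\leq|U_1|$, and every vertex of $U_1$ having at most one neighbor in $U_1\cup U_3$ — equivalently, $U_1$ has at most $|U_1|$ ``outgoing'' ends into $U_2$ beyond a near-perfect matching inside $U_1$, so $U_1$ sends at most $2|U_1|$ edges toward $U_2$ while $|U_2|\le |U_1|$: a strong non-expansion statement. I would split into small $|U_1|$ and large $|U_1|$. For $|U_1|$ small (say $\le n/\log\log n$), the relevant vertices almost all lie outside $V_1$ and must have $\le 1$ neighbor in $U_1\cup U_3 \supseteq V(H')\setminus U_2$, i.e. $\ge \deg_{H}(v)-|U_2|-1$ of their $H$-edges go... no: each $v\in U_1$ has at most one neighbor outside $U_2$, so at least $\deg_{H'}(v)-1$ neighbors in $U_2$, forcing $|U_2|\ge$ something — but this is where the uniform randomness of $E_1$ enters: a set $U_1$ of size $s$ with $\ge 2$-fold ``contraction'' into a set $U_2$ of size $\le s$ would mean $U_1\cup U_2$ spans $\ge$ (many) edges in $H$ but $H'$ retained almost all of them, an event of probability $\le \binom{n}{s}\binom{n}{s}(\text{prob. }E_1\text{ avoids the relevant edges})$, which one shows is $o(n^{-2})$ summed over $s$. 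For $|U_1|$ large, I would use that $G(n,c/n)$ with $c\ge 20$ is itself a good edge-expander on linear sets and that passing to $H'$ only removes a sparse random set plus edges at $V_1$, so the expansion survives. Deleting a further set $U'\subseteq V_{blue}(G)\setminus U$ only shrinks the graph and can be absorbed into the union bound. I expect \textbf{this step to be the main obstacle}: getting the $r=0$ (no slack) expansion requires carefully using that the missing edge set $E_1$ is genuinely random of size $\Omega(n/\log\log n)$, not just that $H'$ is a subgraph of $G$ — a deterministic subgraph of $G$ need not have this property.

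\textbf{Step 3: assembling the three claims.} Combining Steps 1 and 2, with a union bound over the (polynomially many choices, or an explicit $o(n^{-2})$ bound) of the extra deleted set $U'$ and over the short-cycle count, I conclude that $H'\in\cP(n/\log^{0.5}n,\log^{0.5}n,0)$, that $H'[V(H')\setminus U']\in\cP(n/\log^{0.5}n,\log^{0.5}n,0)$, and that $H'\cup M$ has no $n/\log^{0.5}n$ vertex-disjoint short cycles, each with probability $1-o(n^{-2})$; intersecting these $o(1)$-many bad events preserves $1-o(n^{-2})$. The routine but lengthy verification of the probability estimates in Steps 1 and 2 — in particular the union bound over all candidate triples $(U_1,U_2,U_3)$ using the law of $E_1$ from Lemma \ref{lem:sizev1e1} — is what I would relegate to Appendix \ref{sec:app:find2match}.
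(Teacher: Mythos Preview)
Your Step 2 contains the central gap. You propose to condition on $(H',V_1,e_1)$ and then use the uniform law of $E_1$ over $\cS(H',V_1,e_1)$ to rule out bad partitions of $V(H')$. But once $H'$ is fixed by conditioning, its structure is deterministic; the residual randomness of $E_1$ cannot establish anything about $H'$ itself. (That conditional randomness is saved for the rotation--extension phase in Section~\ref{sec:Ham}.3, where one needs fresh edges to close cycles, not for this lemma.) What actually drives the $r=0$ conclusion is a \emph{deterministic} feature of the decomposition that you did not use: by the very definition of $V_1$ and $H'$, every vertex of $H'$ has at least four $H'$-neighbours in $V_{black}(G)$. Indeed, if $v\in V_1$ then all of its $H$-edges are restored in $H'$, and if $v\notin V_1$ then already $H_1\subseteq H'$ supplies four such neighbours. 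Since $V_{black}(G)\subseteq V(F)$ for $F\in\{H',\,H'[V(H')\setminus U']\}$, any $v\in U_1$ with at most one $F$-neighbour in $U_1\cup U_3$ is forced to have at least three neighbours in $U_2$, already in $G$. This gives the small-$|U_1|$ case (up to $n/(1.25c^3)$) by a first-moment bound in $G$; the medium and large ranges are handled by first-moment bounds in $G\setminus E_1$, using only that each pair is a non-edge of $G\setminus E_1$ with probability at most $1-p+pp'$ (because $E_1\subseteq\{e:Y_e=1\}$). None of these computations are done conditionally on $H'$. Without the minimum-degree-four fact, even the case $|U_1|=1$ is not handled and $r=0$ is unreachable.

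Your treatment of short cycles in $H'\cup M$ also misses the intended device. Each edge of $M$ abbreviates a path inside a component of $G^{r/b}$; re-expanding those edges sends a cycle of length at most $\log^{0.5}n$ in $H'\cup M$ to a cycle of length at most $\log^{0.6}n$ in $G$, unless some $M$-edge involved is ``heavy'' (its component has more than $\log^{0.1}n$ vertices). So the short-cycle count in $H'\cup M$ is at most the number of cycles of length at most $\log^{0.6}n$ in $G$ plus the number of large components of $G^{r/b}$, both of which are $o(n/\log^{0.5}n)$ with the required probability via Lemma~\ref{lem:expsizecomp} and Lemma~\ref{lem:AzumaHoeffding}. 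The combinatorial reductions you sketch (deleting $M$-edges and counting leftover paths, or invoking $E_1$) do not produce this bound.
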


By combining lemmas \ref{lem:tutte}  and \ref{lem:find2match} we prove the following one.
\begin{lemma}\label{lem:paths}
There exists a set of vertex disjoint paths $\cP$ in $H'\cup M$ of size at most $4n/\log^{0.5} n$ that cover both $V(H')$ and $M$ with probability $1-o(n^{-2})$.  
\end{lemma}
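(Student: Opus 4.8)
The plan is to assemble the path cover from two matchings produced by Lemma~\ref{lem:tutte} and then to delete a small number of edges so as to break up the cycles that arise.

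First I would apply Lemma~\ref{lem:tutte} with $F=H'$ and the given matching $M$: by Lemma~\ref{lem:find2match} we may assume $H'\in\cP(n/\log^{0.5}n,\log^{0.5}n,0)$, which yields a matching $M_1\subseteq E(H')\setminus M$ with $|M_1|\ge \tfrac12|V(H')|-n/\log^{0.5}n$ (using $|V(H')|\le n$). Next, set $F_2:=H'[V(H')\setminus V(M)]$; since $V(M)\subseteq V_{blue}(G)\setminus U$, Lemma~\ref{lem:find2match} also gives $F_2\in\cP(n/\log^{0.5}n,\log^{0.5}n,0)$, so a second application of Lemma~\ref{lem:tutte} --- to $F_2$ with the matching $M_1\cap E(F_2)$ --- produces a matching $M_2\subseteq E(F_2)$ with $|M_2|\ge \tfrac12|V(F_2)|-n/\log^{0.5}n=\tfrac12|V(H')|-|M|-n/\log^{0.5}n$. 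Because $F_2$ contains no vertex of $V(M)$, $M_2$ is edge-disjoint from both $M$ and $M_1$ and, crucially, vertex-disjoint from $V(M)$; since $M_1,M,M_2$ are matchings this forces every vertex to have degree at most $2$ in $J:=M_1\cup M_2\cup M$. Hence the components of $J$ (regarded as a spanning subgraph of $H'\cup M$ on vertex set $V(H')$) are vertex-disjoint paths and cycles, and $M\subseteq J$.

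It then remains to count the components of $J$ and to eliminate its cycles without touching $M$. As $M_1,M_2,M$ are pairwise edge-disjoint, $|E(J)|=|M_1|+|M_2|+|M|\ge|V(H')|-2n/\log^{0.5}n$ --- the $|M|$ terms cancel --- so $J$ has $|V(H')|-|E(J)|+(\#\text{cycles of }J)\le 2n/\log^{0.5}n+(\#\text{cycles of }J)$ components. The cycles of $J$ are vertex-disjoint, so those on more than $\log^{0.5}n$ vertices number at most $|V(H')|/\log^{0.5}n\le n/\log^{0.5}n$, while those on at most $\log^{0.5}n$ vertices form a family of vertex-disjoint cycles of length at most $\log^{0.5}n$ in $H'\cup M$ and hence, by the last assertion of Lemma~\ref{lem:find2match}, number fewer than $n/\log^{0.5}n$; thus $J$ has fewer than $4n/\log^{0.5}n$ components. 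Finally, no cycle of $J$ can use only edges of $M$ (a matching is acyclic), so each cycle of $J$ contains an edge of $M_1\cup M_2$; deleting one such edge from every cycle of $J$ produces a spanning linear forest $L\subseteq H'\cup M$ with $M\subseteq L$ and with exactly as many components as $J$. The components of $L$ are the desired collection $\cP$: vertex-disjoint paths in $H'\cup M$, covering $V(H')$ and every edge of $M$, of size at most $4n/\log^{0.5}n$. All the properties of $H'$, $F_2$ and $H'\cup M$ invoked above hold with probability $1-o(n^{-2})$ by Lemma~\ref{lem:find2match}, giving the stated probability. The steps are routine once the set-up is right; the one point needing genuine care is keeping $\Delta(J)\le 2$ (handled by routing $M_2$ off $V(M)$) in combination with the cycle count, where the long cycles are bounded trivially on vertex-count grounds and the short cycles are precisely what the third clause of Lemma~\ref{lem:find2match} is designed to control.
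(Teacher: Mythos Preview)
Your argument is correct and follows essentially the same route as the paper: obtain $M_1$ in $H'\setminus M$ via Lemma~\ref{lem:tutte}, then obtain $M_2$ on a suitably restricted vertex set so that $J=M\cup M_1\cup M_2$ has maximum degree $2$, and finally bound the short cycles using Lemma~\ref{lem:find2match} and the long cycles by a vertex count. The only (cosmetic) difference is the set removed for the second application: the paper removes $V_M$, the vertices already of degree $2$ in $M\cup M_1$, whereas you remove all of $V(M)$; since $V_M\subseteq V(M)\subseteq V_{blue}(G)\setminus U$ and $|V_M|\le 2|M|=|V(M)|$, both choices satisfy the hypothesis of Lemma~\ref{lem:find2match} and yield the same lower bound $|M_1|+|M|+|M_2|\ge |V(H')|-2n/\log^{0.5}n$, so the remainder of the counting is identical.
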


\begin{proof}
Let $M_1$ be a maximum matching in $H'\setminus M$, $M_1^+=M\cup M_1$,
$V_M$ be the set of vertices that are incident to 2 edges in $M_1^+$  and $M_2$ be a maximum matching in $H'[V(H')\setminus V_M]\setminus M_1^+$. To construct the set $\cP$, let $\cC$ be the set of components induced by $M_1^+\cup M_2$. Remove from every cycle in $\cC$ an edge that does not belong to the matching $M$ and let $\cP$ be the set of the resulting $|\cC|$ paths.

Let $\cE$ be the event that $H', H'[V(H')\setminus V_M]\in \cP(n/\log^{0.5}n,\log^{0.5}n,0)$  and $H'\cup M$ does not span a set of $n/\log^{0.5}n$ vertex disjoint cycles of length at most $\log^{0.5}n$.  In the event $\cE$, by Lemma \ref{lem:tutte}, $|M_1|\geq 0.5|V(H')|- n/(\log^{0.5} n)$ and $|M_2|\geq 0.5(|V(H')|-|V_M|)- n/(\log^{0.5} n)$.  Therefore the components in $\cC$ span  at least $|V(H')|-2n/\log^{0.5}n$ edges in total. In addition, as every cycle in $\cC$ belongs to $H'\cup M$, $\cC$ spans at most $n/\log^{0.5}n$ cycles of length less than $\log^{0.5}n$ and $2n/\log^{0.5}n$ cycles in total. This implies that $\cP$ is a set of vertex disjoint paths that covers $|V(H')|$ and spans at least $|V(H')|-4n/\log^{0.5}n$ edges. Thus
$$|V(H')|-4n/\log^{0.5}n\leq \sum_{P\in \cP}|E(P)|=\sum_{P\in \cP}|V(P)|-1 =|V(H')|-|\cP|.$$
Hence  $|\cP|\leq 4n/\log^3 n$ with probability $\Pr(\cE)=1-o(n^{-2})$.  
\end{proof}

\subsection{Merging the paths into a Hamilton cycle}
Let $\cP=\{P_1',P_2',...,P_{\ell}'\}$ be a minimum size set of vertex disjoint paths that cover both $M$ and $V(H')$. For $i\in [\ell]$ let $v_{i,1},v_{i,2}$ be the two endpoints of $P_i'$ (in the case that $P_i'$ is a path of length $0$, equivalently it consists of a single vertex $v_i$,  then $v_{1,i}=v_i=v_{2,i}$). Then $P_1= P_1',v_{1,2}v_{2,1},P_2',v_{2,2}v_{3,1},....,$ $P_\ell'$ is a Hamilton path of $H'\cup R$ where $R=\{v_{i,2}v_{i+1,1}:i\in [\ell-1]\}$. We transform $P_1$ into a Hamilton cycle of $H=H'\cup E_1$ in $\ell$ iterations of an extension-rotation procedure. Given a Hamilton path $P=v_1,e_1,v_2,e_2,...,v_i,e_i,v_{i+1},...,e_{n'-1},v_{n'}$ we say that the path $P'=v_1,e_1,....,v_i,v_iv_{n'},v_{n'},e_{n'-1},v_{n'-1},....,e_{i+1},v_{i+1}$ is obtained by a P\'osa rotation with $v_1$ being the fixed endpoint. We call $e_i$ the deleted edge, $v_1v_{n'}$ the inserted edge, $v_{i}$ the pivot vertex and $v_{i+1}$ the new endpoint. We say that the P\'osa rotation that transforms $P$ to $P'$ is admissible w.r.t. to the pair of edge sets ($W$, $W'$) if the inserted edge belongs to $W$ and the deleted edge does not belong to $W'$.   

Let $\ell=|\cP|$. For $i\in [\ell]$ we let $E_i'$ be the set of edges in $E_1$ that have been revealed during the first $i-1$ iterations, thus $E_1'=\emptyset$. We start the $i^{th}$ iteration with a Hamilton path $P_i$ in $H'\cup E_i'\cup R$ that spans $\ell-i$ edges of $R$. We then proceed by performing all sequences of P\'osa rotations that fix the vertex $v$ and are admissible w.r.t. ($E(H')$,$M$) (each such sequence starts with the path $P_i$). Let $End_i$ be the set of distinct new endpoints obtained and for $w\in End_i$ let $P_{w,i}$ be a path from $v$ to $w$  obtained by the P\'osa rotations. Thereafter, for $w\in End_i$ we perform all sequences of P\'osa rotations that fix the vertex $w$ and are admissible w.r.t. ($E(H')$,$M$) (each such sequence starts with the path $P_{w,i}$) and we let $End_{w,i}$ be the set of distinct new endpoints obtained.

For $w\in End_i,z\in End_{w,i}$ we let $P_{\{w,z\},i}$ be a path from $w$ to $z$ obtained by the above procedure. If there exists a path $P_{\{w,z\},i}$ that contains fewer edges in $R$ than $P_i$ then we let $P_{i+1}$ be such a path that spans $\ell-i-1$ edges in $R$, set $E_{i+1}'=E_i'$ and proceed to the next iteration. Else, we reveal the edges in $E_1\setminus E_i'$ one by one until we identify an edge $w,z$ with $w\in End_i$, $z\in End_{w,i}$. Once such an edge is identified, we let $H_i$ be the Hamilton cycle with edge set $E(P_{\{w,z\},i})\cup \{\{w,z\}\}$. If $i=\ell$ then we output $H_\ell$. Else, $H_i$ spans $\ell-i-1$ edges in $R$, we remove such an edge from $H_i$ and let $P_{i+1}$ be the resultant Hamilton path. If at any point we have revealed all the edges in $E_1$ and have not constructed $H_\ell$ yet, then we terminate the algorithm.

For $e\in E_1$ set $X_e=1$ if $e$ is not revealed by the above algorithm or when $e$ is revealed it is used to construct some Hamilton cycle $H_i,i\leq \ell$.  Set $X_e=0$ otherwise. All  P\'osa rotations performed by the above algorithm are admissible w.r.t. $(E(H'),M)$, thus they never delete an edge from $M$ or add an edge from $R$ to a path while they are performed. Here we are using that $\cP$ is of minimum size, hence $R\cap E(H')=\emptyset$. So in the event $\sum_{e\in E_1} X_e\geq \ell$, $H_\ell$ is a Hamilton cycle of $H'\cup E_1\cup R= H\cup R$ that spans at most $|R|-\ell=0$ edges in $R$ and all of the edges of $M$.

Let $\cE_{exp}$ be the event the following hold: (i) every set $W\subset [n]$ of size at most $12$ spans at most $|W|+2$ edges in $H'$, (ii) for every $S\subset[n]$ and $T\subseteq[n]\setminus S$ with $5\leq |S|\leq n/c^5$ and $|T|\leq 2|S|$ we have that the set $S\cup T$ spans fewer than $1.5|S|+|T|$ edges in $H'$ and (iii) for every set $S\subset [n]$ satisfying $n/c^9\leq|S|\leq 10^{-30}n$ we have that $|N_{H'}(S)\setminus V_{blue}(G)|\leq 2|S|$.  In the analysis of the above algorithm we make use of the following lemma. 
\begin{lemma}\label{lem:expansion} 
$\Pr(\cE_{exp})= 1-O(n^{-2})$.
\end{lemma}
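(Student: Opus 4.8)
The plan is to prove each of (i), (ii), (iii) by the first moment method: bound the probability of a failure by the expected number of witnesses to it and apply Markov. Two observations reduce everything to $G\sim G(n,c/n)$. First, $H'\subseteq G$, so $H'$ spans at most as many edges as $G$ on any vertex set and $N_{H'}(S)\subseteq N_G(S)$ for every $S$; hence (i) and (ii), which bound edge densities from above, follow from the same statements for $G$. Second, $H'$ contains all but a $O(1/(c\log\log n))$ fraction of the edges of $G[V(H')]$, so neighbourhoods of large sets in $H'$ are comparable to those in $G$; and, by Lemma~\ref{lem:expsizecomp}(d), we may assume $|V_{red}(G)\cup V_{blue}(G)|\le c^3e^{-c}n$, which is tiny for $c\ge 20$ — this controls the correction coming from the ``$\setminus V_{blue}(G)$'' in (iii). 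We also use that each black vertex keeps at least $4$ neighbours inside $V_{black}(G)\subseteq V(H')$, i.e. $H'$ has minimum degree $4$ on its black part.

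For (i): the expected number of $W$ with $|W|=w\le 12$ spanning $\ge w+3$ edges is $\sum_{w=5}^{12}\binom{n}{w}\binom{\binom{w}{2}}{w+3}p^{w+3}=O(n^{-3})$, since each term is $O(n^{w-(w+3)})$. For (ii): a violating pair $(S,T)$ yields a single set $W=S\cup T$ with $|W|\le 3|S|\le 3n/c^5$ on which $G$ spans $\ge 1.5|S|+|T|=|W|+\tfrac12|S|\ge\tfrac76|W|$ edges; I would union bound over $W$, separating $|W|=O(1)$ (subsumed by the estimate for (i)) from $15\le|W|\le 3n/c^5$, where for $c\ge 20$ the expectation $\binom{|W|}{2}p$ is a minuscule fraction of $\tfrac76|W|$, so the per-set probability is $(\mathrm{small})^{|W|}$ and the sum over $|W|$ is dominated by its first term, of size $n^{-\Omega(1)}$. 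For (iii): I would show w.h.p. no $S$ with $n/c^9\le|S|\le 10^{-30}n$ has at most $2|S|$ neighbours outside $V_{blue}(G)$; for the larger sets in this range this is the standard first-moment expansion estimate $\sum_{|S|}\binom{n}{|S|}\binom{n}{2|S|}(1-p)^{|S|(n-3|S|)}$, while for the smaller sets one instead argues that such an $S$ together with $N_{H'}(S)\setminus V_{blue}(G)$ would be a set on which $H'$ is too dense — each black vertex of $S$ contributing degree $\ge 4$ — contradicting (ii); the removal of the at most $c^3e^{-c}n$ blue and red vertices only shifts the thresholds by a lower-order amount.

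The step I expect to be the real work is carrying out these first-moment sums so that they are $o(n^{-2})$ uniformly over the whole range of set sizes in the statement. For (ii) the delicate regime is sets of size proportional to $n/c^5$, where the number of candidate sets $W$ is traded off against a large-deviation probability; for (iii) it is the smaller sets, where the crude ``guess the neighbourhood'' union bound is too weak and one must instead invoke (ii) and the degree-$4$ property, interleaving the two estimates at the right threshold. The ``$\setminus V_{blue}(G)$'' bookkeeping in (iii) is a second, lesser, source of friction, handled throughout via Lemma~\ref{lem:expsizecomp}. This is precisely why the numerical constants ($1.5$, $c^5$, $c^9$, $10^{-30}$, and the degree threshold $4$) and the hypothesis $c\ge 20$ enter the statement.
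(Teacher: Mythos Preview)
Your handling of (i) and (ii) is essentially the paper's: it too uses $H'\subseteq G$ and a first-moment union bound. The paper keeps $S$ and $T$ separate in the union bound for (ii) rather than merging into a single set $W$ with $\tfrac76|W|$ edges; this retains the sharper edge count $1.5|S|+|T|$, which matters at the top of the range, but the method is the same.

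The real gap is in your plan for (iii). Your ``smaller sets'' argument does not produce a contradiction with (ii). If $|N_{H'}(S)\setminus V_{blue}(G)|\le 2|S|$ and every vertex of $S$ has at least $4$ $H'$-neighbours in $V_{black}(G)\subseteq S\cup\big(N_{H'}(S)\setminus V_{blue}(G)\big)$, all you can extract is that $S\cup T'$ (with $T'=N_{H'}(S)\setminus V_{blue}(G)$, $|T'|\le 2|S|$) spans at least $2|S|$ edges: summing the degree-$4$ condition over $S$ gives $2e(S)+e(S,T')\ge 4|S|$, hence $e(S\cup T')\ge e(S)+e(S,T')\ge 2|S|$. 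But (ii) only forbids $\ge 1.5|S|+|T'|$ edges, which can be as large as $3.5|S|$; so there is no contradiction, and the interleaving you anticipate does not work.

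The paper avoids this entirely by a case split on $c$ that you overlooked. For $c\le 1000$ one has $n/c^9\ge 10^{-27}n>10^{-30}n$, so the range $[n/c^9,10^{-30}n]$ in (iii) is \emph{empty} and there is nothing to prove. For $c>1000$ the direct first-moment expansion bound covers the entire range at once: Lemma~\ref{lem:expsizecomp} gives $|V_{blue}(G)\cup V_{red}(G)|\le c^3e^{-c}n\le n/c^9\le 0.1|S|$, so the ``$\setminus V_{blue}(G)$'' correction is negligible, and then
\[
\sum_{s=n/c^9}^{10^{-30}n}\binom{n}{s}\binom{n}{2.1s}(1-p+pp')^{s(n-3.1s)}
\]
is $O(n^{-2})$ because the bracketed term is at most $c^{O(1)}e^{-0.99c}$, tiny for $c>1000$. (Note the use of $1-p+pp'$ rather than $1-p$, to account for edges of $G$ with $Y_e=1$ that are absent from $H'$.) So the difficulty you expected --- having to switch arguments mid-range --- simply does not arise.
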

\begin{proof} As $H'\subseteq G$ we have,
$$\Pr(\neg (i))\leq \sum_{s=4}^{12}  \binom{n}{s} \binom{0.5s^2}{s+3}p^{s+3}=O(n^{-2}).$$
In addition,
\begin{align*}
\Pr(\neg (ii))&\leq
\sum_{s=5}^{\frac{n}{c^9}} \sum_{t=0}^{2s}  \binom{n}{s+t} \binom{0.5(s+t)^2}{1.5s+t}p^{1.5s+t}
\leq \sum_{s=5}^{\frac{n}{c^9}} \sum_{t=0}^{2s}  \bfrac{en}{s+t}^{s+t} \bfrac{0.5e(s+t)^2p}{1.5s+t}^{1.5s+t}  
\\&\leq O(n^{-2}) + \sum_{s=\log^2 n}^{\frac{n}{c^9}} \sum_{t=0}^{2s}  \bfrac{0.5e^2np}{1.16}^{s+t} \bfrac{0.5e(s+t)p}{1.16}^{0.5s} 
\\& \leq O(n^{-2}) + \sum_{s=\log^2 n}^{\frac{n}{c^9}} 2s  \bfrac{e^2c}{2.32}^{3s} \bfrac{3e}{2.32c^9}^{0.5s}
 \leq O(n^{-2}) + \sum_{s=\log^2 n}^{\frac{n}{c^9}} s \bfrac{3e^{13}}{2.32^7c^3}^{0.5s} = O(n^{-2}). 
\end{align*}
For $c\leq 1000$, as $n/c^9> 10^{-30}n$, we have that $\Pr( \cE_{exp})=1-O(n^{-2})$. Thereafter for $c>1000$ and $s\geq n/c^9$ Lemma \ref{lem:expsizecomp}  implies that $|V_{blue}(G)\cup V_{red}(G)|\leq c^3e^{-c}n\leq n/c^9 \leq 0.1s$ with probability $1-O(n^{-2})$. In addition by construction each edge $e$ spanned by $V_{black}(G)$ does not belong to $H'$ only if (i) $e\notin E(G)$ or  (ii) $e\in E(G)$ and $Y_e=1$, hence with probability at most $1-p+pp'$ independently. Thus,
\begin{align*}
\Pr(\neg \cE_{exp})&\leq O(n^{-2})+\sum_{s=\frac{n}{c^9}}^{10^{-30}n}  \binom{n}{s}\binom{n}{2.1s}(1-p+pp')^{s(n-3.1s)}
\\& \leq O(n^{-2})+\sum_{s=\frac{n}{c^9}}^{10^{-30}n}  \bfrac{en}{s}^s\bfrac{en}{2.1s}^{2.1s}e^{-(1+o(1))ps\cdot 0.999n}
\\&\leq  O(n^{-2})+ \sum_{s=\frac{n}{c^9}}^{10^{-30}n}  \bigg[5\bfrac{n}{s}^{3.1s}e^{-0.99c}\bigg]^s\leq O(n^{-2})+ \sum_{s=\frac{n}{c^9}}^{10^{-30}n} \bigg[ c^{30}e^{-0.99c}\bigg]^s=O(n^{-2}). 
\end{align*}
\end{proof}

Theorem \ref{thm:ham} follows from Lemma \ref{lem:Haml}.
\begin{lemma}\label{lem:Haml} 
 $\sum_{e\in E_1} X_e\geq \ell$ with probability $1-O(n^{-2})$.
\end{lemma}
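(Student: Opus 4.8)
The plan is to show that, with probability $1-O(n^{-2})$, the extension--rotation algorithm preceding the lemma terminates by outputting the Hamilton cycle $H_\ell$; by the definition of the $X_e$'s this is exactly the event $\sum_{e\in E_1}X_e\ge\ell$. (Indeed the algorithm fails only if in some iteration it reveals every still-unrevealed edge of $E_1$ without finding a usable one, in which case $\sum_{e\in E_1}X_e$ equals the number of edges of $E_1$ used in earlier iterations, which is at most $\ell-1$; and if it terminates, the at most $\ell$ edges of $E_1$ it used together with the edges it never revealed account for at least $\ell$ ones among the $X_e$, since the argument below also shows only $o(|E_1|)$ edges of $E_1$ are revealed-but-unused.) We work throughout on the event $\cE_0$ that $\cE_{exp}$, $\cE_{sample}$, the conclusions of Lemmas \ref{lem:expsizecomp}, \ref{lem:find2match} and \ref{lem:paths}, and $\Delta(G)<\log^2 n$ all hold; by Lemmas \ref{lem:expansion} and \ref{lem:sizev1e1} and a union bound $\Pr(\cE_0)=1-O(n^{-2})$. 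On $\cE_0$ we have $|V_1|\le 10n/\log\log n$, $\ell=|\cP|\le 4n/\log^{0.5}n$, $|R|=\ell-1$, $|V_{blue}(G)\cup V_{red}(G)|\le c^3e^{-c}n$, and $|E_1|\ge n/(1000\log\log n)$, so in particular $\ell=o(|E_1|)$ and $|V_1|+|R|+\ell=o(n)$.

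\emph{Step 1: rotations produce quadratically many boosters.} The heart of the argument is the deterministic claim that there is an absolute constant $\beta>0$ such that, on $\cE_0$, for every $E'\subseteq E_1$ with $|E'|<\ell$ and every Hamilton path $P$ of $H'\cup E'\cup R$ containing all edges of $M$ the following holds. Let $\mathrm{End}(P)$ be the set of endpoints obtainable from $P$ by sequences of P\'osa rotations admissible w.r.t.\ $(E(H'),M)$ that fix one endpoint of $P$, and for $w\in\mathrm{End}(P)$ let $\mathrm{End}_w(P)$ be the set obtained by continuing with admissible rotations fixing $w$. Then $|\mathrm{End}(P)|\ge\beta n$, and $|\mathrm{End}_w(P)|\ge\beta n$ for all but $o(n)$ of the $w\in\mathrm{End}(P)$; consequently the booster set
\[
 \cB(P)=\bigl\{\{w,z\}\in\binom{V(H)\setminus V_1}{2}\setminus E(H')\ :\ w\in\mathrm{End}(P),\ z\in\mathrm{End}_w(P)\bigr\}
\]
has size at least $\beta' n^2$ for an absolute constant $\beta'>0$. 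This is the standard P\'osa rotation argument: if $S$ is the set of endpoints reachable by a family of admissible rotations fixing a given endpoint, then every vertex of $N_{H'}(S)\setminus S$ is, on the current Hamilton path, adjacent to a vertex of $S$, giving $|N_{H'}(S)|\le C|S|$ for an absolute constant $C$ -- here $C$ absorbs the loss from forbidding deletions of $M$-edges (a factor at most $2$, since $M$ is a matching) and from the at most $2(|R|+|E'|)=o(n)$ edges of $P$ not in $E(H')$. The multi-scale expansion supplied by $\cE_{exp}$ -- clauses (i) and (ii) controlling the number of edges spanned by sets of size $\le 12$ and $\le n/c^9$, and clause (iii) giving $|N_{H'}(S)\setminus V_{blue}(G)|\ge 2|S|$ for $n/c^9\le|S|\le 10^{-30}n$ -- combined with $|V_1|=o(n)$ and the bound on $|V_{blue}(G)|$, then forces $|S|=\Omega(n)$ for every such $S$ obtained by exhausting the admissible rotations; applying this once to the first-level and (for each fixed $w$, discarding the $o(n)$ exceptional $w$) once to the second-level rotations yields the stated bounds, and $|\cB(P)|\ge\tfrac12|\mathrm{End}(P)|\cdot\min_w|\mathrm{End}_w(P)|-O(n)\ge\beta'n^2$ since only $O(n)$ of the relevant pairs can lie in $E(H')\subseteq E(G)$ or meet $V_1$.

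\emph{Step 2: sprinkling $E_1$.} We may assume the edges of $E_1$ are revealed in a uniformly random order, and that each is examined only once (the algorithm's re-examination of previously revealed unused edges can only accelerate it, so the number of \emph{fresh} reveals it makes is dominated by that of this clean variant). Run the algorithm on $\cE_0$. When it is about to reveal its $(t{+}1)$-st edge of $E_1$, the history so far -- hence the current Hamilton path $P$ and the set $\cB(P)$ of Step 1 -- is determined by $H'$, $V_1$, $R$ and the $t$ previously revealed edges; since $E_1$ is uniform over $\cS(H',V_1,|E_1|)$, conditionally on this history the $(t{+}1)$-st revealed edge is uniform over the still-available slots in $\binom{V(H)\setminus V_1}{2}\setminus E(H')$, of which there are at least $\binom{n}{2}-o(n^2)$; hence it lies in $\cB(P)$ with probability at least $(\beta'n^2-t)/\binom{n}{2}\ge\beta''$ for an absolute constant $\beta''>0$ (using $t\le|E_1|=o(n^2)$). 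Thus the number of edges among the first $\lfloor|E_1|/2\rfloor$ revealed that fall into the booster set of the then-current path stochastically dominates $\mathrm{Bin}(\lfloor|E_1|/2\rfloor,\beta'')$. Each such booster completes one iteration of the algorithm -- it either shortens the current Hamilton path by one edge of $R$, or closes it into a Hamilton cycle of $H'\cup E_1\cup R$ from which one edge of $R$ is deleted to form the next path, or (in the $\ell$-th iteration, when no edge of $R$ remains) produces $H_\ell$ -- and the algorithm runs for exactly $\ell$ iterations, so at most $\ell$ boosters are ever needed. Since $\ell\le 4n/\log^{0.5}n\le\tfrac{\beta''}{4}|E_1|$ for all large $n$ (because $\log^{0.5}n\gg\log\log n$), a Chernoff bound gives $\Pr(\mathrm{Bin}(\lfloor|E_1|/2\rfloor,\beta'')<\ell)\le e^{-\Omega(|E_1|)}=e^{-\Omega(n/\log\log n)}=o(n^{-2})$. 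On the complementary event the $\ell$-th booster occurs within the first $\lfloor|E_1|/2\rfloor<|E_1|$ reveals, so $E_1$ is never exhausted, the algorithm outputs $H_\ell$, and since all rotations were admissible $H_\ell$ uses no edge of $R$ and all edges of $M$, i.e.\ it is the required Hamilton cycle and $\sum_{e\in E_1}X_e\ge\ell$. Intersecting with $\cE_0$ yields probability $1-O(n^{-2})$.

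The main obstacle is Step 1. Although it follows the classical P\'osa/Fenner--Frieze rotation scheme, two points need care. First, $H'$ does \emph{not} have minimum degree $\ge 2$: a blue vertex, or a vertex of small $G$-degree all of whose edges into $V_{black}(G)$ were deleted, may become (nearly) isolated in $H'$, so one must genuinely use the three clauses of $\cE_{exp}$ together with the fact that the set of such bad vertices, like $V_1$ and $V_{blue}(G)$, is $o(n)$ (resp.\ a tiny constant fraction). Second, the admissibility constraint forbidding deletions of $M$-edges must be propagated through all the rotations, which, $M$ being a matching, costs only constant factors in the expansion estimates but must be done carefully to keep $|N_{H'}(S)|\le C|S|$.
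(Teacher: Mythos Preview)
Your overall architecture (expand endpoint sets via P\'osa rotations, then sprinkle $E_1$) is the same as the paper's, and Step~2 is essentially correct and matches the paper's argument. The gap is in Step~1.

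\textbf{You have misread the construction of $H'$.} You write that ``$H'$ does not have minimum degree $\ge 2$: a blue vertex, or a vertex of small $G$-degree all of whose edges into $V_{black}(G)$ were deleted, may become (nearly) isolated in $H'$.'' This is false, and the whole decomposition is designed precisely to prevent it. The set $V_1$ consists of those vertices with fewer than $4$ $H_1$-neighbours in $V_{black}(G)$, and for every $v\in V_1$ \emph{all} edges of $H$ incident to $v$ are put back into $H'$. Since every vertex of $V(H)\subseteq V_{black}(G)\cup V_{blue}(G)$ has at least $4$ neighbours in $V_{black}(G)$ in $G$ (hence in $H$), every vertex of $V(H)$ has at least $4$ neighbours in $V_{black}(G)$ in $H'$. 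This is the paper's observation $(*)$, and it does the real work: from any endpoint $u$ at least three of these four black $H'$-neighbours are available as pivots, and because they lie in $V_{black}(G)$ the deleted edge is never in $M$, so the rotation is automatically admissible. Without $(*)$ you cannot even rule out $|End_i|=1$.

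\textbf{The ``factor~$2$ because $M$ is a matching'' reasoning is wrong.} The admissibility constraint does not merely inflate the P\'osa bound by a constant; it can in principle kill arbitrarily many rotations. What is actually true is that every $u\in N_{H'}(End_i)$ for which the rotation through $u$ is inadmissible must be incident to an edge of $M$, hence $u\in V_{blue}(G)$. So the correct conclusion is $|N_{H'}(End_i)\setminus V_{blue}(G)|\le 2|End_i|$, which is exactly the hypothesis of clause~(iii) of $\cE_{exp}$; this is what forces $|End_i|>10^{-30}n$. You quote clause~(iii) in the right form, but the sentence ``$|N_{H'}(S)|\le C|S|$ with $C$ absorbing the $M$-loss'' is not a valid route to it.

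\textbf{The small-set regime is not handled.} Clauses (i) and (ii) of $\cE_{exp}$ are edge-count statements, not neighbourhood bounds, so invoking them requires showing that $End_i\cup Pivot_i$ spans many edges. The paper gets this from $(*)$: each endpoint has $\ge 3$ $H'$-neighbours among the pivots and each pivot has $\ge 2$ $H'$-neighbours among the endpoints, yielding $\ge 1.5|End_i|+|Pivot_i\setminus End_i|$ edges in $End_i\cup Pivot_i$; together with $|Pivot_i\setminus End_i|\le 2|End_i|$ and a short ad~hoc argument for $|End_i|=4$ this contradicts (i) for $|End_i|\le 4$ and (ii) for $5\le |End_i|\le n/c^{9}$. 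Your sketch never establishes any edge lower bound of this type, so clauses (i)--(ii) are not actually usable as you invoke them.

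Finally, the ``all but $o(n)$ of the $w$'' hedge is unnecessary: once you have $(*)$, the same argument gives $|End_{w,i}|\ge 10^{-30}n$ for \emph{every} $w\in End_i$, with no exceptional set.
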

\begin{proof}
Let $\cE$ be the event that the events $\cE_{sample}$ and $\cE_{exp}$ occur, and there exists a set of at most 
$4n/\log n$ vertex disjoint paths in $H'\cup M$ that cover both $M$ and $V(H')$. Lemmas \ref{lem:sizev1e1}, \ref{lem:paths} and  \ref{lem:expansion} imply that $\Pr(\cE)=1-O(n^{-2})$. 

Let $P$ be any Hamilton $u$-$v$ path in $H'\cup E_1\cup  R$. Recall that $u$ has at least 4 neighbors in $V_{black}(G)$ in $H'$. At most 1 of those neighbors precedes $u$ on $P$. Also as these neighbors belong to $V_{black}(G)$ they are not incident to $M$ which is spanned by $V_{blue}(G)$. Therefore there are at least 3 admissible P\'osa rotations w.r.t $(E(H'),M)$ that can be performed on $P$ and fix $v$. If none of the corresponding deleted edges belongs to $R$, then the corresponding, at least $3$, pivot vertices are adjacent to $u$ and the corresponding new endpoint in $H'$. Call this observation $(*)$.

Let $i\in [\ell]$ and $P_i,v,End_i,\{End_{w,i}:w\in End_i\},\{P_{w,i}:w\in End_i\}$, $\{P_{\{w,z\},i}:w\in End_i,z\in End_{w,i}\}$ be as described earlier. Assume that at iteration $i$ we do not perform a P\'osa rotation where the deleted edge belongs to $R$. Let $Pivot_i$ be the set of pivot vertices that we meet while constructing the set $End_i$ (starting from $P_i$). $(*)$ implies that, in $H'$, every vertex in $End_i$ is adjacent to at least 3 vertices in $Pivot_i$ and every vertex in $Pivot_i$ is adjacent to at least 2 vertices in $End_i$. It follows that the set $End_i\cup Pivot_i$ spans at least $1.5|End_i|+|Pivot_i\setminus End_i|$ many edges in $H'$. If $4\leq |End_i|\leq n/c^9$, by considering a first time a vertex in $Pivot_i$ is used as a pivot vertex, every vertex in $Pivot_i$ has a neighbor on $P_i$ that belongs to $End_i$, hence $|Pivot_i\setminus End_i|\leq 2|End_i|$. In the special case that $|End_i|=4$, let $v,u$ be the endpoints of $P_i$, $End_i=\{u,u_1,u_2,u_3\}$ where $u_3$ is the vertex further from $u$ on $P_i$ and $w_j$ be the vertex preceding $u_j$ on $P_i$ for $j=1,2,3$. $(*)$ states that there are at least 3 admissible P\'osa rotations w.r.t $(E(H'),M)$ that can be performed on $P_{i}$ and fix $v$. As $End_i=\{u,u_1,u_2,u_3\}$ and no two of these P\'osa rotations result in a pair of paths with the same endpoints we have that $vw_i\in E(H')$ for $i=1,2,3$. Let $P_{i,j}$ be the path from $v$ to $u_j$ that can be obtained by a single P\'osa rotation from  $P_i$. Observe that on both $P_{i,1}$, $P_{i,2}$ the vertex $w_3$ precedes $u_3$ (as we traverse them starting from $v$). Once again, as $End_i=\{u,u_1,u_2,u_3\}$, $(*)$ implies that $u_1w_3,u_2w_3\in E(G)$. Thus $w_3\in Pivot_i\setminus End_i$ has $4$ neighbors in $End_i$. It follows that  $Pivot_i\cup End_i$ has size $s\in [4,12]$ and spans at least $1.5|End_i|+|Pivot_i\setminus End_i|+1=s+3$ many edges in  $H'$. 

Partition $N_{H'}(End_i)$ to $N_1\uplus N_2$ where $N_1$ is the set of vertices in $N_{H'}(End_i)$ that have a neighbor on $P_i$ who belongs to $End_i$. Then $|N_1|\leq 2|End_i|$. Let $u\in N_2=N_{H'}(End_i)\setminus N_1$, say $u \in N_{H'}(w)$ with $w\in End_i$.  As none of the neighbors of $u$ on $P_i$ belong to $End_i$, the P\'osa rotation that inserts to $P_{w,i}$ the edge $uw$ and deletes an edge incident to $u$ is not admissible w.r.t $(E(H'),M)$. Thus $u$ is incident to an edge in $M$. As every edge in $M$ is spanned by $V_{blue}(G)$ we have that $u\in V_{blue}(G)$. It follows that
$$|N_{H'}(End_i)\setminus V_{blue}(G)|= |N_1|\leq 2|End_i|.$$
In the event $\cE$, the event $\cE_{exp}$ occurs. By taking $S=End_i$, $T=Pivot_i\setminus End_i$ and $W=End_i\cup Pivot_i$ at the definition of $\cE_{exp}$ the above imply that in the event $\cE$ we have that $|End_i|\geq 10^{-30}n$ and (similarly) $|End_{w,i}|\geq 10^{-30}n$ for $w\in End_i$. Hence at iteration $i$ there exists a set of at least $(0.5+o(1))10^{-60}n^2$ pairs $\{w,z\}\subset V(H)\setminus V_1$ such that during iteration $i$ the Hamilton path $P_{\{w,z\},i}$ is generated. Here we are using that in the event $\cE$ the set $V_1$ spans $o(n^2)$ pairs of  vertices. Thus for every edge $e\in E_1$ that is revealed at iteration $i$ we have that $X_e=1$ with probability at least $(1+o(1))10^{-60}$ independently of the identity of the edges in $E_1$ that are revealed beforehand. It follows that, 
\begin{align*}
\Pr\bigg(\sum_{e\in E_1} X_e < \ell\bigg)\leq \Pr\bigg(Bin\bigg(\frac{n}{1000\log\log n},(1+o(1))10^{-60}\bigg)\leq \frac{4n}{\log^{0.5}n}\bigg|\cE \bigg)+\Pr(\neg \cE) =O(n^{-2}).
\end{align*} 
\end{proof}

\section{Concluding Remarks}
We have shown how one can identify a longest cycle in $G\sim G(n,c/n)$ and proved that $\lim_{n\to \infty} L(G)/n$ converges to a constant $f(c)$ a.s. for $c\geq 20$. In addition we determined the probability that $G$ is weakly pancyclic. Our proofs rely on structural properties of the strong $4$-core of the binomial random graphs that hold with high probability. This motivates the further study of the strong $k$-core of $G(n,p)$ and in particular determining $np_k$, where $p_k$ is the threshold of appearance of a non-empty strong $k$-core for $k\geq 3$ in $G(n,p)$. We believe that one can extend the range for which Theorem \ref{thm:main} holds to $c> np_3$ via the study of the strong $3$-core.

\subsection*{Acknowledgment}  
We would like to thank the reviewers for their helpful comments and remarks.

\bibliographystyle{plain}
\bibliography{bib}

\begin{appendix}
\section{Proof of Lemma \ref{lem:sizev1e1}}\label{sec:app:sizev1e1}

\begin{proof}
Let $F'$ be the subgraph of $G$ induced by $\{e\in G: Y_e=1\}$. Then
$F'\sim G(n,1/n\log\log n)$. Therefore,
\begin{align*}
    \Pr\bigg(|V_1|> \frac{10n}{\log\log n}\bigg)
    &\leq \Pr\bigg(|E(F')|> \frac{5n}{\log\log n}\bigg)
    \\&\leq \Pr\bigg(Bin\bigg(n,\frac{1}{n\log\log n} \bigg)> \frac{5n}{\log\log n} \bigg)+o(n^{-2})=o(n^{-2}). 
\end{align*}
Let $\cE$ be the event that $H$ spans at least $0.25cn$ edges.  Lemma \ref{lem:expsizecomp} states that $|V_{red}(G)\cup V_{blue}(G)|\leq c^3e^{-c}n\leq 0.01n$ with probability $1-o(n^{-2})$. As every edge spanned by $V_{black}(G)$ belongs to $H$,
\begin{align*}
    \Pr(\neg \cE)&\leq  \binom{n}{0.01n}\Pr\bigg(Bin\bigg(\binom{0.99n}{2},\frac{c}{n}\bigg)<0.25cn\bigg)+o(n^{-2})\leq 2^n e^{-\frac{0.49^2\cdot 0.495cn}{2}}+o(n^{-2})=o(n^{-2}).
\end{align*}
Furthermore, let $E_5'=E_5\cap E(H)$ and $E_2'$ be the set of edges are incident to vertices of degree at least 2 in $F'$. Observe that every edge of  $E_5'\setminus E_2'$ belongs to $E_1$. Thus $|E_1| \geq {n}/{1000\log\log n}$ if $|E_5'| \geq {n}/{200\log\log n}$ and  $|E_2'|\leq {n}/{400\log\log n}$. In the event $\cE$ we have that $|E_5|\geq |E(H)|-4n\geq  |E(H)|-0.2cn \geq  0.05cn$. It follows that,
\begin{align*}
   \Pr\bigg(|E_5'| < \frac{n}{200\log\log n} \bigg) 
   \leq \Pr\bigg(Bin\bigg(0.05cn,\frac{1}{c\log\log n}\bigg) \leq \frac{n}{200\log\log n}\bigg |\cE \bigg)+o(n^{-2}) =o(n^{-2}).
\end{align*}
For upper bounding $E_2$, let $X_i$ be the number of vertices of degree $i$ in $F'$. By Lemma \ref{lem:AzumaHoeffding} we have that $X_i\leq n \binom{n}{i}(pp')^i +n^{0.6}$ for $i\geq 0$ with probability $1-o(n^{-2})$. In addition,
$$\Pr(\Delta(F')\geq \log^2n)\leq \Pr(\Delta(G)\geq \log^2n) \leq n \Pr(Bin(n,2\log n/n)\geq \log^ 2n)=o(n^{-2}).$$
Hence, $|E_2'| \leq \sum_{i= 2}^{\log^2 n} i |X_i|\leq 
\sum_{i= 2}^{\log^2 n} n 2^{i+1}(\log\log n)^{-i}+ n^{0.6} \leq  {n}/{400\log\log n}$ with probability $1-o(n^{-2})$. It follows that $|E_1|\geq |E_5'|-|E_2'|\geq n/(1000\log\log n)$ with probability $1-o(n^{-2})$.

\end{proof}

\section{Proof of Lemma \ref{lem:find2match}}\label{sec:app:find2match}

\begin{proof}
Let $Z$ and $Z^+$ respectively be the maximal number of vertex disjoint cycles of length at most $\log^{0.5}n$ in $H'\cup M$ and at most $\log^{0.6}n$ in $G$ respectively. Say a cycle in $H'\cup M$ is heavy if it spans an edge in $M$ that corresponds to a path in a component of $G^{r/b}$ with more than $\log^{0.1}n$ vertices. Let $Z_H$ be the number of heavy cycles in $H'\cup M$ and $X_{\geq \log^{0.1}n}$ be the number of components of $G^{r/b}$ of size at least $\log^{0.1}n$.  Then $Z\leq Z^+ +Z_H\leq Z^+ + X_{\geq \log^{0.1}n}$. 
Thereafter using that $Z^+$ is bounded by the number of cycles of length at most $\log^{0.6}n$ and Lemma \ref{lem:expsizecomp} we have,
$$\mathbb{E}(Z) \leq \sum_{i=3}^{\log^{0.6}n} \binom{n}{i} \frac{(i-1)!}{2}p^i + n(0.8)^{\log^{0.1}n} \leq \sum_{i=3}^{\log^{0.6}n} (np)^i + n(0.8)^{\log^{0.1}n}\leq \frac{n}{\log n}.$$
By adding/deleting a single edge $Z$ may increase/decrease by at most 1. Thus, by Lemma \ref{lem:AzumaHoeffding}, $\Pr(Z\geq n/\log^{0.5}n) =o(n^{-2})$. As both graphs $H'$ and $H'[V(H')\setminus U']$ are subgraphs of $H'\cup M$ we have that none of these 3 graphs contains a union of $n/\log^{0.5}n$ vertex disjoint cycles of length at most $\log^{0.5}n$.

Given the above, to prove that Lemma \ref{lem:find2match} it suffices to prove that with probability $1-o(n^{-2})$ the following hold:
\begin{itemize}
    \item[(P1)] There does not exists a pair of sets $S,T$ of size $|S|=|T|\leq {n}/{1.25c^3}$ and every vertex in $S$ has at least 3 neighbors in $T$ in $G$.
    \item[(P2)] There do exist sets $S,R\subset V(G)$ of size $|R|\leq |S|\in [{n}/{1.25c^3},0.3n]$ such every vertex in $S$ has at most $1$ neighbor not in $R\cup V_{blue}(G)\cup V_{red}(G)$ in $G\setminus E_1$.
    \item[(P3)] There does not exists a set $U\subset [n]$ of size $0.3n$ such that  $G[U]\setminus E_1$ is a matching.
\end{itemize}
Indeed let $F\in \{H', H'[V(H')\setminus U]\}$ and assume that $V(F)$ has a partition into  pairwise disjoint sets $U_1,U_2,U_3$ such that $|U_2|\leq |U_1|$ and in $F$ every vertex in $U_1$ has at most 1 neighbor in $U_1\cup U_3$, hence in $U_1$. As every vertex in $U_1$ has at most one neighbor in $U_1\cup U_3$ and at least 4 neighbors in $V_{black}(G)\subseteq V(F)$ it must have at least 3 neighbors in $U_2$. Thus, as $F=G[V(F)]\setminus E_1$, if (P1) and (P3) hold then  $n/(1.25c^3)\leq |U_1|\leq 0.3n$. Thereafter as $V_{black}(G)\subseteq V(F)$, every vertex in $U_1$ has at most $1$ neighbor in $(U_1\cup U_3)\cap V_{black}(G)$ in $F$ hence at most $1$ neighbor in $G\setminus E_1$ that does not lie in $U_2\cup V_{red}(G)\cup V_{blue}(G)$. Thus if (P1) and (P3) hold then (P2) does not hold. 

We now bound $\Pr(\text{P1}),\Pr(\text{P2})$ and $\Pr(\text{P3})$.

\begin{align*}
\Pr(\text{P1})\leq \sum_{s=3}^{\frac{n}{1.25c^3}} \binom{n}{2s}\binom{2s}{s} \bigg(\binom{s}{3}p^3\bigg)^s \leq \sum_{s=3}^{  \frac{n}{1.25c^3}} \bfrac{en}{2s}^{2s} 2^{2s} \bfrac{(sp)^3}{6}^s  \leq  \sum_{s=3}^{  \frac{n}{1.25c^3}} \bfrac{e^2 c^3 s}{6n}^{s} =o(n^{-2}).
\end{align*}

Let $\cE$ be the event that $|V_{blue}(G)\cup V_{red}(G)|\leq c^3e^{-c}n$. By Lemma \ref{lem:expsizecomp},  $\Pr(\cE)= 1-o(n^{-2})$. In the event $\cE$, if (P2) holds then $V(G)$ spans a pair of disjoint sets $S,R'\subset [n]$  such that (i) $|S|\in [n/1.25c^3,0.3n]$, (ii) $|R'|\leq |S|+c^3e^{-c}n$, (iii) every vertex in $R'$  has a neighbor in $S$ in $G$, and (iv) every vertex in $S$ has at most $1$ neighbor in $V(G)\setminus R'$ in $G\setminus E_1$. Here we may substitute conditions (ii) and (iii) with the weaker condition (v) $|R'|= |S|+c^3e^{-cn}$, this is done for bounding $p_2$ below.
Thereafter, for $c\geq 20$ we have that if $n/1.25c^3\leq |S|\leq 0.01n$ then $c^3e^{-c}n\leq 0.17|S|$, else if $|S|\geq 0.01n$ then $c^3e^{-c}n\leq 0.01|S|$. Recall that each edge $e$ does not belong to $G\setminus E_1$ only if (i) $e\notin E(G)$ or  (ii) $e\in E(G)$ and $Y_e=1$, hence with probability at most $1-p+pp'$ independently. Thus for $c\geq 20$,  $\Pr(\text{P2})\leq p_1+p_2$ where,
\begin{align*}
p_1 &\leq \sum_{s=\frac{n}{1.25c^3}}^{0.01n} \sum_{r=0}^{1.17s}
\binom{n}{s}\binom{n}{r}(sp)^{r} ((n-r)p+1)^s  (1-p+pp')^{s(n-(s+r))}
\\& \leq \sum_{s=\frac{n}{1.25c^3}}^{0.01n}  \sum_{r=0}^{1.17s}\bfrac{en}{s}^s \bfrac{esnp}{r}^{r}  (np+1)^s e^{-0.975spn} 
\\&\leq n\sum_{s=\frac{n}{1.25c^3}}^{0.01n} \bigg[\bfrac{en}{s} \bfrac{esnp}{1.17s}^{1.17} (np+1) e^{-0.975pn}  \bigg]^s
\\&\leq n^2\bigg[(1.25ec^3)(ec)^{1.17}c e^{-0.975c}\bigg]^s 
=o(n^{-2}) 
\end{align*}
and
\begin{align*}
p_2 &\leq \sum_{s=0.01n}^{0.3n} \binom{n}{2.01s}\binom{2.01s}{s}((n-2.01s)p+1)^s  (1-p+pp')^{s(n-2.01s)}
\\&\leq \sum_{s=0.01n}^{0.3n} \bigg[\bfrac{en}{2.01s}^{2.01} 2^{2.01}  (c(1-2.01s/n)+1) e^{-(1+o(1))c(1-2.01s/n)}\bigg]^s
=o(n^{-2}).
\end{align*}
Finally,
\begin{align*}
\Pr(\text{P3})&\leq \binom{n}{0.3n} \sum_{s=0}^{0.15n} \binom{0.3n}{2s} \frac{(2s)!}{s!2^s}p^s (1-p+pp'))^{\binom{0.3n}{2}-s} 
\\& \leq 2^{-(0.3\log_2 0.3 +0.7\log_2 0.7+o(1))n} \sum_{s=0}^{0.15n} 2^{0.3n} \bfrac{2sp}{e}^s e^{-p\binom{0.3n}{2}} \leq 2^{1.2n} \sum_{s=0}^{0.15n} \bfrac{2sp}{e}^s e^{-p\binom{0.3n}{2}}
\\& \leq n\bigg[2^{4} \bfrac{0.15c}{e}^{0.5} e^{-0.15c} \bigg]^{0.3n} \leq n\bigg[2^{4} \bfrac{3}{e}^{0.5} e^{-3} \bigg]^{0.3n}  =o(n^{-2}).
\end{align*}

\end{proof}

\end{appendix}

\end{document}